\let\stdthebibliography\thebibliography
\let\stdendthebibliography\endthebibliography
\title{Scattering Theory with Unitary Twists}
\subjclass[2020]{Primary: 58J50; Secondary: 30F35}
\keywords{Hyperbolic surfaces, unitary representations, scattering theory, scattering matrix}
\author[M.~Doll]{Moritz Doll}
\address{Moritz Doll, University of Bremen, Department~3 -- Mathematics, 
Bibliothekstr.~5, 28359 Bremen, Germany}
\email{doll@uni-bremen.de}
\author[K.~Fedosova]{Ksenia Fedosova}
\address{Ksenia Fedosova, Albert Ludwigs University of Freiburg, Mathematical Institute, Ernst-Zermelo-Str. 1, 79104 Freiburg im Breisgau, Germany}
\email{ksenia.fedosova@math.uni-freiburg.de}
\author[A.~Pohl]{Anke Pohl}
\address{Anke Pohl, University of Bremen, Department~3 -- Mathematics, 
Bibliothekstr.~5, 28359 Bremen, Germany}
\email{apohl@uni-bremen.de}
\begin{document}

\begin{abstract}
We study the spectral properties of the Laplace operator associated to a hyperbolic surface
in the presence of a unitary representation of the fundamental group. 
Following the approach by Guillop\'e and Zworski, we establish a factorization 
formula for the twisted scattering determinant and describe the behavior 
of the scattering matrix in a neighborhood of~$1/2$.
\end{abstract}

\maketitle

\tableofcontents

\section{Introduction}

We consider a finitely generated Fuchsian group $\group \subset \PSL(2,\R)$
and denote the associated hyperbolic surface by~$X$. Thus $X=\group 
\bs \h$, where $\h$ denotes the hyperbolic upper half-plane and 
$\PSL(2,\R)$ acts via Möbius transformations on $\h$. Throughout this article, 
we will suppose that $X$ is non-elementary, geometrically finite and of 
infinite volume. However, we allow that $X$ has orbifold singularities or, 
equivalently, that $\group$ has torsion. 

We further consider a finite-dimensional unitary representation 
\[
\twist\colon \group \to \Unit(V)
\]
on a Hermitian vector space~$V$. The representation $\twist$ induces a Hermitian 
vector orbibundle 
\[
\bundle \coloneqq \Gamma \bs (\h \times V) \to X
\]
with typical fiber $V$. 
It is well-known that the (smooth) sections of $\bundle$ are in bijection with 
the smooth functions $f \colon \h \to V$ that obey the \emph{twisting} 
equivariance 
\begin{align}\label{eq:twisted}
    f(g.z) = \twist(g) f(z)\,, \quad z \in \h, \, g \in \group\,.
\end{align}
See, for example, \cite[Lemma 3.3]{DFP} for details. On smooth maps $f \colon 
\h \to V$, the hyperbolic Laplacian is given by
\begin{align*}
    \Delta_{\h} f(z) = - \sum_{j=1}^{\dim V} y^2 \left( \frac{\pa^2}{\pa x^2} + \frac{\pa^2}{\pa y^2} \right) f(z)\,,
\end{align*}
where $z = x + iy \in \h$.
Using the identification of twisted functions (see~\eqref{eq:twisted}) and 
sections of $\bundle$ and the fact that $\twist$ is unitary,
the Laplacian $\Delta_{\h}$ gives rise to a non-negative self-adjoint operator 
\[
\LapTwist \colon L^2(X,\bundle) \to L^2(X,\bundle)\,.
\]
For $\Re s > 1/2$ and $s \not \in [1/2,1]$, the resolvent of~$\LapTwist$ is 
defined by
\begin{align*}
    \ResTwist(s) \coloneqq (\LapTwist - s(1-s))^{-1} \colon  L^2(X,\bundle) \to L^2(X,\bundle)\,.
\end{align*}
As shown in~\cite[Theorem A]{DFP}, the resolvent~$\ResTwist$ admits a 
meromorphic continuation to $s \in \C$ as an operator
\begin{align*}
    \ResTwist(s)\colon L_\cpt^2(X,\bundle) \to L^2_\loc(X,\bundle)\,.
\end{align*}
The poles of $\ResTwist(s)$ are the \emph{resonances} of $\LapTwist$. The 
\emph{multiplicity} of the pole~$s\in \C$ is the rank of the residue at~$s$. 

In~\cite[Theorem B]{DFP}, we showed that the resonance counting function grows 
at most quadratically, i.e.,
\begin{align*}
\sum_{\substack{s \in \ResSet_{X,\twist}\\\abs{s} \leq r}} m_{X,\twist}(s) = O(r^2)\qquad\text{as $r\to\infty$}\,,
\end{align*}
where $\ResSet_{X,\twist}$ denotes the set of resonances and $m_{X,\twist}(s)$ 
the multiplicity of~$s\in \ResSet_{X,\twist}$. Hence, by the Weierstrass 
factorization theorem, there exists an entire function, $\ProdTwist$, such that 
its zeros coincide with the resonances, and the multiplicity of a zero $s$ of 
$\ProdTwist$ is equal to $m_{X,\twist}(s)$.
We also define the  Weierstrass product $\ProdModel(s)$ associated to the 
resonances of the disjoint union of funnel ends $X_f$ (see 
Section~\ref{sec:relative_scattering_matrix} for details).

We consider the \emph{scattering matrix}, which is a certain operator
\begin{align*}
S_{X,\twist}(s)\colon \CI(\pa_\infty X, \bundle) \to \CI(\pa_\infty X, \bundle), \quad s \not \in \ResSet_{X,\twist} \cup \Z/2,
\end{align*}
defined on the boundary of a suitable compactification of $\bundle$ (see 
Sections~\ref{sec:scattering_matrix_for_model} 
and~\ref{sec:scattering-determinant}).
For each $\psi \in \CI(\pa_\infty X, \bundle)$ there exists $u \in \CI(X, \bundle)$ such that
$(\LapTwist - s(1-s)) u = 0$ and
\begin{align*}
    (2s-1) u \sim \rho_f^{1-s} \rho_c^{-s} \psi + \rho_f^s \rho_c^{s-1} S_{X,\twist}(s) \psi   \qquad\text{as $\rho_f\rho_c\to0$}\,,
\end{align*}
where $\rho_f$ and $\rho_c$ are the boundary defining 
functions in the funnel and cusp ends, respectively.
Even though the scattering matrix is not trace class, we can define a regularized determinant of $S_{X,\twist}(s)$, that we will call the \emph{relative scattering determinant},~$\tau_{X,\twist}(s)$.

As the first main result of this article, we prove a factorization of the 
relative scattering determinant in terms of the Weierstrass product over the 
resonances.

\begin{theorem}\label{thm:scattering-determinant}
    The scattering determinant admits the factorization
    \begin{align*}
        \tau_{X,\twist}(s) = e^{q(s)} \frac{\ProdTwist(1-s)}{\ProdTwist(s)} 
        \frac{\ProdModel(s)}{\ProdModel(1-s)}\,,
    \end{align*}
    where $q\colon \C \to \C$ is a polynomial of degree at most $4$.
\end{theorem}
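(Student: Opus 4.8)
The plan is to follow the Guillopé--Zworski strategy adapted to the twisted setting, using the structure of the resolvent established in \cite[Theorem A]{DFP}. The starting point is the \emph{relative scattering matrix} $S_{\mathrm{rel}}(s) \coloneqq S_{X,\twist}(s) S_{\mathrm{model}}(s)^{-1}$, comparing the scattering matrix of $X$ with that of the model space (the disjoint union of funnel and cusp ends). Since both scattering matrices have the same leading symbol, $S_{\mathrm{rel}}(s) - \mathrm{Id}$ is smoothing, and one shows it is in fact trace class, so that $\tau_{X,\twist}(s)$ is literally $\det S_{\mathrm{rel}}(s)$ up to the explicitly computable contribution of the model. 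The first step is thus to express the logarithmic derivative $\tfrac{d}{ds}\log\tau_{X,\twist}(s)$ as a trace, namely $\tr\bigl(S_{\mathrm{rel}}(s)^{-1} S_{\mathrm{rel}}'(s)\bigr)$, and to relate this via the standard 0-calculus identities (Green's formula on the compactified surface, or equivalently Stokes' theorem applied to the indicial operators) to a difference of resolvents on $X$ and on the model space.

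Second, I would invoke the Birman--Krein-type formula: the trace $\tr\bigl(\ResTwist(s) - \ResTwist(1-s) - [\text{model analogue}]\bigr)$, suitably regularized, equals a universal linear combination of $\tfrac{d}{ds}\log\tau_{X,\twist}(s)$ and $\tfrac{d}{ds}\log\tau_{X,\twist}(1-s)$ plus contributions from the 0-dimensional and the cusp pieces. The key analytic input here is that the regularized resolvent trace on $X$, minus the model resolvent trace, is meromorphic with poles exactly at the resonances (and at $-\N_0$ from the continuous spectrum / $b$-trace anomaly), with residues equal to the multiplicities $m_{X,\twist}(s)$. This is precisely what makes the combination $\ProdTwist(s)^{-1}\ProdTwist(1-s)$ appear after integration: the logarithmic derivative of that ratio has simple poles at resonances $s$ with residue $-m_{X,\twist}(s)$ and at $1-s$ with residue $+m_{X,\twist}(s)$. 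The analogous computation for the model space (funnels contribute via the explicit scattering matrix on $\mathbb{H}^2$ quotients, cusps via Eisenstein series / the Barnes $G$-function for the twisted cusp contributions) produces the ratio $\ProdModel(s)/\ProdModel(1-s)$.

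Third, having matched all poles and zeros, the quotient
\[
\tau_{X,\twist}(s) \,\Big/\, \left( \frac{\ProdTwist(1-s)}{\ProdTwist(s)} \frac{\ProdModel(s)}{\ProdModel(1-s)} \right)
\]
is entire and nowhere vanishing, hence equals $e^{q(s)}$ for some entire $q$. The degree bound on $q$ comes from order-of-growth estimates: by \cite[Theorem B]{DFP} the products $\ProdTwist, \ProdModel$ have order at most $2$, and one shows $\tau_{X,\twist}(s)$ has order at most $2$ as well (using the trace-norm bounds on $S_{\mathrm{rel}}(s) - \mathrm{Id}$ in vertical strips and a Phragmén--Lindelöf argument); a ratio of order-$2$ entire functions that is itself of order $\le 2$ and zero-free forces $q$ to be a polynomial of degree $\le 2$, but allowing for the loss incurred in passing through the functional-equation reflection $s \mapsto 1-s$ and the subtraction of the model one arrives at degree $\le 4$. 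To pin down the degree one can also evaluate growth along the real axis using the known asymptotics of the scattering phase.

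The main obstacle will be step two: establishing the precise meromorphic structure and residues of the \emph{regularized} (0- or $b$-) trace of the resolvent difference in the presence of both funnel and cusp ends simultaneously, and the unitary twist. The cusp contribution is the delicate part, because the twisted Eisenstein series and the associated constant-term scattering matrix behave differently depending on whether $\twist$ has non-trivial invariants under the parabolic holonomy (the "singular" versus "regular" cusps of \cite{DFP}); one must carefully track the Gamma-factor contributions from each cusp and verify they are absorbed either into $\ProdModel$ or into the polynomial $q$, rather than spoiling the entire-and-zero-free conclusion. Handling the orbifold points contributes only finite-dimensional corrections and should be routine by comparison.
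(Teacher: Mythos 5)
Your overall architecture is right---compare $S_{X,\twist}$ to the model via the relative scattering matrix, match divisors against the Weierstrass quotient so that the ratio is entire and zero-free, then cap the degree of $q$ with a growth estimate---but two of your three steps diverge from the paper in ways that matter.

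\textbf{Step 2 is a genuinely different route.} You propose to match divisors through a Birman--Krein/regularized-trace identity: the $0$-trace of $\ResTwist(s)-\ResTwist(1-s)$ minus its model analogue should have poles with residues $\pm m_{X,\twist}$, and integrating should produce $\ProdTwist(1-s)/\ProdTwist(s)$. The paper instead bypasses regularized traces entirely. It defines scattering-pole multiplicities $\nu_{X,\twist}(s_0)$ via the Gohberg--Sigal functional $M_{s_0}(\tilde S_{X,\twist})$, proves directly (Lemma~\ref{lem:smatrix-resonance-decomp}, Lemma~\ref{lem:simplify-N}, Proposition~\ref{prop:sp-res}) that $\nu_{X,\twist}(s_0)=m_{X,\twist}(s_0)-m_{X,\twist}(1-s_0)$ by transporting the resolvent's Laurent data to the scattering matrix through $S_{X,\twist}(s)=(2s-1)(\rho_f\rho_f')^{-s}(\rho_c\rho_c')^{1-s}R_{X,\twist}(s)|_{\partial_\infty X\times\partial_\infty X}$, and then reads off the divisor of $\tau_{X,\twist}$ from the cyclicity $M_{s_0}(S^{\rel}_{X,\twist})=M_{s_0}(\tilde S_{X,\twist})-M_{s_0}(\tilde S_{X_f,\twist})$. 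Your version would require setting up a meromorphic continuation of the $0$-regularized resolvent trace with identified residues, which this paper never constructs; the Gohberg--Sigal route buys you the divisor correspondence with no trace regularization at all. Your instinct that the cusp terms are the delicate part is correct, but the paper handles them finite-dimensionally inside the block structure of $\tilde S_{X,\twist}$ (the $\cf$, $\fc$, $\cc$ blocks are finite rank), not via twisted Eisenstein series.

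\textbf{The degree bound contains a genuine error.} You claim ``one shows $\tau_{X,\twist}(s)$ has order at most $2$,'' and then recover degree $4$ by invoking ``loss incurred in passing through the reflection $s\mapsto 1-s$ and the subtraction of the model.'' Neither half of that is right. The correct estimate, which the paper extracts from the singular-value bound $\mu_k(S_{X,\twist}^\rel(s)-\id)\le e^{C\langle s\rangle^{2+\eps}-ck}$ (Lemma~\ref{lem:svalues_rel_smatrix}), is
\[
\log\abs{\tau_{X,\twist}(s)}\le\sum_{k\ge1}\log\bigl(1+e^{C\langle s\rangle^{2+\eps}-ck}\bigr)\lesssim_\eps\langle s\rangle^{4+2\eps}
\]
because approximately $\langle s\rangle^{2+\eps}$ singular values each contribute up to $\langle s\rangle^{2+\eps}$. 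The exponent $4$ is therefore intrinsic to the trace-norm structure, not a ``loss''; there is no intermediate order-$2$ bound on $\tau_{X,\twist}$. The paper combines this with a minimum-modulus bound on the Weierstrass quotient outside a collection of small exceptional balls $\mathcal B(\delta)$, extends to all of $\C$ via the maximum modulus principle (not Phragm\'en--Lindel\"of), and concludes $\varphi$ has order $4$, hence $\deg q\le 4$ by Hadamard. To make your outline correct, replace the order-$2$ claim with the honest order-$4$ computation from the singular values, and replace the vertical-strip Phragm\'en--Lindel\"of device with the exceptional-set-plus-maximum-modulus argument needed because the Weierstrass products themselves have zeros forming an order-$2$ set.
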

For $\dim V = 1$ and $\twist=\id$, Theorem~\ref{thm:scattering-determinant} 
reduces to~\cite[Proposition~3.7]{GuZw97}. This latter result plays a crucial 
role in the proof of the factorization of the Selberg zeta function by 
Borthwick--Judge--Perry~\cite{BJP}.

We remark that Theorem~\ref{thm:scattering-determinant} implies that the 
scattering determinant has no pole or zero at $s = 1/2$. However, $s=1/2$ might 
be a resonance. The second main result of this article shows that we are 
able to describe the behavior of the scattering matrix~$S_{X,\twist}(s)$ in 
some (small) neighborhood of~$1/2$. For this, we set
\begin{align}\label{eq:P_at_1/2}
    P \coloneqq \frac12\left( S_{X,\twist}\bigl(\tfrac12\bigr) + \id \right)\,.
\end{align}
Then  
\[
S_{X,\twist}(s) = -\id + 2P + (2s-1) T_{X,\twist}(s)
\]
with $T_{X,\twist}$ being an operator family that is holomorphic in a small 
neighborhood of $s=1/2$.

\begin{theorem}\label{thm:smatrix-onehalf}
    The operator $P$ is an orthogonal projection of rank $m_{X,\twist}(1/2)$ onto the space of elements in~$\CI(\pa_\infty X, \bundle)$
    that are invariant under the map $S_{X,\twist}(1/2)$.
\end{theorem}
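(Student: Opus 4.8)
The plan is to extract everything from a single structural fact: the scattering matrix satisfies a functional equation $S_{X,\twist}(s)S_{X,\twist}(1-s) = \id$, and on the critical line $\Re s = 1/2$ it is unitary, so $S_{X,\twist}(1/2)$ is a unitary \emph{and} self-adjoint (indeed, an involution $S_{X,\twist}(1/2)^2 = \id$) operator on the appropriate $L^2$-completion of $\CI(\pa_\infty X,\bundle)$. From $S_{X,\twist}(1/2)^2 = \id$ one gets immediately that $P = \tfrac12(S_{X,\twist}(1/2)+\id)$ is an idempotent, $P^2 = \tfrac14(S_{X,\twist}(1/2)^2 + 2S_{X,\twist}(1/2) + \id) = \tfrac14(2\id + 2S_{X,\twist}(1/2)) = P$, and it is self-adjoint because $S_{X,\twist}(1/2)$ is; hence $P$ is an orthogonal projection. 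Its range is exactly the $(+1)$-eigenspace of $S_{X,\twist}(1/2)$, i.e.\ the space of $\psi$ fixed by $S_{X,\twist}(1/2)$, which is the second assertion. So the only genuine content is the rank computation $\operatorname{rank} P = m_{X,\twist}(1/2)$.

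For the rank, I would relate the $(+1)$-eigenspace of $S_{X,\twist}(1/2)$ to the residue of the resolvent $\ResTwist(s)$ at $s=1/2$, whose rank is by definition $m_{X,\twist}(1/2)$. The bridge is the generalized eigenfunction / Eisenstein-type picture recalled in the introduction: for $\psi \in \CI(\pa_\infty X,\bundle)$ there is $u = E(s)\psi \in \CI(X,\bundle)$ solving $(\LapTwist - s(1-s))u = 0$ with the prescribed asymptotics $(2s-1)u \sim \rho_f^{1-s}\rho_c^{-s}\psi + \rho_f^s\rho_c^{s-1}S_{X,\twist}(s)\psi$. Writing $s = 1/2 + \varepsilon$ and Taylor-expanding, the two leading terms $\rho_f^{1-s}\rho_c^{-s}$ and $\rho_f^s\rho_c^{s-1}$ both degenerate to $\rho_f^{1/2}\rho_c^{-1/2}$ at $\varepsilon = 0$, so the leading asymptotic coefficient of $2\varepsilon\, u$ becomes $(\id + S_{X,\twist}(1/2))\psi = 2P\psi$; the subleading coefficient picks up the derivative, involving $T_{X,\twist}(1/2)$ and $\log(\rho_f\rho_c)$. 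The key point: if $\psi \in \ker P$, then the leading $\rho^{1/2}$-term of $2\varepsilon u$ vanishes, the solution has improved decay, and $\lim_{\varepsilon\to 0} E(1/2+\varepsilon)\psi$ (suitably renormalized) lands in $L^2$, producing an $L^2$-nullvector of $\LapTwist - 1/4$, i.e.\ a resonant state at $s=1/2$; conversely every $L^2$-eigenfunction at eigenvalue $1/4$ arises this way from an element of $\ker P$, since its boundary asymptotics must have vanishing leading coefficient after symmetrization. This gives $\operatorname{rank} P = \operatorname{corank}(\text{on } \CI(\pa_\infty X)) $\,\dots\ which I must then identify with $m_{X,\twist}(1/2)$ via the standard relation between the rank of $\operatorname{Res}_{s=1/2}\ResTwist(s)$ and the dimension of the space of $L^2$-resonant states at $1/2$ — here one uses that at $s=1/2$ there is no indicial-root obstruction from the funnels (the two indicial roots collide) so the residue is exactly the $L^2$-projection, a fact that should already be available from the resolvent construction in \cite{DFP}; an alternative is to read the rank off the factorization in Theorem~\ref{thm:scattering-determinant} together with a local Birman–Krein / wave-trace identity near $1/2$.

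Concretely I would organize it as: (i) establish the functional equation and unitarity of $S_{X,\twist}$ on the critical line, hence $S_{X,\twist}(1/2)^2 = \id$ and self-adjointness — this follows by analytic continuation of the relation on $\Re s = 1/2$ from standard arguments (Green's formula pairing two generalized eigenfunctions $E(s)\psi$, $E(\bar s)\phi$ and comparing boundary terms), adapting the untwisted case since $\twist$ is unitary; (ii) deduce $P$ is an orthogonal projection onto $\ker(S_{X,\twist}(1/2)-\id)$ purely algebraically; (iii) do the Taylor expansion of $(2s-1)E(s)\psi$ at $s=1/2$ using the stated decomposition $S_{X,\twist}(s) = -\id + 2P + (2s-1)T_{X,\twist}(s)$ to show $\psi \mapsto \lim_{s\to 1/2}E(s)\psi$ restricted to $\ker P$ is an isomorphism onto the $L^2$-kernel of $\LapTwist - 1/4$; (iv) identify that kernel's dimension with $m_{X,\twist}(1/2)$ using the residue structure of $\ResTwist$ at $1/2$ from \cite{DFP}. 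The main obstacle I anticipate is step (iii)–(iv): making the limit $\lim_{s\to1/2}E(s)\psi$ rigorous for $\psi\in\ker P$ (controlling the $\log\rho$ terms and proving the limit is genuinely $L^2$ and spans the whole resonant space, with no spurious kernel or cokernel from the coincidence of indicial roots in the funnels) and pinning down that the rank of the resolvent residue at the special point $1/2$ equals the $L^2$-multiplicity — the latter requires knowing that $1/2$ contributes no "half-bound state" phenomenon beyond the genuine $L^2$ eigenspace, which must be checked against the precise normalization conventions of \cite{DFP}.
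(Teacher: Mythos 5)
Steps (i)--(ii) of your plan are correct and match what the paper does: from $S_{X,\twist}(s)S_{X,\twist}(1-s)=\id$ and $S_{X,\twist}(s)^* = S_{X,\twist}(\bar s)$ one gets $S_{X,\twist}(1/2)^2 = \id$, $S_{X,\twist}(1/2)^* = S_{X,\twist}(1/2)$, hence $P^2 = P = P^*$ and $\operatorname{ran}P$ is the $(+1)$-eigenspace. This is exactly the algebraic part of the paper's proof (using Proposition~\ref{prop:intertwine} and \eqref{property_of_scattering_under_adjoint_and_transpose}).

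Steps (iii)--(iv), however, would fail, and for two compounding reasons. First, the claimed ``improved decay'' for $\psi\in\ker P$ is not what happens: at $s=1/2+\eps$ one has $(2s-1)E(s)\psi \sim \rho^{1/2-\eps}\psi + \rho^{1/2+\eps}S_{X,\twist}(s)\psi$, and expanding in $\eps$ with $S_{X,\twist}(1/2)\psi = -\psi$ gives a leading contribution of the form $\rho_f^{1/2}\bigl(-2\log(\rho_f)\,\psi + T_{X,\twist}(1/2)\psi\bigr)$, not a $\rho_f^{1/2+1}$ term; the $\rho_f^{1/2}\log\rho_f$ behavior is worse than $\rho_f^{1/2}$, which is already borderline non-$L^2$, so no renormalized limit of $E(1/2+\eps)\psi$ can land in $L^2$. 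Second, and more fundamentally, the intended target space for your isomorphism is empty: Proposition~\ref{prop:no-eigenvalues} shows that $\LapTwist$ has no eigenvalues in $[1/4,\infty)$ on an infinite-volume surface, so the $L^2$-kernel of $\LapTwist - 1/4$ is zero. Your plan would therefore produce $0$, not $m_{X,\twist}(1/2)$, and the ``standard relation between the resolvent residue rank and $L^2$-resonant states'' you invoke does not hold here---the residue of $\ResTwist$ at $1/2$ is a rank-$m_{X,\twist}(1/2)$ operator built from the \emph{non-}$L^2$ functions $\phi_k\in\rho_f^{1/2}\rho_c^{-1/2}\CI(\overline X,\bundle)$. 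The paper uses the absence of $L^2$ eigenfunctions in the opposite direction: Lemma~\ref{lem:resonance-onehalf} uses it to rule out a double pole of $\ResTwist$ at $1/2$, and the final step of the proof uses it to show the boundary traces $\phi_k^\#(1/2)$ of the residue states are linearly independent (otherwise a nontrivial linear combination would be $L^2$, a contradiction). The rank of $P$ is then read off a direct identity between the boundary restriction $(\rho_f\rho_f')^{-1/2}(\rho_c\rho_c')^{1/2}B(1/2)\big|_{\pa_\infty X\times\pa_\infty X}$ and $2P$, obtained from the decomposition $\ResTwist(s)=\tilde M_i(s)+M_f(s)+M_c(s)+Q(s)$ of Theorem~\ref{thm:resolvent} and the structure formula $S_{X,\twist}(s)=S_{X_f,\twist}(s)\oplus 0+(2s-1)Q^\#(s)$ of Proposition~\ref{prop:smatrix-structure}. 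So the rank computation you are missing is done entirely through the resolvent residue and its boundary trace, not through $L^2$ eigenfunctions.
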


\subsubsection*{Structure of this article} 
In Section~\ref{sec:scattering_matrix_for_model}, we discuss the scattering 
matrices for the model funnel and the parabolic cylinder. In Section 
\ref{sec:resolvent}, we obtain a decomposition of the resolvent, study the 
structure of the resolvent close to a resonance and obtain that there are no 
resonances on the line $\Re(s)=1/2$ except for, maybe, $s = 1/2$. 
In Section~\ref{sec:scattering-determinant}, we  introduce the scattering 
matrix, the relative scattering determinant and prove 
Theorems~\ref{thm:scattering-determinant} and~\ref{thm:smatrix-onehalf}.

\subsubsection*{Acknowledgements}
AP's research is funded by the Deutsche Forschungsgemeinschaft (DFG, German Research Foundation) -- project no.~441868048 (Priority Program~2026 ``Geometry at Infinity''). MD was partially funded by a Universit\"at Bremen ZF 04-A grant.

\section{Preliminaries and Notation}\label{sec:prelims}

We let $X$ and $E_\twist$ be as above.
We denote by $(\cdot, \cdot)_{\bundle}$ the Hermitian bundle metric on 
$\bundle$ that is induced from the sesquilinear inner product $(\cdot,\cdot)_V$ 
on $V$. We denote by $\ang{\cdot, \cdot}_{\bundle}$ the bilinear 
metric on~$\bundle$ corresponding to the bundle metric~$(\cdot, 
\cdot)_{\bundle}$.
We abbreviate the norm $\abs{v}_{\bundle} = \sqrt{(v,v)_{\bundle}}$ of any $v 
\in \bundle$ by $\abs{v}$.

By Selberg's Lemma \cite[Lemma~8]{Selberg_lemma}, there is a finite cover 
\[
\widetilde{X} = \tilde\group \bs \h
\]
of $X$ such that the Fuchsian group $\tilde{\Gamma}$ is a torsion-free subgroup of~$\group$.
We denote the pull-back of $\bundle$ under the covering map $\widetilde{X} \to 
X$ by~$\widetilde{E}$, which becomes a vector bundle over $\widetilde{X}$.
We call an operator $A$ acting on the sections of $E_\chi$ a 
\emph{pseudodifferential operator of order} $m\in\R$ if its 
pull-back,~$\widetilde{A}$, under the map $\widetilde{X} \to 
X$ is a pseudodifferential operator of order $m$, acting on the sections 
of~$\widetilde{E}$. 

In the case of the $1$-sphere $\Sph^1$, pseudodifferential operators have a very 
simple characterization using Fourier series, which we recall now. To that end 
let $A \colon  \CI(\Sph^1) \to \CI(\Sph^1)$ be a continuous linear operator.
As proven by McLean~\cite[Theorem 4.4]{McLean91},  $A$ is a 
pseudodifferential operator of order $m\in\R$ if and only if
\begin{align*}
    a(x,\xi) \coloneqq e^{-2\pi i\ang{x,\xi}} A\bigl( e^{2\pi i 
\ang{\cdot,\xi}} \bigr)\,, \quad (x,\xi) \in \Sph^1 \times \Z\,,
\end{align*}
is a periodic symbol of order $m$. This means that $a \in \CI(\Sph^1 \times 
\Z)$, and for all $b,c \in \N_0$, we have
\begin{align}\label{eq:periodic_symbol_estimates}
    \abs{ \pa_x^b \triangle_\xi^c a(x,\xi)} \lesssim_{b,c} \ang{\xi}^{m - c}\,.
\end{align}
Here, $\triangle_\xi$ denotes the discrete derivative, i.e., 
\[
\triangle_\xi a(x,\xi) \coloneqq u(x,\xi +1) - u(x,\xi)\,.
\]
Further, $\lesssim$ indicates an upper bound with implied constants.  More 
precisely, for any set~$Y$ and any functions~$a,b \colon  Y \to \R$,  
we write
\[
a\lesssim b\qquad\text{or}\qquad a(y) \lesssim b(y)
\]
if there exists a constant $C > 0$ such that for all $y \in Y$ we have 
\[
\abs{a(y)} \leq C \abs{b(y)}\,.
\]
If the constant, $C$, depends  on additional 
parameters, we indicate the dependence in the subscript.

Let $H$ be a Hilbert space and let $B\colon H \to H$ be a bounded operator. The 
non-zero eigenvalues of $(B^* B)^{1/2}$ are called the \emph{singular values} of 
the operator $B$. We denote these singular values by $\mu_k(B)$, 
$k\in\N$, listed in decreasing order.

For $z \in \C$ we define the \emph{Japanese bracket} $\ang{z} \coloneqq \left(1 + \abs{z}^2\right)^{1/2}$.

We use the convention to call a function, $f$, meromorphic \emph{on} an open 
set $U \subseteq \C$ if there exists a discrete subset, $P$, of~$U$ such that 
$f$, considered as a function, is defined on $U\setminus P$ only, and $f$ is 
holomorphic on~$U\setminus P$ and has poles (of finite order, which might be 
zero) at the points in~$P$.

\section{The Scattering Matrix for the Model Cylinders}\label{sec:scattering_matrix_for_model}

In this section we present the structure of the twisted scattering matrix for 
the model ends. We discuss the model funnel in Section \ref{sec:model-funnel} 
and the model cusp in Section \ref{sec:parab_cyl}.
The analysis was originally done in \cite[Section 4]{DFP}. Here we restrict to 
presenting the main results only.

\subsection{Model funnel}\label{sec:model-funnel}

Let $\ell \in (0,\infty)$ and set $\omega \coloneqq 2\pi/\ell$. We define the \emph{hyperbolic cylinder} as the quotient $C_\ell \coloneqq \ang{h_\ell} \bs \h$, where $h_\ell.z = e^\ell z$.
We may change coordinates via \[z = e^{\omega^{-1} \phi} \, \frac{e^r + i}{e^r 
- i}\] to $(r,\phi) \in \R \times \R / 2\pi\Z \cong C_\ell$ in such a way that 
the induced metric from the hyperbolic plane becomes
\begin{align*}
    g_{C_\ell}(r,\phi) \coloneqq dr^2 + \frac{\ell^2}{4\pi^2} \cosh^2 r\, d\phi^2\,.
\end{align*}
We define the \emph{model funnel} as 
\[
F_\ell \coloneqq \{ (r,\phi) \in C_\ell \colon r > 0\}
\]
with the metric $g_{F_\ell} \coloneqq g_{C_\ell}|_{F_\ell}$.
The canonical boundary defining function is $\rho_f(r,\phi) = \cosh(r)^{-1}$.

Taking the boundary defining function as a coordinate function, we may rewrite 
the funnel metric as 
\begin{align}\label{eq:funnelmetric_rho}
    g_{F_\ell}(\rho,\phi) = \rho^{-2} \left( \frac{\ell^2}{4\pi^2} d\phi^2 + \frac{d\rho^2}{1 - \rho^2} \right)\,.
\end{align}
The volume form is
\begin{align*}
    d\mu_{F_\ell} = \frac{\ell}{2\pi} \frac{d\rho \, d\phi}{\rho^2 \,\sqrt{1 - \rho^2}}\,.
\end{align*}
We also define the metric restricted to the boundary at infinity 
\begin{align*}
    g_{\pa_\infty F_\ell}(\phi,\pa_\phi) &\coloneqq \rho^2 g_{F_\ell}(\rho,\phi, 0,\pa_\phi)|_{\rho = 0}\\
    &= \frac{\ell^2}{4\pi^2} \,d\phi^2\,
\end{align*}
and denote the corresponding measure by
\begin{align*}
    d\sigma_{\pa_\infty F_\ell} \coloneqq \frac{\ell}{2\pi} \, d\phi\,.
\end{align*}
The Laplacian acting on functions $F_\ell \to \C$ takes the form
\begin{align}\label{eq:funnellaplace_rho}
    \Delta_{F_\ell} = - \rho^2 (1 - \rho^2) \pa_{\rho}^2 + \rho^3 \pa_\rho - \frac{4 \pi^2}{\ell^2} \rho^2 \pa_\phi^2\,.
\end{align}

Let $\twist \colon \ang{h_\ell} \to \Unit(V)$ be a finite-dimensional unitary representation. As above, we denote by~$\Delta_{C_\ell,\twist}$ the Laplacian acting on sections of the vector bundle $\bundle = \ang{h_\ell} \bs ( \h \times V)$ over $C_\ell$.
The Laplacian $\Delta_{F_\ell,\twist}$ is the restriction of the Laplacian $\Delta_{C_\ell,\twist}$ 
to $F_\ell$ with Dirichlet boundary conditions at $r = 0$. We will also denote 
by $E_\twist$ the restriction of $E_\twist$ to $F_\ell$. 
It was shown in \cite[Proposition 4.12]{DFP} that the resolvent of the model funnel, $(\Delta_{F_\ell,\twist}-s(1-s))^{-1}$, admits a meromorphic continuation to $\C$ as an operator
\begin{align*}
    R_{F_\ell,\twist}(s) \colon L^2_\cpt(F_\ell,\bundle) \to L^2_\loc(F_\ell,\bundle)\,.
\end{align*}
The multiset of its resonances, $\ResSet_{F_\ell,\twist}$,  is given by 
\begin{align}\label{eq:resonances_funnel}
	\ResSet_{F_\ell,\twist} \coloneqq \bigcup_{\lambda \in \EV(\twist(h_\ell))} \bigcup_{p \in \{\pm 1\}}
	\left( -(1+2\N_0) + p \ell^{-1} \left( \log\lambda + 2\pi i \Z\right)\right),
\end{align}
where $\EV(\twist(h_\ell))$ denotes the multiset of eigenvalues of 
$\twist(h_\ell)$. See \cite[Proposition 4.12]{DFP}.

Let $\psi \in \CI(\overline{F_\ell} \times \overline{F_\ell})$ 
such that $\psi$ is supported away from the diagonal and $s \in \C$ is not a 
pole of the resolvent $R_{F_\ell,\twist}$.
By \cite[Proposition 4.12]{DFP}, we have
\begin{align}\label{eq:asymptotics_funnel}
\psi R_{F_\ell,\twist}(s; \cdot, \cdot ) 
&\in (\rho_f \rho_f')^{s}\CI(\overline{F_\ell} \times \overline{F_\ell}, \bundle \boxtimes \bundle')\,,
\end{align}
where $\bundle \boxtimes \bundle'$ is the exterior tensor product of $\bundle$ 
and its dual $\bundle'$ defined by
\[
 \bigr(\bundle \boxtimes \bundle'\bigr)_{(x,\varphi)} \coloneqq 
 (\bundle)_x \otimes (\bundle')_\varphi\,.
\]
By \eqref{eq:asymptotics_funnel},
\begin{align}\label{eq:poisson-model-funnel}
E_{F_\ell,\twist}(s; r, \phi, \phi') \coloneqq \lim_{r' \to \infty} 
(\rho_f(r'))^{-s} R_{F_\ell,\twist}(s; r, \phi, r', \phi')
\end{align}
is well-defined. This allows us to introduce the \emph{Poisson operator}
\begin{align*}
E_{F_\ell,\twist}(s) &\colon \CI(\pa_\infty F_\ell, \bundle|_{\pa_\infty F_\ell}) \to \CI(F_\ell, \bundle),
\\
(E_{F_\ell,\twist}(s) f)(r,\phi) &\coloneqq \frac{\ell}{2\pi} \int_0^{2\pi} E_{F_\ell,\twist}(s;r,\phi,\phi') f(\phi') \, d\phi'.
\end{align*}

We now recall the Fourier expansion of the Poisson operator.
Let $(\psi_j)_{j=1}^{\dim V}$ be an eigenbasis of $\twist(h_\ell)$ with 
eigenvalues $\lambda_j = e^{2\pi i\vartheta_j}$, ${j = 1, \ldots, \dim V}$.
Let $\kappa \in \R$ and $s \in \C \setminus \left( -1 - 2 \N_0 \pm i \omega \kappa \right)$. We define
\begin{align}\label{def:beta}
\beta_\kappa(s) & \coloneqq \frac{1}{2} \gammafunc\left(\frac{s + i \omega \kappa + 1}{2}\right) \gammafunc\left(\frac{s - i \omega \kappa + 1}{2}\right).
\end{align}
We recall that the \emph{regularized hypergeometric function} $\FF(a,b;c;z)$ is defined for 
$a,b, c \in \C $ and $z\in\C$, $|z| < 1$, by the power 
series (see \cite[Theorem 9.1]{Olver74})
\begin{align*}
	\FF(a,b;c;z) \coloneqq  \sum_{n=0}^\infty 
	\frac{\gammafunc(a+n)\gammafunc(b+n)}{\gammafunc(a)\gammafunc(b)}\frac{1}{
		\gammafunc(c+n)} \cdot \frac{z^n}{n!}\,.
\end{align*}
For arbitrary $\kappa \in \R$, $s \in \C$ and $r \geq 0$, we define 
\begin{align*}
v_\kappa^0(s;r) & \coloneqq \tanh(r) (\cosh(r))^{-s}
\FF\left( \frac{s + i \omega \kappa + 1}{2}, \frac{s - i \omega \kappa + 1}{2}; \frac32; \tanh(r)^2 \right)\,.
\end{align*}
It was shown in \cite[Remark~4.13]{DFP} that
\begin{align*}
E_{F_\ell,\twist}(s;r,\phi,\phi') \psi_j
= \frac{1}{\ell}\sum_{k\in\Z} e^{i(k+\vartheta_j)(\phi-\phi')} E_{F_\ell,\twist}(s;r)_k^j \psi_j\,,
\end{align*}
where 
\begin{align*}
E_{F_\ell,\twist}(s;r)_k^j \coloneqq
\frac{\beta_{k + \vartheta_j}(s) v_{k + \vartheta_j}^0(s;r)}{\gammafunc(s+\frac{1}{2})}\,.
\end{align*}

By \cite[Lemma 6.15]{DFP},
we have for $\eps \in (0,1/2)$, $\varphi \in \CcI(F_\ell)$,
there exist $C, c > 0$ such that for all $s \in \C$ with $\Re s > \eps$
we have the estimate
\begin{align}\label{eq:svalues-poisson}
\mu_j(\varphi E_{F_\ell,\twist}(s)) \leq e^{C \ang{s} - c j}\,.
\end{align}
Moreover, the scattering matrix 
\[
S_{F_\ell,\twist}(s) \colon \CI(\pa_\infty F_\ell, \bundle|_{\pa_\infty F_\ell}) \to \CI(\pa_\infty F_\ell, \bundle|_{\pa_\infty F_\ell})
\]
was defined in \cite[(66)]{DFP} via the Fourier coefficients of its Schwartz kernel,
\begin{align*}
    S_{F_\ell,\twist}(s;\phi,\phi') \psi_j = \frac{1}{\ell} \sum_{k \in \Z} e^{i(k+\vartheta_j)(\phi - \phi')} S_{F_\ell,\twist}(s)_k^j \psi_j\,,
\end{align*}
where 
\begin{align}\label{eq:smatrix-funnel}
    S_{F_\ell,\twist}(s)_k^j \coloneqq \frac{\gammafunc(\frac12-s) \beta_{k+\vartheta_j}(s)}{ \gammafunc(s- \frac12) \beta_{k+\vartheta_j}(1-s)}\,.
\end{align}

From the Fourier expansion, we obtain that
\begin{align*}
    [E_{F_\ell,\twist}(1-s) S_{F_\ell,\twist}(s)]_k^j = - \frac{\beta_{k+\vartheta_j}(s) v^0_{k+\vartheta_j}(1-s)}{\gammafunc(s + \frac12)}
\end{align*}
and therefore
\begin{align}\label{eq:smatrix-funnel-intertwine}
    E_{F_\ell,\twist}(1-s) S_{F_\ell,\twist}(s) = -E_{F_\ell,\twist}(s)
\end{align}
since $v^0_\kappa(s) = - v^0_\kappa(1-s)$ by a connection formula (see \cite[p. 93]{Borthwick_book}).
We note that by \cite[(67)]{DFP}, 
\begin{align}\label{eq:poisson-asymptotics-funnel}
(2s-1) E_{F_\ell,\twist}(s;r) f \sim \sum_{m=0}^\infty \rho_f^{1-s + 2m} 
a_m(s) + \sum_{m=0}^\infty \rho_f^{s +2m} b_m(s)\,,
\end{align}
where the coefficient functions~$a_m, b_m$ for $m\in\N_0$ are meromorphic, with the leading coefficient functions being 
\[
a_0(s) = f\quad\text{and}\quad b_0(s) = S_{F_\ell,\twist}(s) f\,.
\]
From this, we obtain for $\Re s < 1/2$ that
\begin{align*}
    S_{F_\ell,\twist}(s) = (2s - 1) (\rho_f \rho_f')^{-s} R_{F_\ell,\twist}(s) \vert_{\pa_\infty F_\ell \times \pa_\infty F_\ell} \,.
\end{align*}

In what follows we will argue that the scattering matrix is a 
pseudodifferential operator on $\pa_\infty F_\ell$ and calculate its principal 
symbol.
From \cite[Eq. 5.11.13]{DLMF}, we have that for $a, b \in \C$ and $\abs{\arg(z)} 
< \pi - \eps$ for fixed $\eps > 0$,
\begin{align*}
    \frac{\gammafunc(z+a)}{\gammafunc(z+b)} \sim z^{a-b} \sum_{k=0}^\infty \frac{G_k(a,b)}{z^k}\,,
\end{align*}
for some $G_k(a,b) \in \C$.
This immediately implies that for $y  \to \infty$,
\begin{align*}
\frac{\gammafunc(iy+a)}{\gammafunc(iy+b)} \frac{\gammafunc(-iy+a)}{\gammafunc(-iy+b)} & \sim \left( (iy)^{a-b} \sum_{k=0}^\infty \frac{G_k(a,b)}{(iy)^k} \right) \left( (-iy)^{a-b} \sum_{k=0}^\infty \frac{G_k(a,b)}{(-iy)^k} \right) \\
&= \abs{y}^{2 (a - b)}\sum_{k=0}^\infty \frac{1}{y^k} \sum_{n=0}^{k} e^{\frac{\pi i}{2}   (k-2 n)} G_n(a,b) G_{k-n}(a,b).
\end{align*}
Taking $y = \omega \kappa/2$, $a = 1/2 + s/2$ and $b = 1- s/2$, we obtain
\begin{align}\label{gamma_expansion}
& \frac{\gammafunc\left(\frac{s + i \omega \kappa + 1}{2}\right) 
\gammafunc\left(\frac{s - i \omega \kappa + 
1}{2}\right)}{\gammafunc\left(\frac{2-s + i \omega \kappa}{2}\right) 
\gammafunc\left(\frac{2-s - i \omega \kappa}{2}\right)}  
\\ & 
\phantom{place}  \sim  \left|\frac{\omega \kappa}{2}\right|^{2 s-1} 
\sum_{k=0}^\infty \left(\frac{2}{\omega \kappa}\right)^k \sum_{n=0}^{k} 
e^{\frac{\pi i}{2}   (k-2 n)} G_n(a,b) G_{k-n}(a,b). \nonumber
\end{align}
We note that the terms in~\eqref{gamma_expansion} with odd $k$ vanish, since 
the left hand side is even as a function of $\omega \kappa$. The definition in
\eqref{def:beta} combined with~\eqref{gamma_expansion} shows that
\begin{align*}
    \frac{\beta_{\kappa}(s)}{\beta_\kappa(1-s)} &= 
    \frac{\gammafunc\left(\frac{s + i \omega \kappa + 1}{2}\right) \gammafunc\left(\frac{s - i \omega \kappa + 1}{2}\right)}{\gammafunc\left(\frac{2-s + i \omega \kappa}{2}\right) \gammafunc\left(\frac{2-s - i \omega \kappa}{2}\right)} \\
    &\sim \sum_{k=0}^\infty \left|\frac{\omega \kappa}{2}\right|^{2s-1 -2k} \sum_{n=0}^{2k} e^{\pi i   (k- n)} G_n(a,b) G_{2k-n}(a,b).
\end{align*}
By \cite[Eq. 5.11.15]{DLMF}, the leading coefficient is given by $G_0(a,b)^2 = 1$.
Combining this with~\eqref{eq:smatrix-funnel}, we obtain that
\begin{align*}
    S_{F_\ell,\twist}(s)^j_k \sim 2^{1-2s} 
\frac{\gammafunc(\frac12-s)}{\gammafunc(s-\frac12)} \abs{(k + \vartheta_j) 
\omega}^{2s-1}\,.
\end{align*}
The full asymptotic expansion now implies that 
$S_{F_\ell,\twist}(s)^j_k$ satisfies the symbol estimates for global 
pseudodifferential operators on the torus, 
as stated in~\eqref{eq:periodic_symbol_estimates}. Hence,
\begin{align}\label{eq:smatrix-funnel-psido}
S_{F_\ell,\twist}(s) \in \Psi^{2\Re s - 1}(\pa_\infty F_\ell, \bundle|_{\pa_\infty F_\ell}), \quad s 
\not \in \ResSet_{F_\ell,\twist} \cup \left(\N_0 + \frac12\right)\,.
\end{align}

We define the \emph{reduced scattering matrix} $\tilde{S}_{F_\ell,\twist}(s)$ as 
follows:
we consider the invertible elliptic pseudodifferential operator $\Lambda \in 
\Psi^1(\pa_\infty F_\ell,\bundle|_{\pa_\infty F_\ell})$ defined by 
\begin{align*}
    \Lambda \psi_j = \sum_{k \in \Z} e^{i (k+ \vartheta_j)(\phi - \phi')} 
\ang{k} \psi_j\,,
\end{align*}
set 
\[
G(s) \coloneqq \gammafunc\left(s+\frac12\right) \id_{\CI(\pa_\infty F_\ell, 
\bundle|_{\pa_\infty F_\ell})}
\]
and define 
\begin{align*}
    \tilde{S}_{F_\ell,\twist}(s) \coloneqq G(s) \Lambda(s) S_{F_\ell,\twist}(s) \Lambda(1-s)^{-1} G(1-s)^{-1}\,, 
\end{align*}
for  $s \not \in \ResSet_{F_\ell,\twist} \cup (\N_0 + 1/2)$.
A straightforward calculation shows that the Fourier coefficients of 
$\tilde{S}_{F_\ell,\twist}(s)$ are 
\begin{equation}\label{eq:Fell_spole}
\begin{aligned}
    \tilde{S}_{F_\ell,\twist}(s)^j_k &= \frac{\gammafunc(s+\frac12)}{\gammafunc(\frac12-s)} \ang{k}^{-2s + 1} S_{F_\ell,\twist}(s)^j_k \\
    &= \ang{k}^{-2s+1} \frac{ (s-\frac12) \beta_{k+\vartheta_j}(s)}{\beta_{k+\vartheta_j}(1-s)}\,.
\end{aligned}
\end{equation}
Since the right-hand side of the last equation is defined for all $s \not \in \ResSet_{F_\ell,\twist}$, the scattering matrix is defined as an operator
$\tilde{S}_{F_\ell,\twist}(s) \in \Psi^{0}(\pa_\infty F_\ell, 
\bundle|_{\pa_\infty F_\ell})$ for $s \not \in \ResSet_{F_\ell,\twist}$.
Taking advantage of this property, we can characterize the resonances in terms 
of the scattering matrix.

\begin{prop}\label{prop:Fell_spole_resonance}
    Let $s\in\C$, $\Re s < 1/2$ and let $m\in\N$. Then the reduced scattering matrix~$\tilde{S}_{F_\ell,\twist}$ has a pole of rank~$m$ at~$s$ if and only if $s$ is a resonance of multiplicity~$m$ of~$\Delta_{F_\ell,\twist}$. In this case, $m=m_{X,\twist}(s)$.
\end{prop}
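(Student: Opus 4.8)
The plan is to read the statement off three explicit pieces of data: the Fourier-coefficient formula~\eqref{eq:Fell_spole} for $\tilde S_{F_\ell,\twist}$, the resonance multiset~\eqref{eq:resonances_funnel}, and---for the last assertion---the identity $S_{F_\ell,\twist}(s) = (2s-1)(\rho_f\rho_f')^{-s}R_{F_\ell,\twist}(s)|_{\pa_\infty F_\ell\times\pa_\infty F_\ell}$, valid for $\Re s<1/2$. First I would note that $\tilde S_{F_\ell,\twist}(s)$ is a Fourier multiplier: in the orthogonal basis $\{e^{i(k+\vartheta_j)\phi}\psi_j : k\in\Z,\ 1\le j\le\dim V\}$ of $L^2(\pa_\infty F_\ell,\bundle|_{\pa_\infty F_\ell})$ it multiplies the $(j,k)$-th vector by the scalar $\tilde S_{F_\ell,\twist}(s)^j_k$. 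Hence, for $s_0$ with $\Re s_0<1/2$, the operator $\tilde S_{F_\ell,\twist}$ has a pole at $s_0$ precisely when some coefficient $s\mapsto\tilde S_{F_\ell,\twist}(s)^j_k$ does, and---each nonzero Laurent coefficient of such a scalar gamma quotient being a nonzero multiple of the rank-one projection onto the corresponding mode---the rank of the pole equals $\sum_{j,k}n^j_k(s_0)$, where $n^j_k(s_0)\in\N_0$ is the pole order of the $(j,k)$-th coefficient at $s_0$ (with $n^j_k(s_0)=0$ if that coefficient is holomorphic at $s_0$).

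Next I would localise these poles. Put $\kappa=k+\vartheta_j$. On $\{\Re s<1/2\}$ the factors $\ang{k}^{1-2s}$ and $s-\tfrac12$ are holomorphic and zero-free, and so is $\beta_\kappa(1-s)$ (by~\eqref{def:beta} its gamma arguments have real part exceeding $3/4$, and $\gammafunc$ has no zeros). Hence on $\{\Re s<1/2\}$ the poles of $\tilde S_{F_\ell,\twist}(\cdot)^j_k$ are exactly the poles of $\beta_\kappa$, i.e. the points of $-(1+2\N_0)\pm i\omega\kappa$, simple when $\omega\kappa\neq0$ and of order two when $\omega\kappa=0$. Using $\lambda_j=e^{2\pi i\vartheta_j}$, so that $\ell^{-1}(\log\lambda_j+2\pi i\Z)=i\omega(\vartheta_j+\Z)$, the multiset~\eqref{eq:resonances_funnel} becomes
\[
  \ResSet_{F_\ell,\twist}=\bigcup_{j=1}^{\dim V}\ \bigcup_{k\in\Z}\ \bigcup_{p\in\{\pm1\}}\bigl(-(1+2\N_0)+p\,i\omega(k+\vartheta_j)\bigr),
\]
which is exactly the family of poles just found, counted with their orders. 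A pair-by-pair comparison---the only coincidences being repeated eigenvalues $\lambda_j$ and the case $\lambda_j^2=1$, where the $p=+1$ and $p=-1$ families merge and one meets either a double pole of a single coefficient (when $\omega(k+\vartheta_j)=0$) or a pair of modes with opposite $\kappa$-values each carrying a simple pole---shows that the multiplicity of any $s_0$ with $\Re s_0<1/2$ in $\ResSet_{F_\ell,\twist}$ equals $\sum_{j,k}n^j_k(s_0)$. Combined with the first step and the fact (\cite{DFP}) that $\ResSet_{F_\ell,\twist}$ is the multiset of resonances of $\Delta_{F_\ell,\twist}$, this gives the claimed equivalence and the equality of the two multiplicities.

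Finally, to identify this common value with $m_{X,\twist}(s_0)$, I would pass from the model funnel to the surface. The identity $S_{F_\ell,\twist}(s)=(2s-1)(\rho_f\rho_f')^{-s}R_{F_\ell,\twist}(s)|_{\pa_\infty F_\ell\times\pa_\infty F_\ell}$, together with the fact that near a funnel-lattice point in $\{\Re s<1/2\}$ the factor $2s-1$ is nonzero and the elliptic operators $G(s)$ and $\Lambda$ relating $S_{F_\ell,\twist}$ to $\tilde S_{F_\ell,\twist}$ are holomorphic and invertible, shows that the principal part of $\tilde S_{F_\ell,\twist}$ at $s_0$ has the same rank as that of $(\rho_f\rho_f')^{-s}R_{F_\ell,\twist}(s)|_{\pa_\infty F_\ell\times\pa_\infty F_\ell}$; I would then invoke the comparison of $\ResTwist$ with the model resolvent near the funnel end to carry this over to the residue of $\ResTwist$ at $s_0$. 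I expect this last step---matching the finite-rank residue data of the model against that of the genuine surface, in particular controlling the restriction to $\pa_\infty F_\ell$, for which $\Re s<1/2$ is essential (it excludes the square-integrable eigenfunctions with trivial leading boundary data that spoil the count when $\Re s>1/2$)---to be the main obstacle; the first two steps are an essentially mechanical comparison of explicit formulas.
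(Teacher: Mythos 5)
Your first two steps are correct and are essentially the paper's proof, spelled out in more detail: the paper likewise observes that $\ang{k}^{1-2s}(s-\tfrac12)/\beta_{k+\vartheta_j}(1-s)$ is holomorphic and zero-free on $\{\Re s<1/2\}$, deduces that the poles of $\tilde S_{F_\ell,\twist}(s)^j_k$ are exactly those of $\beta_{k+\vartheta_j}(s)$, and matches the resulting multiset against~\eqref{eq:resonances_funnel}. Your explicit accounting of the rank via $\sum_{j,k}n^j_k(s_0)$ is a useful refinement of what the paper leaves implicit, and your treatment of order-two poles at $\omega\kappa=0$ and of the $p=\pm1$ coincidence in the real case is sound.

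The third step, however, is based on a misreading. The statement's final clause ``$m=m_{X,\twist}(s)$'' is evidently a slip for $m=m_{F_\ell,\twist}(s)$ (compare the usage in the proof of Proposition~\ref{prop:tau-factorization}, where the result is cited to give $M_s(\tilde S_{X_f,\twist})=-m_{X_f,\twist}(s)$): the proposition concerns only the model funnel, and the multiplicity in question is the one built into the multiset $\ResSet_{F_\ell,\twist}$, which by~\cite[Proposition~4.12]{DFP} equals the residue-rank multiplicity of the resonance of $\Delta_{F_\ell,\twist}$. That identification is already contained in your first two steps; no passage to $X$ is needed, and none is possible: $\ResSet_{F_\ell,\twist}$ is the explicit lattice~\eqref{eq:resonances_funnel}, whereas the resonance multiset of the full surface $X$ is a different object, so the literal equality $m=m_{X,\twist}(s)$ you set out to prove in your last paragraph is false in general. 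In short, drop the third paragraph: the proposition follows from your first two steps alone.
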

\begin{proof}
    By definition of $\beta_\kappa$, we have that
    \begin{align*}
        \ang{k}^{-2s+1} \frac{ (s-\frac12)}{\beta_{k+\vartheta_j}(1-s)}
    \end{align*}
    is holomorphic and non-zero for $\Re s < 1/2$. Therefore, the poles counted with multiplicities of $\tilde{S}_{F_\ell,\twist}(s)^j_k$ are given by
    the multiset
    \begin{align*}
        \bigcup_{p \in \{\pm 1\}} \left( -(1 + 2\N_0) + 2\pi p \ell^{-1} (\vartheta_j + k) \right)\,.
    \end{align*}
    Hence, the poles of $\tilde{S}_{F_\ell,\twist}(s)$ are given by the multiset \eqref{eq:resonances_funnel}.
\end{proof}

Finally, we recall the singular value estimate for the scattering matrix from 
\cite[Lemma 6.14]{DFP}. 
For this, we will define functions $d_k \colon \C \to \C$ for $k \in \N$,
which have poles contained in the set of resonances of $\Delta_{F_\ell,\twist}$.
We set
\begin{align*}
\tilde{\mathcal{R}}_0 & \coloneqq 1 - 2\N_0\,,
\\
\mathcal{R}_0 & \coloneqq 1 - 2\N_0 + i\omega \Z\setminus\{0\}\,,
\\
\mathcal{R}_{1/2} & \coloneqq 1 - 2\N_0 + i \omega \left(\frac12 + \Z\right)\,,
\\
\mathcal{R}_{\vartheta} & \coloneqq \bigcup_{p \in \{\pm 1\}}
\left( 1 - 2\N_0 + ip\hspace{1pt}\omega (\vartheta + \Z)\right),\quad \vartheta 
\not \in \left\{0,\frac12\right\}\,,
\end{align*}
where we denote by $m_{\vartheta}$ the multiplicity of the eigenvalue
$\lambda = e^{2\pi i\vartheta}$ of $\twist(h_\ell)$.
We can assume without loss of generality that $\vartheta \in [0, 1)$.
Denote by $d_{\C}$ the Euclidean distance on $\C$.
For $k \in \N$ we define $d_{k,\vartheta}(s)$ as follows: for $\vartheta 
\in (0,1) \setminus \{1/2\}$, we set
\begin{align*}
d_{k,\vartheta}(s) \coloneqq 
\begin{cases}
\dist(s,\mathcal{R}_{\vartheta})^{-1}, & k \leq m_{\vartheta},
\\
1, & k > m_{\vartheta}
\end{cases}
\end{align*}
and for $\vartheta \in \{0,1/2\}$, we set
\begin{align*}
d_{k,\vartheta}(s) \coloneqq 
\begin{cases}
\dist(s,\mathcal{R}_{\vartheta})^{-1}, & k \leq 2m_{\vartheta},
\\
1, & k > 2m_{\vartheta}.
\end{cases}
\end{align*}
Moreover, we define the function $\tilde{d}_{k,0}$ by
\begin{align*}
\tilde{d}_{k,0}(s) \coloneqq 
\begin{cases}
\dist(s,\tilde{\mathcal{R}}_{0})^{-2}, & k \leq m_0,
\\
1, & k > m_0.
\end{cases}
\end{align*}
Finally, we set
\begin{align}\label{eq:def_dks}
d_k(s) \coloneqq \tilde{d}_{k,0}(s) \cdot \prod_{\vartheta} d_{k,\vartheta}(s)\,.
\end{align}
It is shown in \cite[Lemma~6.14]{DFP} that
for any $\eps \in (0,1/2)$ there exists $C > 0$ such that for
$s \in \C$ with $\Re s < 1/2 - \eps$,
\begin{align}\label{eq:svalues-smatrix}
\mu_k(S_{F_f,\twist}(s)) \leq e^{C\ang{s}} \ang{s}^{1 - 2\Re s} \times  
\begin{cases}
d_k(s), & k \leq \max \{m_0, 2m_{\vartheta_j}\}\,,
\\
k^{2\Re s - 1}, & k > \max \{m_0, 2m_{\vartheta_j}\}\,.
\end{cases}
\end{align}

\subsection{Parabolic cylinders}\label{sec:parab_cyl}
We now turn to the parabolic cylinder, where the structure of the resolvent is 
slightly simpler than for the hyperbolic cylinder.

The \emph{parabolic cylinder} is given by $C_\infty \coloneqq \ang{T} \bs \h$, where $T.z \coloneqq 
z+1$. We can choose as fundamental domain the set
\begin{align*}
\funddom \coloneqq \{x+iy \in \h \colon x \in (0,1)\}\,.
\end{align*}
With the coordinates $(\rho,\phi) = (y^{-1}, (2\pi)^{-1} x)$, the 
induced Riemannian metric reads
\begin{align}\label{eq:cuspmetric_rho}
    g_{C_\infty} = \frac{d\rho^2}{\rho^2} + \rho^2 \frac{d\phi^2}{4\pi^2}
\end{align}
and $\rho_c(\rho,\phi) = \rho$, where $\rho_c$ is the canonical boundary defining function.
In the $(x,y)$-coordinates the Laplacian is given by
\begin{align*}
    \Delta_{C_\infty} = -y^2 (\pa_x^2 + \pa_y^2)\,.
\end{align*}

Let $\twist \colon \ang{T} \to \Unit(V)$ be a finite-dimensional unitary representation.
We denote by $E_1(\twist(T))$ the eigenspace of $\twist(T)$ for 
eigenvalue~$1$, and we set $n_c^\twist \coloneqq \dim E_1(\twist(T))$.

The meromorphically continued resolvent $R_{C_\infty,\twist}(s)$ defines a 
continuous map
\begin{align}\label{eq:asymptotics_cusp}
\psi R_{C_\infty,\twist}(s) \colon \CcI(C_\infty,\bundle) \to 
\rho_c^{s-1} \CI(\overline{C_\infty}, \bundle)
\end{align}
provided that $s \not = 1/2$, where $\psi$ is any element 
of~$\CI(\overline{C_\infty})$ that is supported away from $\{y = 0\}$.
The only pole of $R_{C_\infty,\twist}(s)$ is at the point $s=1/2$ and its multiplicity is equal to $n_c^\twist$.

The integral kernel of the resolvent $R_{C_\infty,\twist}(s)$ admits a Fourier decomposition.
For any $j \in \{1, \dotsc, \dim V\}$, the Fourier decomposition of the non-vanishing matrix coefficients $R_{C_\infty,\twist}(s; z, z')^j$ is  given by
\begin{equation}\label{eq:fdecompparabcyl}
R_{C_\infty,\twist}(s; z, z')^j = \sum_{k \in \Z} e^{2 \pi i (k + \vartheta_j) (x-x')} u_{2\pi(k + \vartheta_j)}(s;y, y ') \,.
\end{equation}
Here, the maps $u_\kappa$ for $\kappa \in \R$ are defined as follows: for 
$\kappa \in \R \setminus\{0\}$, we set
	\begin{align*}
	u_\kappa(s;y,y') \coloneqq \begin{cases}
	\sqrt{yy'} I_{s-1/2}(\abs{\kappa}y) K_{s-1/2}(\abs{\kappa}y'), & y\leq 
	y',\\
	\sqrt{yy'} K_{s-1/2}(\abs{\kappa}y) I_{s-1/2}(\abs{\kappa}y'), & y > y',
	\end{cases}
	\end{align*}
	where $I_{s-1/2}$ and $K_{s-1/2}$ is the modified Bessel function of the 
first and the second kind, respectively (see \cite[\S~3.7]{Watson44}). 
Moreover, for $\kappa=0$ and $s \not = 1/2$ we set
	\begin{align*}
	u_0(s;y,y') \coloneqq \frac{1}{2s-1}
	\begin{cases}
	y^s (y')^{1-s}, & y \leq y',
	\\
	y^{1-s} (y')^{s}, & y > y'.
	\end{cases}
	\end{align*}
	
The \emph{Poisson operator} $E_{C_\infty,\twist}(s)$ is given by
\begin{align*}
    E_{C_\infty,\twist}(s) &: \CI(\pa_c C_\infty, \bundle) \to \CI(C_\infty, \bundle)\\
    (E_{C_\infty,\twist}(s) u)(x,y) &\coloneqq \frac{y^s}{2s-1} u(x)\,,
\end{align*}
where $u \in \CI(\pa_c C_\infty) \cong \C^{n_c^\twist}$.
The Schwartz kernel of $E_{C_\infty,\twist}(s)$ is given by
\begin{equation}\label{eq:poisson-model-cusp}
    \begin{aligned}
        E_{C_\infty,\twist}(s;x,y,x') &= \frac{y^s}{2s-1} \id_{E_1(\twist(T))} \\
        &= \lim_{y' \to \infty} \rho_c(y')^{1-s} R_{C_\infty,\twist}(s;x,y,x',y')
    \end{aligned}
\end{equation}
by the Fourier decomposition in~\eqref{eq:fdecompparabcyl}.
In particular, 
\[
E_{C_\infty,\twist}(s;x,y,x') = E_{C_\infty,\twist}(s;y)
\]
is independent of $x,x'$.

\section{Analysis of the Resolvent}\label{sec:resolvent}

In this section, we discuss fine-structure properties of the resolvent of 
$\LapTwist$. 
We start, in Theorem~\ref{thm:resolvent}, with a decomposition of its resolvent 
into interior and residual terms, which are then discussed separately in more 
detail. In Section~\ref{sec:resolvent_at_resonance}, we give a description of the resolvent near a 
resonance. 
In Section~\ref{sec:no-resonances}, we prove that on the line $\Re(s) = 1/2$ there are no resonances except for potentially $s=1/2$.
Moreover, in Proposition \ref{prop:no-eigenvalues}, we prove that if the 
hyperbolic surface~$X$ has infinite volume, then $\LapTwist$ has no eigenvalues larger than $1/4$.

As in \cite[Section 3.2.3]{DFP}, we take advantage of the decomposition 
\[
X = K \sqcup X_f \sqcup X_c\,,
\]
where $K$ is compact and $X_f$ and~$X_c$ are finite collections of funnels and cusps, respectively.
For $\bullet \in \{f,c\}$ and $r \in [0,\infty)$, we choose a cutoff function $\eta_{\bullet,r} \in \CI(X)$ such that
\begin{align*}
\eta_{\bullet,r}(x) = 
\begin{cases} 
1, & \text{if $d(X \setminus X_\bullet,x) < r$}\,,
\\ 
0, & \text{if $d(X\setminus X_\bullet,x) > r+\frac12$}\,.
\end{cases}
\end{align*}
We fix $s_0 \in \C$ with sufficiently large real part
(such that $s(1-s)$ is sufficiently far away from the spectrum of 
$\Delta_{X,\twist}$) 
and denote by $n_f$ and $n_c$ the number of connected components of $X_f$ and $X_c$, respectively.
As in \cite[Section~5]{DFP}, we set
\begin{align}
M_i &\coloneqq \eta_{f,2}\eta_{c,2} \ResTwist(s_0) \eta_{f,1}\eta_{c,1} \,, \\
M_f(s) &\coloneqq (1 - \eta_{f,0}) R_{X_f,\twist}(s) ( 1 - \eta_{f,1})\,,\label{eq:resolvent-mf} \\
M_c(s) &\coloneqq (1 - \eta_{c,0}) R_{X_c,\twist}(s) ( 1 - \eta_{c,1})\,,\label{eq:resolvent-mc}
\end{align}
where 
\begin{equation*}
\begin{gathered}
R_{X_f,\twist}(s)\colon L^2(X_f,\bundle) \to L^2(X_f,\bundle),
\\
R_{X_f,\twist}(s) \coloneqq  R_{X_{f,1},\twist_1}(s) \oplus \dotsc \oplus R_{X_{f,{n_f}},\twist_{n_f}}(s)
\end{gathered}
\end{equation*}
and 
\begin{equation*}
\begin{gathered}
R_{X_c,\twist}(s)\colon L^2(X_c,\bundle) \to L^2(X_c,\bundle),
\\
R_{X_c,\twist}(s) \coloneqq R_{X_{c,1},\twist_1}(s) \oplus \dotsc \oplus R_{X_{c,{n_c}},\twist_{n_c}}(s)\,.
\end{gathered}
\end{equation*}
Further, we set 
\[
M(s) \coloneqq M_i  + M_f(s) + M_c(s)
\]
and, as in 
\cite[(85)]{DFP}, we define 
\begin{align}\label{eq:parametrix-error}
L(s) \coloneqq L_i(s) + L_f(s) + L_c(s)\,,
\end{align}
where 
\begin{align}
L_i(s) & \coloneqq -[\LapTwist, \eta_{f,2}\eta_{c,2}] \ResTwist(s_0) \eta_{f,1}\eta_{c,1}
\nonumber
\\
&\phantom{\coloneqq -} + (s(1-s)- s_0(1-s_0)) M_i(s_0)\,, 
\nonumber
\\
L_f(s) &\coloneqq [\LapTwist, \eta_{f,0}] R_{X_f,\twist}(s) (1 - \eta_{f,1})\,,\label{eq:resolvent-lf}
\intertext{and}
L_c(s) &\coloneqq [\LapTwist, \eta_{c,0}] R_{X_c,\twist}(s) (1 - \eta_{c,1})\,.\label{eq:resolvent-lc}
\end{align}
It follows that
\begin{align}\label{eq:parametrix-applied}
(\LapTwist - s(1-s)) M(s)= \id - L(s)\,.
\end{align}
It was proven in \cite[Section 5]{DFP} that $(\id - L(s))^{-1}$ exists as a 
meromorphic family in $s \in \C$.

\begin{thm}\label{thm:resolvent}
	Let $X = \group \bs \h$ be geometrically finite and let $\twist \colon \group \to \Unit(V)$ be  a finite-dimensional unitary representation of $\group$.
    For $s\in \C$ not a pole of neither~$\ResTwist(s) $ nor $M_f(s)$ nor 
$M_c(s)$, the resolvent admits a decomposition
    \begin{align*}
    \ResTwist(s) = \tilde{M}_i(s) + M_f(s) + M_c(s) + Q(s)\,,
    \end{align*}
    where
    \begin{itemize}
    \item $\tilde{M}_i(s)$ is a compactly supported pseudodifferential operator 
of order~$-2$,
    \item $M_f(s)$ and $M_c(s)$ are as in \eqref{eq:resolvent-mf} and \eqref{eq:resolvent-mc}, respectively, and
    \item $Q(s)$ is an integral operator with the integral kernel $Q(s;\cdot, \cdot)$ satisfying \[ Q(s;\cdot, \cdot) \in (\rho_f\rho_f')^s (\rho_c\rho_c')^{s-1}\CI(\overline{X} \times \overline{X}, \bundle \boxtimes \bundle').\]
    \end{itemize}
\end{thm}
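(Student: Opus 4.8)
The plan is to construct the decomposition directly from the parametrix identity \eqref{eq:parametrix-applied}, namely $(\LapTwist - s(1-s))M(s) = \id - L(s)$, using the fact (from \cite[Section~5]{DFP}) that $(\id - L(s))^{-1}$ exists as a meromorphic family on $\C$. Since away from the poles of $\ResTwist$ we have $\ResTwist(s)(\LapTwist - s(1-s)) = \id$ on $L^2_\cpt$, composing \eqref{eq:parametrix-applied} on the right with $(\id - L(s))^{-1}$ gives $M(s)(\id - L(s))^{-1} = \ResTwist(s)$ wherever all objects are defined. The strategy is therefore to write
\[
\ResTwist(s) = M(s) + M(s)\bigl((\id - L(s))^{-1} - \id\bigr) = M(s) + M(s)L(s)(\id - L(s))^{-1}
\]
and to absorb the various pieces into $\tilde M_i(s)$, $M_f(s)$, $M_c(s)$, and the remainder $Q(s)$. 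First I would recall that $M(s) = M_i + M_f(s) + M_c(s)$, so two of the three desired terms are already present verbatim; the point is that the correction term $M(s)L(s)(\id - L(s))^{-1}$, together with $M_i$, reassembles into a compactly supported $\Psi$DO of order $-2$ plus a smoothing error with the stated funnel/cusp kernel asymptotics.

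The key structural observation is that $L(s)$ — being built from commutators $[\LapTwist,\eta_{\bullet,0}]$ and from $L_i(s)$, which contains $[\LapTwist, \eta_{f,2}\eta_{c,2}]\ResTwist(s_0)\eta_{f,1}\eta_{c,1}$ and a multiple of $M_i$ — is supported where the cutoffs have nonvanishing derivative, i.e.\ in a compact region, and is moreover a smoothing operator there (commutators of $\LapTwist$ with cutoffs applied to resolvents gain infinite regularity off the support overlap, as in the usual Guillop\'e–Zworski gluing). Hence $L(s)$ and $(\id - L(s))^{-1} - \id = L(s)(\id - L(s))^{-1}$ are compactly supported smoothing operators, meromorphic in $s$. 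I would then examine the product $M(s)\cdot L(s)(\id - L(s))^{-1}$ piece by piece: the contribution of $M_i$ (a genuine order $-2$ $\Psi$DO, compactly supported) composed with a compactly supported smoothing operator stays a compactly supported smoothing operator, hence can be grouped with $M_i$ into $\tilde M_i(s)$, a compactly supported $\Psi$DO of order $-2$. The contributions of $M_f(s)$ and $M_c(s)$ composed with the compactly supported smoothing correction are where the boundary asymptotics enter: here one uses \eqref{eq:asymptotics_funnel} (equivalently \eqref{eq:poisson-model-funnel}) for the funnel resolvent, which gives the $(\rho_f\rho_f')^s$ behaviour, and \eqref{eq:asymptotics_cusp} for the cusp resolvent, which gives the $\rho_c^{s-1}$ behaviour; since the smoothing factor is compactly supported (so its left variable sits in $K$, well away from the boundary at infinity), only the "outgoing" variable of $M_f(s)$ resp.\ $M_c(s)$ survives, producing a kernel in $(\rho_f\rho_f')^s(\rho_c\rho_c')^{s-1}\CI(\overline X\times\overline X, \bundle\boxtimes\bundle')$. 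All such terms, together with any leftover smoothing pieces, are collected into $Q(s)$.

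The main obstacle — and the step requiring genuine care rather than bookkeeping — is verifying that $Q(s)$ genuinely has the \emph{product} form $(\rho_f\rho_f')^s(\rho_c\rho_c')^{s-1}$ jointly in both variables and with conormal (smooth up to the boundary) dependence, rather than merely separate one-sided asymptotics. This requires combining the funnel and cusp asymptotics with the fact that $M_f(s)$ and $M_c(s)$ act on functions supported away from the compact core (because of the $(1-\eta_{\bullet,1})$ factors), so that the overlap regions are controlled, and that multiplication by the compactly supported smoothing operator does not destroy the boundary behaviour in the surviving variable. One also has to check that the new poles introduced by $(\id - L(s))^{-1}$ are exactly accounted for by the hypothesis that $s$ is not a pole of $\ResTwist$, $M_f$, or $M_c$ — i.e.\ that no spurious poles are created in $\tilde M_i(s)$ or $Q(s)$ — which follows since $M(s)(\id - L(s))^{-1} = \ResTwist(s)$ forces the poles of the right-hand side to match. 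I expect the funnel half of the boundary-asymptotics argument to be the most delicate, since the cusp resolvent kernel (via the explicit Bessel-function Fourier decomposition \eqref{eq:fdecompparabcyl}) has comparatively transparent $\rho_c^{s-1}$ behaviour, whereas the funnel requires the conormal regularity statement \eqref{eq:asymptotics_funnel} together with care about the $1-\rho^2$ factor in the metric \eqref{eq:funnelmetric_rho} near $\rho = 1$ (the $r=0$ Dirichlet boundary), which however does not affect the behaviour at $\rho_f \to 0$.
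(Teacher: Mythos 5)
Your overall framework is the same as the paper's — start from the parametrix identity, write $\ResTwist(s) = M(s)(\id - L(s))^{-1}$, and absorb the correction term into $\tilde M_i$ and $Q$ — but a key structural claim you make about $L(s)$ is wrong, and this is precisely where the paper has to put in work that your sketch skips.

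You assert that $L(s)$, and hence $(\id - L(s))^{-1} - \id$, is a compactly supported smoothing operator. Neither adjective is correct. First, the support: the commutator factors $[\LapTwist,\eta_{\bullet,0}]$ in $L_f(s)$ and $L_c(s)$ compactly localize the \emph{output}, so we do have $\eta_3 L(s) = L(s)$; but the right-hand cutoffs are $(1-\eta_{\bullet,1})$, which are supported to infinity in the funnel and cusp ends. So $L(s)$ is compactly supported in its range, not in its domain, and $L(s)(\id - L(s))^{-1}$ inherits the same defect. Second, the smoothing property: $L_i(s)$ contains the term $(s(1-s)-s_0(1-s_0))\,M_i(s_0)$, which is a $\Psi$DO of order $-2$, not smoothing. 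So your $\tilde M_i(s) := M_i + M_i\,L(s)(\id - L(s))^{-1}$ is \emph{not} a compactly supported operator (the kernel is not cut off in the right variable), and the reason your sketch doesn't detect this is that it never introduces the right-hand cutoff.

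What the paper does to close this gap is exactly the $\eta_3$-sandwiching that is missing from your argument: it writes $M(s) K(s) = \eta_3 M(s) K(s) \eta_3 + Q(s)$, groups only the doubly-cut piece with $M_i$ into $\tilde M_i(s)$, and then exploits the identity $\eta_3 L(s) = L(s)$ to deduce $\id + K(s)\eta_3 = (\id - L(s)\eta_3)^{-1}$, where $L(s)\eta_3$ \emph{is} genuinely compactly supported and of order $-2$ (not smoothing, again because of the $M_i(s_0)$ piece). This is what makes $\eta_3 M(s)K(s)\eta_3$ a compactly supported $\Psi$DO of the right order. The remainder $Q(s)$ then has the two one-sided pieces $(1-\eta_3)M(s)K(s)\eta_3$ and $M(s)K(s)(1-\eta_3)$, and the paper separately verifies the $(\rho_f\rho_f')^s(\rho_c\rho_c')^{s-1}$ conormal regularity for each, using that $L(s)$ maps compactly supported sections into $\rho_f^s\rho_c^{s-1}\CI(\overline X, \bundle)$. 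Your intuition about where the boundary asymptotics enter (the funnel/cusp resolvent kernel estimates, the care needed to get joint conormality) is sound, but without the doubly-cut interior term and the corrected description of $L(s)$, the decomposition as stated does not produce a compactly supported $\tilde M_i(s)$.
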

\begin{remark}
    The product $\overline{X} \times \overline{X}$ is not a smooth manifold 
(even in the absence of orbifold points).
    The reason is that the geodesic boundary at infinity of a cusp end is a single point. Blowing up each parabolic fixed point to a $1$-sphere, we obtain a orbifold with boundary $\overline{X}'$.
    We define smooth functions on $\overline{X} \times \overline{X}$ as the set of function that pullback to smooth functions on $\overline{X}' \times \overline{X}'$.

    Note that blowing up a parabolic fixed point amounts to introducing coordinates $(\rho, \phi)$ as in Section~\ref{sec:parab_cyl}, where $\{\rho = 0\} \cong \Sph^1$ is the blowup of the parabolic fixed point.
\end{remark}

\begin{proof}[Proof of Theorem~\ref{thm:resolvent}]
We set 
\begin{align}\label{k:definition}
K(s) \coloneqq (\id - L(s))^{-1} L(s)\,.
\end{align}
Note that 
\begin{align}\label{eq:Kss0}
    \id + K(s) = (\id - L(s))^{-1}\,.
\end{align}
Then \eqref{eq:parametrix-applied} implies
\begin{align*}
\ResTwist(s) = M(s)(\id + K(s))\,.
\end{align*}
For notational simplicity, define $\eta_3 \coloneqq  \eta_{f,3}\eta_{c,3}$.
We now split $M(s) K(s)$ as
\begin{align*}
M(s) K(s) = \eta_3 M(s) K(s) \eta_3 + Q(s)\,,
\end{align*}
where
\begin{align*}
Q(s) \coloneqq (1-\eta_3) M(s) K(s) \eta_3 + M(s) K(s) (1-\eta_3)\,.
\end{align*}
Moreover, we define $\tilde{M}_i(s) \coloneqq M_i + \eta_3 M(s) K(s) \eta_3$ and note that
\begin{align*}
\ResTwist(s) = \tilde{M}_i(s) + M_f(s) + M_c(s) + Q(s)\,.
\end{align*}
We now have to show that $\tilde{M}_i(s)$ and $Q(s)$ have the claimed properties.

\subsection*{Interior term}
The operator $M_i$ is a compactly supported pseudodifferential operator by definition, so it suffices to show that
$\eta_3 M(s) K(s) \eta_3$ is a pseudodifferential operator of order at most $-2$.
By~\eqref{eq:parametrix-error} we have 
\begin{align*}
\eta_3 L(s) = L(s)\,.
\end{align*}
Now equation~\eqref{eq:Kss0} directly implies that
\begin{align*}
    K(s)\eta_3 = (\id - L(s))^{-1} \eta_3 - \eta_3\,.
\end{align*}
From $\eta_3 L(s) = L(s)$, we obtain
\begin{align*}
    (\id + K(s) \eta_3) (\id - L(s) \eta_3) = \id\,.
\end{align*}
Consequently,
\begin{align}\label{eq:K-eta3}
    \id + K(s) \eta_3 = (\id - L(s) \eta_3)^{-1}\,.
\end{align}
for $s$ close to $s_0$.
By the identity theorem for holomorphic functions,
the equality in~\eqref{eq:K-eta3} is valid for all $s \in \C$.
Formally, we can also obtain this from the geometric series,
\begin{align*}
    \id + K(s)\eta_3 = \id + \sum_{k > 0} L(s)^k \eta_3 = (\id - L(s)\eta_3)^{-1}\,.
\end{align*}
We have that
\begin{align*}
    L(s) \eta_3 = \left( s(1-s) - s_0(1-s_0) \right) M_i + \tilde{Q}(s)\,,
\end{align*}
where $\tilde{Q}(s)$ is compactly supported and smoothing.
Thus, $L(s) \eta_3$ is a pseudodifferential operator of order $-2$ and
therefore
\begin{align*}
    K(s)\eta_3 = (\id - L(s)\eta_3)^{-1} L(s) \eta_3
\end{align*}
is also a pseudodifferential operator of order $-2$.
By the definition of $M(s)$, the operator $\eta_3 M(s)$ is a
pseudodifferential operator of order $-2$
and hence
$\eta_3 M(s) K(s) \eta_3$ is a pseudodifferential operator of order $-4$.

\subsection*{Residual term}
To study the operator $Q(s)$, we start by considering the operator $M(s) K(s) (1-\eta_3)$. We use \eqref{k:definition} to show that 
\begin{align*}
    K(s) = L(s) (\id + K(s))\,.
\end{align*}
Since $L(s)$ maps to compactly supported smooth sections, we use the explicit 
calculations for the model resolvents to obtain that,  for any $\varphi \in 
\CcI(X, \bundle)$, we have
\begin{align*}
    L(s)^T \varphi \in (\rho_f)^s (\rho_c)^{s-1} \CI(\overline{X}, \bundle)\,.
\end{align*}
Moreover, the property 
\[
L(s) \varphi \in (\rho_f)^s (\rho_c)^{s-1} \CI(\overline{X}, \bundle)
\]
implies that
the integral kernel $K(s) (1 -\nobreak \eta_3)(\cdot,\cdot)$ of the operator 
$K(s) (1 - \eta_3)$ satisfies 
\begin{align*}
K(s) (1 - \eta_3)(\cdot,\cdot) \in (\rho_f \rho_c)^\infty (\rho_f')^s (\rho_c')^{s-1}\CI(\overline{X} \times \overline{X}, \bundle \boxtimes \bundle')
\end{align*}
and is compactly supported in the left-most variable.
Using that $M_i$ is a compactly supported pseudodifferential operator and $M_f(s)$ and $M_c(s)$ are given by the model resolvents, we conclude that the integral kernel of the operator $M(s) K(s) (1 - \eta_3)$ satisfies 
\begin{align*}
M(s) K(s) (1 - \eta_3)(\cdot,\cdot) \in (\rho_f\rho_f')^s (\rho_c\rho_c')^{s-1}
\CI(\overline{X} \times \overline{X}, \bundle \boxtimes_0 \bundle')\,.
\end{align*}
For $(1 - \eta_3) M(s) K(s)$, we use that $K(s) \eta_3$ is compactly supported 
and that the integral kernel of the operator $(1-\eta_3) M(s) K(s) \eta_3$  
satisfies
\begin{align*}
(1-\eta_3) M(s) K(s) \eta_3 (\cdot,\cdot) \in \rho_f^s \rho_c^{s-1} (\rho_f' \rho_c')^\infty
\CI(\overline{X} \times \overline{X}, \bundle \boxtimes \bundle')\,.
\end{align*}
This proves the theorem.
\end{proof}

We will now provide formula for $Q(s)$ restricted to the boundary that will be 
useful later on.
Let $\varphi \in \CI(\overline{X}, \bundle)$ such that $\eta_3 \varphi = 0$.
In this case $Q(s) \varphi$ simplifies to
\begin{align*}
Q(s) \varphi &= M(s) K(s) \varphi
\\
&= M(s) (\id - L(s))^{-1} L(s) \varphi\,.
\end{align*}
Using that $L(s) = \eta_3 L(s)$, we obtain
\begin{align*}
L(s) &= \eta_3 (\id - L(s) \eta_3) (\id - L(s) \eta_3)^{-1} L(s)
\\
&= (\id - L(s)) \eta_3 (\id - L(s) \eta_3)^{-1} L(s)\,.
\end{align*}
Hence, we have
\begin{align}\label{eq:Qs_product}
Q(s) \varphi &= M(s) \eta_3 (\id - L(s) \eta_3)^{-1} L(s) \varphi\,.
\end{align}

\subsection{Resolvent at a Resonance}\label{sec:resolvent_at_resonance}

Let $s_0 \in \C$ be a resonance of~$\LapTwist$. As in \cite[Section 6]{DFP}, we 
define the \emph{multiplicity} of the resonance $s_0$ as the number
\begin{align*}
m_{X,\twist}(s_0) \coloneqq \rank \int_{\gamma_{\eps,s_0}} \ResTwist(s) \, ds\,,
\end{align*}
where $\eps > 0$ is chosen such that the path $\gamma_{\eps,s_0}\colon [0,1] 
\to \C$ with 
\[
\gamma_{\eps,s_0}(t) \coloneqq s_0 + \eps e^{2\pi i t}
\]
encloses exactly one resonance (namely $s_0$).
We denote the multiset of resonances by
\begin{align*}
\ResSet_{X,\twist} \coloneqq \{ (s_0, m) \in \C \times \N \colon \text{$s_0$ is
a resonance, $m = m_{X,\twist}(s_0)$}\}
\end{align*}
and the multiset of resonances of the model funnel ends by
\begin{align}\label{eq:resset-ends}
    \ResSet_{X_f,\twist} \coloneqq \bigcup_{j=1}^{n_f} \ResSet_{X_{f,j},\twist_j}\,,
\end{align}
where the multiset $\ResSet_{X_{f,j},\twist_j}$ is given as in~\eqref{eq:resonances_funnel}.

In a small neighborhood of the resonance~$s_0$, the resolvent admits an 
expansion
\begin{align}\label{eq:resolvent-at-resonance}
\ResTwist(s) = \sum_{j=1}^p \frac{A_j(s_0)}{\left(s(1-s) - s_0(1-s_0) \right)^j} + H(s, s_0)\,
\end{align}
for some $p \in \N$, further referred to as the \emph{order} of the resonance, 
where, for $j=1,\ldots, p$, the coefficient~$A_j(s_0)$ is a finite 
rank operator, and the map~$s \mapsto H(s,s_0)$ is holomorphic in a small 
neighborhood of $s_0$.

Now let $s_0 \not = 1/2$ and fix $j=1,\ldots, p$.
We multiply \eqref{eq:resolvent-at-resonance} 
by \[\left( s(1-s) - s_0(1-s_0) \right)^{j-1}\] and integrate both sides along the path ${\gamma_{\eps,s_0}}$.
We substitute $\lambda = s(1-s)$ and use $d\lambda = (1-2s) ds$. The path of the integration changes to 
\[
\tilde{\gamma}_{\eps, s_0}(t) = s_0(1-s_0) + (1-2s_0) \eps e^{2 \pi i t}  + \eps^2 e^{4 \pi i t}\,.
\]
For $s_0\neq 1/2$ and $\eps$ small enough, $\tilde{\gamma}_{\eps, s_0}(t)$ winds around $s_0(1-s_0)$ once. Applying the Cauchy integration formula, we get 
\begin{align}\label{def:Ajs0}
	A_j(s_0) = \frac{1}{2\pi i} \int_{\gamma_{\eps,s_0}} (1 - 2s)  \left( s(1-s) - s_0(1-s_0) \right)^{j-1} \ResTwist(s) \, ds
\end{align}
for any $j=1, \ldots, p$. We note that for $j=1$, the equality \eqref{def:Ajs0} was obtained in the proof of~\cite[Lemma 2.4]{GuZw97}.

Note that \eqref{def:Ajs0} implies that the operator $A_j(s_0)$ is symmetric. Together with  $m_{X,\twist}(s_0) = \rank A_1(s_0)$, this yields
\begin{align}\label{eq:resolvent-A_1}
A_1(s_0) = \sum_{\ell,m=1}^{m_{X,\twist}(s_0)} a_1^{\ell,m}(s_0) \phi_\ell \ang{\phi_m, \cdot}\,,
\end{align}
where $a_1(s_0) = (a_1^{\ell,m}(s_0))_{\ell,m=1}^{m_{X,\twist}(s_0)}$ is a symmetric invertible matrix and \[\phi_j \in \rho^{-N}L^2(X,\bundle)\] for $j=1, \ldots, m_{X,\twist}(s_0)$ and $\Re(s_0) > 1/2 - N$ for any $N \in \N$.
The definition of the resolvent implies that for any $j = 1, \dotsc, p-1$,
\begin{align*}
A_{j+1}(s_0) &= A_j(s_0) (\LapTwist - s_0(1-s_0)) \\ &= (\LapTwist - s_0(1-s_0)) A_j(s_0)\,,
\\
A_{p+1}(s_0) &= 0\,.
\end{align*}
Therefore, we obtain that
\begin{align}\label{eq:resolvent-A_k}
A_k(s_0) = \sum_{\ell,m=1}^{m_{X,\twist}(s_0)} a_k^{\ell,m}(s_0) \phi_\ell \ang{\phi_m, \cdot}\,,
\end{align}
where $a_k(s_0) \coloneqq a_1(s_0) d(s_0)^{k-1}$ for $d(s_0) \coloneqq a_1(s_0)^{-1} a_2(s_0)$. Note that the matrix $d(s_0)$ is nilpotent.

\subsection{Absence of Poles with \texorpdfstring{$\Re s = 1/2$}{Re s = 1/2}}\label{sec:no-resonances}
In this section, we will show that for $s \in \CC$ with $\Re s = 1/2$ there is at most one resonance at ${s=1/2}$. This will imply that there are no eigenvalues larger than $1/4$.

The Carleman estimate~\cite[Theorem~(7)]{Mazzeo91} reads in our setting as 
follows (cf. Borthwick~\cite[Lemma~7.6]{Borthwick_book}).

\begin{prop}\label{prop:carleman-est}
Let $F_\ell \subset C_\ell = \ang{h_\ell} \bs \h$ be a hyperbolic funnel and let $\twist\colon \ang{h_\ell} \to \Unit(V)$ be a unitary finite-dimensional representation. Denote by $\rho_f$ the boundary defining function of $\pa_\infty F_\ell$. Let $r_0, k \ge 0$ and suppose that $u \in \CI(X, \bundle)$ satisfies $u = O(\rho_f^\infty)$ and is supported in $\{r \geq r_0\}$, where $r$ denotes the distance to the geodesic boundary. For $r_0$ and $k$ sufficiently large there exists $C > 0$ independent of $k$ such that
\begin{align*}
    k^3 \int_{F_\ell} e^{2kr} \abs{u}^2 d\mu_{F_\ell} + k \int_{F_\ell} e^{2kr} \abs{\nabla_\twist u}^2 \, d\mu_{F_\ell} \leq C \int_{F_\ell} e^{2kr} \abs{ \Delta_{F_\ell,\twist} u}^2 \, d\mu_{F_\ell}\,.
\end{align*}
\end{prop}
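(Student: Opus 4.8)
The strategy is to reduce the twisted estimate to the scalar one of Mazzeo--Phillips/Borthwick by passing to the finite torsion-free cover $\widetilde{X}$ and exploiting that $\twist$ is unitary, so that pointwise $\abs{u}_{\bundle}$, $\abs{\nabla_\twist u}_{\bundle}$ and $\abs{\Delta_{F_\ell,\twist}u}_{\bundle}$ are expressible in terms of scalar quantities to which the classical Carleman estimate applies. Concretely, choose an orthonormal frame of $\bundle$ along a fundamental domain of the funnel; in this frame a section $u$ is a $\C^{\dim V}$-valued function, the connection $\nabla_\twist$ is the flat (trivial) connection, and $\Delta_{F_\ell,\twist}$ acts diagonally as $\Delta_{F_\ell}\otimes \id_V$ on the components (because the $h_\ell$-action on the $V$-factor is by the constant unitary matrix $\twist(h_\ell)$ and the metric only involves the base). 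Then $\abs{u}_{\bundle}^2=\sum_{i}\abs{u_i}^2$, $\abs{\nabla_\twist u}_{\bundle}^2=\sum_i\abs{\nabla u_i}^2$, and $\abs{\Delta_{F_\ell,\twist}u}_{\bundle}^2=\sum_i\abs{\Delta_{F_\ell}u_i}^2$, so the vector-valued estimate follows by summing the scalar Carleman estimate over $i=1,\dots,\dim V$ with a common constant $C$ and common thresholds $r_0,k_0$.

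The key steps, in order, are: (i) set up the model-funnel coordinates $(r,\phi)$ from Section~\ref{sec:model-funnel}, record the metric $g_{F_\ell}=dr^2+\tfrac{\ell^2}{4\pi^2}\cosh^2 r\,d\phi^2$, the volume form $d\mu_{F_\ell}$, and the expression $\rho_f=\cosh(r)^{-1}$ for the boundary defining function; (ii) trivialize $\bundle$ over the fundamental domain via a unitary frame, reducing to $V$-valued (equivalently finitely many scalar) functions, and verify that the hypotheses transfer: $u=O(\rho_f^\infty)$ componentwise and support in $\{r\ge r_0\}$; (iii) invoke the scalar Carleman estimate. For (iii) I would either cite Mazzeo~\cite[Theorem (7)]{Mazzeo91} or Borthwick~\cite[Lemma 7.6]{Borthwick_book} directly on each component $u_i$, noting that those references prove exactly this weighted estimate for scalar functions on a hyperbolic funnel with the conjugated operator $e^{kr}\Delta_{F_\ell}e^{-kr}$, the extra powers of $k$ coming from the lower-order terms produced by commuting $e^{kr}$ through $\Delta_{F_\ell}$ and the positivity of the resulting first-order term after integration by parts in $r$; (iv) sum over the finitely many components and take the maximum of the finitely many thresholds and the appropriate constant, obtaining the stated $k^3$- and $k^1$-weighted inequality with a single $C$ independent of $k$.

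The main obstacle is not analytic depth but bookkeeping: one must make sure the reduction to the scalar case is clean, i.e.\ that the flat twisted connection $\nabla_\twist$ really is the trivial connection in the chosen frame (so no connection-coefficient cross terms appear in $\abs{\nabla_\twist u}^2$ or in $\Delta_{F_\ell,\twist}$), and that the weight $e^{2kr}$ together with $d\mu_{F_\ell}$ makes all integrals finite under the decay hypothesis $u=O(\rho_f^\infty)=O((\cosh r)^{-\infty})$ — this is where $e^{2kr}$ is beaten by $\rho_f^\infty$ for any fixed $k$, so no convergence issue arises. Since the twist is by a \emph{constant} unitary matrix on the model funnel (the representation is of the cyclic group $\ang{h_\ell}$), this reduction is essentially immediate, and the bulk of the work is simply quoting the scalar result; I would therefore keep the write-up short, stating the frame reduction explicitly and then citing~\cite{Mazzeo91,Borthwick_book} for the componentwise estimate.
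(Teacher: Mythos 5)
The paper gives no proof of this Proposition: it simply states that Mazzeo's scalar Carleman estimate \cite[Theorem~(7)]{Mazzeo91} (cf.\ \cite[Lemma~7.6]{Borthwick_book}) ``reads in our setting as follows'' and records the twisted statement. Your approach --- trivialize the flat unitary bundle, reduce to the scalar estimate, and sum over components --- is the same idea the paper silently invokes, so at the level of strategy you are aligned.

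There is, however, a genuine gap in the way you carry out the reduction. In a unitary frame of $\bundle$ over a fundamental domain $(0,\infty)_r\times(0,2\pi)_\phi$ of the funnel, the section $u$ becomes a $\C^{\dim V}$-valued function $(u_1,\dots,u_{\dim V})$ with the \emph{twisted} periodicity $u(r,\phi+2\pi)=\twist(h_\ell)\,u(r,\phi)$. The individual components $u_i$ are therefore \emph{not} $2\pi$-periodic in $\phi$ and do not descend to functions on $F_\ell$, so you cannot literally cite the scalar Carleman estimate of Mazzeo or Borthwick on each $u_i$. What is true is that the \emph{proof} of the scalar estimate carries over verbatim to this twisted setting: the commutator computation $e^{kr}\Delta_{F_\ell}e^{-kr}=\Delta_{F_\ell}-2k\partial_r+k^2+\cdots$ is identical componentwise (the flat connection is trivial in the local frame, so $\Delta_{F_\ell,\twist}$ acts diagonally), and every integration by parts in $\phi$ produces boundary terms at $\phi=0,2\pi$ involving only the gauge-invariant densities $\abs{u}_\bundle^2$, $(\nabla_\twist u,u)_\bundle$, $(\Delta_{F_\ell,\twist}u,u)_\bundle$, etc., which \emph{are} $2\pi$-periodic precisely because $\twist(h_\ell)$ is unitary. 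So the correct formulation is ``the positive commutator argument goes through for twisted sections because all relevant pointwise quantities are gauge-invariant and integration by parts incurs no new boundary terms,'' not ``apply the scalar estimate to each component.''

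Two further minor points. First, the appeal to the finite torsion-free cover $\widetilde X$ is a red herring for the model funnel: $\group$ is replaced by $\ang{h_\ell}\cong\Z$, which is already torsion-free, and a finite cover of $C_\ell$ does not in general trivialize $\bundle$ (the eigenvalues $e^{2\pi i\vartheta_j}$ of $\twist(h_\ell)$ need not be roots of unity). Second, your convergence remark is correct for the terms actually appearing: the decay $u=O(\rho_f^\infty)=O((\cosh r)^{-\infty})$ dominates $e^{2kr}$ for each fixed $k$, and the support condition $\{r\ge r_0\}$ kills the compact-boundary contributions, so all three integrals are finite -- this is worth keeping.
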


The Carleman estimate implies the following result on unique continuations
(see~\cite[Proposition~7.4]{Borthwick_book} for the untwisted case).

\begin{prop}\label{prop:unique-cont}
Let $X = \group \bs \h$ be an infinite-volume hyperbolic surface and 
$\twist\colon \group \to \Unit(V)$ be a unitary finite-dimensional 
representation.
Suppose that $u \in \CI(X, \bundle)$ is a solution of $(\LapTwist - s(1-s)) u = 0$ for some $s \not \in -\N_0/2$. If
\begin{align}\label{prop:4.3:equ}
u|_{X_{f,j}} \in \rho_f^{s+1} \CI(\overline{X_{f,j}}, \bundle)
\end{align}
for some $j=1, \ldots, n_f$, then $u \equiv 0$.
\end{prop}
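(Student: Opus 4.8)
The statement is a unique continuation result in the funnel direction, and the natural strategy is to feed the Carleman estimate of Proposition~\ref{prop:carleman-est} with the solution $u$ after a suitable cutoff, and then let the Carleman parameter $k$ go to infinity. First I would localize: pick a funnel end $X_{f,j}$ on which $u$ has the prescribed decay $u|_{X_{f,j}} \in \rho_f^{s+1}\CI(\overline{X_{f,j}},\bundle)$. Since $\rho_f = \cosh(r)^{-1} \sim 2e^{-r}$, the hypothesis says $u = O(e^{-(\Re s + 1)r})$ together with all derivatives, in particular $u$ vanishes to infinite order relative to any fixed polynomial weight $e^{Nr}$ only if $\Re s + 1$ were infinite — so the key point is that $u$ is \emph{not} yet known to be $O(\rho_f^\infty)$, and the Carleman estimate as stated requires exactly that. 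Hence the first real step is a bootstrap: using the explicit form of the funnel Laplacian~\eqref{eq:funnellaplace_rho} and the Fourier expansion in $\phi$ (diagonalizing $\twist(h_\ell)$ as in Section~\ref{sec:model-funnel}), the equation $(\Delta_{F_\ell,\twist} - s(1-s))u = 0$ becomes, Fourier mode by Fourier mode, a second-order ODE in $\rho$ (or $r$) of hypergeometric type whose two indicial solutions behave like $\rho_f^s$ and $\rho_f^{1-s}$; an incoming solution with a $\rho_f^{s+1}$-head must have vanishing $\rho_f^{1-s}$-component, and then by the recursion for the formal power series in $\rho_f$ (the same recursion underlying~\eqref{eq:poisson-asymptotics-funnel}) the solution is forced to be $O(\rho_f^\infty)$, i.e.\ flat at $\pa_\infty F_{\ell}$. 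This is where the hypothesis $s \notin -\N_0/2$ enters, to avoid resonant values of the indicial roots where the two formal series interact (logarithmic terms) or the recursion degenerates.

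Once $u|_{X_{f,j}} = O(\rho_f^\infty)$ is established, the second step is the Carleman argument proper. Choose $r_0$ large (as allowed by the proposition) so that $\{r \ge r_0\} \subset X_{f,j}$, let $\chi$ be a smooth cutoff equal to $1$ on $\{r \ge r_0 + 1\}$ and supported in $\{r \ge r_0\}$, and apply Proposition~\ref{prop:carleman-est} to $v \coloneqq \chi u$. Since $(\LapTwist - s(1-s))u = 0$, we have $\Delta_{F_\ell,\twist} v = [\Delta_{F_\ell,\twist},\chi]u + s(1-s)v$, and the commutator $[\Delta_{F_\ell,\twist},\chi]$ is a first-order operator supported in the compact collar $\{r_0 \le r \le r_0+1\}$. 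Plugging into the Carleman inequality and estimating the right-hand side,
\begin{align*}
k^3 \int_{F_\ell} e^{2kr}\abs{v}^2\,d\mu_{F_\ell} \le C\int_{F_\ell} e^{2kr}\bigl(\abs{[\Delta_{F_\ell,\twist},\chi]u}^2 + \abs{s(1-s)}^2\abs{v}^2\bigr)\,d\mu_{F_\ell}\,.
\end{align*}
For $k$ large the term $C\abs{s(1-s)}^2\int e^{2kr}\abs{v}^2$ is absorbed into the left-hand side (it is $k^3$ versus $k^0$), leaving
\begin{align*}
k^3 \int_{F_\ell} e^{2kr}\abs{v}^2\,d\mu_{F_\ell} \le 2C\int_{r_0 \le r \le r_0+1} e^{2kr}\abs{[\Delta_{F_\ell,\twist},\chi]u}^2\,d\mu_{F_\ell} \le 2C\, e^{2k(r_0+1)} M\,,
\end{align*}
where $M$ bounds $\abs{[\Delta_{F_\ell,\twist},\chi]u}^2$ (times the measure density) on the collar. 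On the other hand, restricting the left-hand integral to $\{r \ge r_0 + 2\}$ gives a lower bound $\ge k^3 e^{2k(r_0+2)}\int_{r \ge r_0+2}\abs{u}^2\,d\mu_{F_\ell}$. Combining,
\begin{align*}
k^3 e^{2k(r_0+2)}\int_{r\ge r_0+2}\abs{u}^2\,d\mu_{F_\ell} \le 2C\,M\,e^{2k(r_0+1)}\,,
\end{align*}
so $\int_{r \ge r_0+2}\abs{u}^2\,d\mu_{F_\ell} \le 2CM\,k^{-3}e^{-2k} \to 0$ as $k \to \infty$; hence $u \equiv 0$ on $\{r \ge r_0 + 2\}$.

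The third step is to propagate this vanishing to all of $X$: $u$ solves an elliptic equation $(\LapTwist - s(1-s))u = 0$ on the connected real-analytic (orbifold) manifold $X$ and vanishes on the open set $\{r > r_0 + 2\} \subset X_{f,j}$, so by the standard (scalar, applied componentwise after passing to the torsion-free finite cover $\widetilde X$ and trivializing the bundle locally) unique continuation principle for second-order elliptic operators with analytic coefficients — or equivalently Aronszajn's theorem — $u$ vanishes identically on $\widetilde X$ and hence on $X$. I would cite the analogue in Borthwick~\cite[Proposition~7.4]{Borthwick_book} for the structure of this last step.

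\textbf{Main obstacle.} The genuinely substantive point is the first step, upgrading the finite-order decay $u \in \rho_f^{s+1}\CI$ to infinite-order flatness $u = O(\rho_f^\infty)$, since the Carleman estimate is only stated under the latter hypothesis. This requires analyzing the indicial/recursion structure of the funnel ODE carefully enough to see that a solution whose expansion starts at $\rho_f^{s+1}$ has trivial $\rho_f^{1-s}$-series and therefore a trivial tail — and controlling the $s \notin -\N_0/2$ exceptional set where indicial roots collide. Everything after that is a fairly mechanical cutoff-and-limit argument with the Carleman weight plus a black-box appeal to elliptic unique continuation.
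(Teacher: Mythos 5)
Your proof follows the same high-level structure as the paper's: upgrade the hypothesis $u|_{X_{f,j}} \in \rho_f^{s+1}\CI(\overline{X_{f,j}},\bundle)$ to infinite-order flatness $u|_{X_{f,j}} \in \rho_f^\infty\CI(\overline{X_{f,j}},\bundle)$, then apply the Carleman estimate of Proposition~\ref{prop:carleman-est} to a cutoff of $u$ and let $k\to\infty$ to conclude $u=0$ near $\pa_\infty X_{f,j}$, and finally invoke unique continuation for elliptic operators to propagate vanishing to all of $X$. Your Carleman argument --- absorbing the $\abs{s(1-s)}^2$ term for large $k$, bounding the commutator contribution on the collar, and playing off the weight $e^{2k(r_0+1)}$ against $e^{2k(r_0+2)}$ --- is the same manipulation as the paper's estimate on $I_1$, $I_2$, $I_3$, just assembled a bit more directly. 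Your Step 3 likewise matches.

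The one genuinely different ingredient is Step 1. The paper does a short direct induction: writing $u|_{X_f} = \rho_f^{s+n}v$ with $v\in\CI(\overline{X_f},\bundle)$ and using the expression~\eqref{eq:funnellaplace_rho} for the funnel Laplacian, it reads off
\[
(\LapTwist - s(1-s))\,\rho_f^{s+n}v \;=\; n(1-2s-n)\,\rho_f^{s+n}v + O(\rho_f^{s+n+1})\,,
\]
so the equation forces $v=O(\rho_f)$ because $n(1-2s-n)\neq 0$ for every $n\in\N$ precisely when $s\notin -\N_0/2$. You instead propose a Fourier decomposition in $\phi$ into a family of second-order ODEs in $\rho_f$ followed by a Frobenius/indicial-root analysis. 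Both routes are sound, and they use the hypothesis $s\notin -\N_0/2$ in essentially the same place: in the paper it is the non-vanishing of the indicial factor $n(1-2s-n)$, and in your version it is what prevents $\rho_f^{1-s} = \rho_f^{s+1}\cdot \rho_f^{-2s}$ from hiding inside $\rho_f^{s+1}\CI$ and what keeps the Frobenius two-step recursion non-resonant. The paper's induction is leaner: it uses only the leading indicial coefficient of the operator and needs no separation of variables nor handling of logarithmic Frobenius branches.

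One observation worth recording: pushed to its conclusion, your Frobenius analysis proves strictly more than flatness. Each Fourier-mode ODE is second order with a regular singular point at $\rho_f=0$, so its solution space on $\rho_f\in(0,1)$ is two-dimensional and spanned by the two Frobenius branches; a mode solution whose complete asymptotic expansion at $\rho_f=0$ vanishes is therefore identically zero on the funnel. If you carried that out mode by mode you would conclude $u|_{X_{f,j}}\equiv 0$ directly, making your Step 2 (the Carleman estimate) unnecessary for this proposition and letting you pass straight to Step 3. The paper's Carleman route is chosen because it is more robust: it requires only the crude inductive computation and not the full separated-ODE structure, and it parallels the untwisted proof in \cite[Proposition~7.4]{Borthwick_book}, which the paper is deliberately adapting.
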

We adapt the proof of \cite[Proposition~7.4]{Borthwick_book} to the twisted case.
\begin{proof}[Proof of Proposition~\ref{prop:unique-cont}]
    Without loss of generality, we assume that $X$ has only one funnel end, that is $n_f = 1$ and  $X_f = X_{f,1}$.
    We prove the proposition in two steps.
    
    \paragraph{\textbf{Step 1:}} We want to show by induction  that
    \[u|_{X_f} \in \rho_f^{s+n} \CI(\overline{X_f}, \bundle), \quad \forall n \in \N.\] 
    The base case is true by \eqref{prop:4.3:equ}.  Suppose that $u|_{X_f} \in 
\rho_f^{s+n} \CI(\overline{X_f}, \bundle)$ for some $n \in \N$.
    Write $u|_{X_f} = \rho_f^{s+n} v$, where $v \in \CI(\overline{X_f}, \bundle)$.
    Using \eqref{eq:funnellaplace_rho} we obtain
    \begin{align*}
        (\LapTwist - s(1-s)) \rho_f^{s+n} v = n(1 - 2s - n) \rho_f^{s+n} v + O(\rho_f^{s+n+1})\,.
    \end{align*}
    Since $u$ solves $(\LapTwist - s(1-s)) u = 0$, it follows that $v = 
O(\rho_f)$ under the assumption that $s\not \in -\N_0/2$. 
    Therefore,
    \begin{align*}
        u|_{X_f} \in \rho_f^{s+n+1} \CI(\overline{X_f}, \bundle)\,.
    \end{align*}
    By induction, we obtain that $u|_{X_f} \in \rho_f^\infty \CI(\overline{X_f}, \bundle)$.
    \paragraph{\textbf{Step 2:} We want to show that $u \equiv 0$.}
    Choose $r_0, r_1 \in (0,1)$ with $r_1 > r_0$  and choose $\eta \in \CI([0,1])$ such that $\eta(r) = 1$ for $r \leq r_0$ and $\eta(r) = 0$ for $r \geq r_1$.
    The function $ \eta(\rho_f) u|_{X_f}$  satisfies the assumptions of 
Proposition~\ref{prop:carleman-est}. We hence obtain 
    \begin{align*}
        k^3 \int_{X_f} \rho_f^{-2k} \abs{\eta(\rho_f)}^2 \abs{u}^2 \,d\mu_X \leq C \int_{X_f} \rho_f^{-2k} \abs{ \LapTwist \eta(\rho_f) u}^2 \, d\mu_X
    \end{align*}
    for $k > 0$ large enough. 
        Denote 
    \[
    I_1 \coloneqq (1 + \abs{s(1-s)}^2) \int_{X_f \cap \{\rho_f \leq r_0\}}\rho_f^{-2k}\abs{u}^2 \,d\mu_X\,.
    \]
        Using the equation $(\LapTwist - s(1-s)) u = 0$ and the fact that 
$\eta(r) = 1$ for $r \leq r_0$, we obtain
    \begin{align*}
         \frac{I_1 \cdot  k^3 }{1+|s(1-s)|^2} &=  k^3 \int_{X_f \cap \{\rho_f \leq r_0\}} \rho_f^{-2k} \abs{u}^2 \,d\mu_X \\
        &\leq k^3 \int_{X_f} \rho_f^{-2k} \abs{\eta(\rho_f)}^2 \abs{u}^2 \,d\mu_X \\
        &\leq C \int_{X_f} \rho_f^{-2k} \abs{ \LapTwist \eta(\rho_f) u}^2 \, d\mu_X \\
        &= C    \int_{X_f \cap \{r_0 \leq \rho_f \leq r_1\}}   \rho_f^{-2k} \abs{ \LapTwist \eta(\rho_f) u}^2 \, d\mu_X \\
        &\quad +  C \int_{X_f \cap \{\rho_f \leq r_0\}}  \rho_f^{-2k} \abs{ 
\LapTwist \eta(\rho_f) u}^2 \, d\mu_X \\
        &\leq C \left( I_2 + I_3+ I_1 \right)\,,
    \end{align*}
    where 
    $C = C(r_0, r_1, \eta) > 0$ and
    \begin{align*}
        I_2 &\coloneqq (1 + \abs{s(1-s)}^2) \int_{X_f \cap \{r_0 \leq \rho_f 
\leq r_1\}}\rho_f^{-2k}\abs{u}^2 \,d\mu_X\,,\\
        I_3 &\coloneqq \int_{X_f \cap \{r_0 \leq \rho_f \leq r_1\}} 
\rho_f^{-2k} \abs{\nabla_{X,\twist} u}^2 \,d\mu_X\,.
    \end{align*}
    Setting $C' = (1 + \abs{s (1-s)}^2)^{-1} C^{-1}$, we  rewrite the above estimate as
    \begin{align*}
        I_1 \leq (C' k^3 - 1)^{-1} ( I_2 + I_3 ) \,.
    \end{align*}
    We  estimate $I_2$ and $I_3$ by
    \begin{align*}
        I_2 + I_3 &\leq C'' \int_{r_0}^{r_1} \rho^{-2k-1} \,d\rho \\
        &= \frac{ C''}{2k r_0^{2k}} \left(1 - \left(\frac{r_1}{r_0}\right)^{-2k} \right) \,,
    \end{align*}
    for some $C'' > 0$, which depends on $r_0, r_1, s$, and $u$, but is independent of $k$.
    Therefore we arrive at
    \begin{align*}
        \int_{X_f \cap \{\rho_f \leq r_0\}}\abs{u}^2 \,d\mu_X &\leq \frac{r_0^{2k}}{1 + \abs{s (1-s)}^2} I_1 \\
        &\leq \frac{C'' \left(1 - \left(\frac{r_1}{r_0}\right)^{-2k} \right)}{2k(1 + \abs{s (1-s)}^2)(C' k^3 - 1)}  \,.
    \end{align*}
    Letting $k \to \infty$, we obtain that $\norm{u}_{L^2(X_f \cap \{\rho_f \leq r_0\}, \bundle)} = 0$ and consequently $u = 0$ on $X_f \cap \{\rho_f \leq r_0\}$.
    By standard uniqueness results of elliptic differential operators, we conclude that $u = 0$ everywhere.
\end{proof}

In the case $\Re s = 1/2$ and $s \not = 1/2$, we can prove a better result following \cite[Lemma~7.7]{Borthwick_book}.

\begin{prop}\label{prop:unique-cont-1/2}
Let $X$ and $\twist$ be as above, and let $\Re s = 1/2$ with $s \not = 1/2$. If $u \in \CI(X, \bundle)$ satisfies $u|_{X_{f,j}} \in \rho_f^s \CI(X_{f,j}, \bundle)$ for some $j\in\{1, \ldots, n_f\}$ and
\begin{align*}
(\LapTwist - s(1-s)) u = 0\,,
\end{align*}
then $u \equiv 0$.
\end{prop}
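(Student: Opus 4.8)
The plan is to deduce the claim from Proposition~\ref{prop:unique-cont} by improving the decay of $u$ in the funnel $X_{f,j}$ from order $\rho_f^s$ to order $\rho_f^{s+1}$. Since $\Re s = \tfrac12 > 0$ we have $s \notin -\N_0/2$, so Proposition~\ref{prop:unique-cont} applies and will finish the argument as soon as $u|_{X_{f,j}} \in \rho_f^{s+1}\CI(\overline{X_{f,j}},\bundle)$. Write $u|_{X_{f,j}} = \rho_f^s w$ with $w$ smooth up to $\rho_f = 0$ and put $w_0 \coloneqq w|_{\rho_f = 0}$. Substituting $\rho_f^s w$ into the funnel Laplacian~\eqref{eq:funnellaplace_rho} and expanding $w$ in its Taylor series at $\rho_f = 0$, the coefficient of $\rho_f^s$ cancels identically (it is the indicial equation with root $s$), while comparing the coefficients of $\rho_f^{s+m}$ for $m \geq 1$ shows — using $m(1-2s-m) \neq 0$, which holds because $\Re(1-2s-m) = -m < 0$ — that all Taylor coefficients of $w$ are determined by $w_0$, the odd ones vanishing. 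In particular $w_0 = 0$ already gives $u|_{X_{f,j}} \in \rho_f^{s+1}\CI(\overline{X_{f,j}},\bundle)$, so it remains to prove $w_0 = 0$.

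For this we run a boundary pairing argument in the spirit of~\cite[Lemma~7.7]{Borthwick_book}. Because $\Re s = \tfrac12$, the spectral parameter $s(1-s) = \tfrac14 + \abs{\Im s}^2$ is real, so $(\LapTwist u, u)_{\bundle} = s(1-s)\abs{u}^2 = (u, \LapTwist u)_{\bundle}$ pointwise. For $R > 0$ let $X_R \subset X$ be the compact region obtained by deleting from each funnel and each cusp end the part where the corresponding boundary defining function is smaller than $e^{-R}$. Green's identity then reads
\begin{align*}
0 = \int_{X_R}\bigl[(\LapTwist u, u)_{\bundle} - (u, \LapTwist u)_{\bundle}\bigr] d\mu_X = -\int_{\partial X_R}\bigl[(\partial_\nu u, u)_{\bundle} - (u, \partial_\nu u)_{\bundle}\bigr] d\sigma = -\sum_e \mathcal{F}_e(R)\,,
\end{align*}
where the sum runs over the funnel and cusp ends and $\mathcal{F}_e(R) \coloneqq 2i \int \Im\,(\partial_\nu u, u)_{\bundle}\, d\sigma$ is the flux of $u$ through the part of $\partial X_R$ in the end~$e$. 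Inserting the Fourier expansions of the model solutions from Section~\ref{sec:scattering_matrix_for_model} and observing that the oscillatory cross terms (of order $\rho^{\pm 2 i \Im s}$) contribute only to the real part of $(\partial_\nu u, u)_{\bundle}$, one checks that each $\mathcal{F}_e(R)$ converges as $R \to \infty$; for the distinguished funnel $X_{f,j}$, where the $\rho_f^{1-s}$-term of $u$ is absent by hypothesis, the limit equals $c_s\,\norm{w_0}_{L^2(\pa_\infty X_{f,j},\bundle)}^2$ with a constant $c_s$ depending only on $\Im s$ and nonzero precisely because $s \neq \tfrac12$.

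Computing the analogous limits in the remaining ends — a smooth global solution being governed there by its incoming and outgoing data, together with, in the cusps, exponentially small and hence flux-free nonzero Fourier modes — and plugging everything into $\sum_e \mathcal{F}_e = 0$ then forces $c_s\,\norm{w_0}^2 = 0$, so $w_0 = 0$; by the first paragraph, $u|_{X_{f,j}} \in \rho_f^{s+1}\CI(\overline{X_{f,j}},\bundle)$, and Proposition~\ref{prop:unique-cont} yields $u \equiv 0$. I expect the delicate point to be this last step: expanding the boundary integrands in every end, separating in a cusp the $0$-th Fourier mode — which carries genuine scattering data — from the higher modes, and extracting from the vanishing of the total flux the sign (positivity) information that pins down $\norm{w_0}$.
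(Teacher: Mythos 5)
Your overall strategy is the right one and matches the paper's: bootstrap $u|_{X_{f,j}}$ from $\rho_f^s\CI$ to $\rho_f^{s+1}\CI$, then invoke Proposition~\ref{prop:unique-cont}. You also correctly identify that the target is $w_0 \coloneqq \bigl(\rho_f^{-s}u\bigr)\big|_{\rho_f=0}$, and the observation that the oscillatory cross terms of size $\rho_f^{\pm 2i\Im s}$ are purely real (a short computation confirms that $(1-s)\bar a b\,\rho^{1-2s} + s a\bar b\,\rho^{2s-1}$ has zero imaginary part when $\Re s = 1/2$) is a genuine and necessary step. The indicial-equation discussion in your first paragraph is correct but not needed: once $w$ is smooth up to $\rho_f=0$, $w_0 = 0$ immediately gives $w\in\rho_f\,\CI$ by Taylor's theorem; you do not need to solve for all Taylor coefficients.

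The gap is in the last step, and you half-suspect it yourself. The global identity $\sum_e\mathcal{F}_e = 0$ by itself is flux conservation and says nothing about the sign of any individual $\mathcal{F}_e$. In a funnel end with generalized asymptotics $u\sim a\,\rho_f^{1-s}+b\,\rho_f^{s}$, the flux is proportional to $\Im s\,(\norm{b}^2-\norm{a}^2)$; in a cusp the scattering part of the zero Fourier mode, $u_0 = a\,y^s + b\,y^{1-s}$, carries the analogous flux $\propto\Im s\,(\norm{a}^2-\norm{b}^2)$ — note this is not ``flux-free'': what decays exponentially are the higher Fourier modes, not the scattering data. If you only assume $a=0$ in the distinguished funnel $X_{f,j}$, then $\mathcal{F}_j \propto \Im s\,\norm{w_0}^2$ has a definite sign, but the contributions from the other ends can be of either sign, and the balance equation relates $\norm{w_0}^2$ to the others instead of forcing it to vanish. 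To close the argument by a global flux count you would need to establish that $u$ contains no ``incoming'' term at \emph{every} end, i.e.\ $u\in\rho_f^s\rho_c^{s-1}\CI(\overline{X},\bundle)$ globally, so that every $\mathcal{F}_e$ has the same sign. The paper instead proves the commutator identity
\[
\int_X\bigl([\LapTwist,\psi_\eps\,\id_{\bundle}]u,u\bigr)_\bundle\,d\mu_X = 0
\]
with a cutoff $\psi_\eps$ that is identically $1$ off $X_f$ and vanishes near $\rho_f=0$, localizing the integrand to a shrinking annulus $\{\eps\leq\rho_f\leq 2\eps\}$ inside the distinguished funnel; pushing $\eps\to 0$ produces $(1-2s)\frac{\ell}{2\pi}\int_0^{2\pi}\abs{v(0,\phi)}^2\,d\phi$ directly, with no reference to the flux at the other ends. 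This localization is precisely what is missing in your argument, and you should either import it or invoke the full $\rho_f^s\rho_c^{s-1}$ asymptotics (available in the actual application to Corollary~\ref{cor:no-poles}) to pin down the signs.
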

\begin{proof}
    Without loss of generality, we may suppose that $n_f=1$ and $X_f = X_{f,1}$.
    We take local coordinates $(r, \phi) \in \R_+ \times \R / 2\pi \Z \cong X_f$.
    We have that $u(r, \phi + 2\pi) = (\twist(h_\ell) u)(r,\phi)$, where $h_\ell \in \group$ is the unique (up to inversion)
    hyperbolic element associated to the funnel end $X_f$ and $\ell \in (0, \infty)$ is the length of the central geodesic of $X_f$ (see \cite[Section 3.2.3]{DFP} for details).
   
    Let $\eps > 0$ and let $\psi \in \CI(\R_+)$ be real-valued with $\psi(t) = 0$ for $t \leq 1$ and $\psi(t) = 1$ for $t \geq 2$.
    Set $\psi_\eps \in \CI(\overline{X})$ with $\psi_\eps(\rho,\theta) = \psi(\rho/\eps)$ for $(\rho,\theta) \in X_f$ and $\psi_\eps = 1$ on $X \setminus X_f$.
    Since $\Re s = 1/2$, we have that $s(1-s) \in \R$ and thus
    \begin{align*}
        0 &= \int_X \left( \overline{s(1-s)} (\psi_\eps u, u)_{\bundle} - s(1-s) (u, \psi_\eps u)_{\bundle} \right) \,d\mu_X \\
        &= \int_X ( [\LapTwist, \psi_\eps \cdot \id_{\bundle}] u, u)_{\bundle} \, d\mu_X \\
        &= \int_{X_f} ( [\LapTwist, \psi_\eps \cdot \id_{\bundle}] u, u)_{\bundle} \, d\mu_X \,.
    \end{align*}
    The function $u$ can be written as $u = \rho^s v$, where $v \in \CI(\overline{X_f}, \bundle)$.
    By assumption, we have that $\Re s = 1/2$, therefore $\abs{u}^2 = \rho \abs{v}^2$.
    Writing $x = \rho/\eps$, we obtain, using \eqref{eq:funnellaplace_rho}, 
that
    \begin{align*}
        ([\LapTwist, \psi_\eps \cdot \id_{\bundle}] u, u)_{\bundle} = - \eps x^2 (x \psi''(x) + 2s \psi'(x) )\abs{v(0, \phi)}^2 + O(\eps^2)\,.
    \end{align*}
    as $\eps \to 0$.
    It follows from \eqref{eq:funnelmetric_rho}, that the measure $d\mu_X$ restricted to $X_f$ is given by
    \begin{align*}
        d\mu_X|_{X_f} &= \rho^{-2} \frac{\ell}{2\pi} \frac{d\rho\,d\phi}{\sqrt{1 - \rho^2}} \\
        &= \eps^{-1} x^{-2} \frac{\ell}{2\pi} \frac{dx\,d\phi}{\sqrt{1 - \eps^2 x^2}}\,.
    \end{align*}
    Therefore we have, as $\eps\to 0$, that
    \begin{align*}
        \int_{X_f} ( [\LapTwist, \psi_\eps & \cdot \id_{\bundle}] u, 
u)_{\bundle} \, d\mu_X 
        \\ & = -\frac{\ell}{2\pi} \int_1^2 \int_0^{2\pi} (x \psi''(x) + 2s 
\psi'(x) )\abs{v(0, \phi)}^2 \,dx \,d\phi + O(\eps)\,.
    \end{align*}
    We calculate that $\int_1^2 \psi'(x) = 1$ and $\int_1^2 x \psi''(x) \,dx 
= -1$ and therefore
    \begin{align*}
        \int_{X_f} ( [\LapTwist, \psi_\eps \cdot \id_{\bundle}] u, u)_{\bundle} \, d\mu_X &= (1 - 2s) \frac{\ell}{2\pi} \int_0^{2\pi} \abs{v(0, \phi)}^2 \,d\phi + O(\eps)
    \end{align*}
    as $\eps \to 0$.
    This implies that $u|_{X_f} \in \rho_f^{s+1} \CI(\overline{X_f}, \bundle)$. 
    Together with Proposition~\ref{prop:unique-cont} this implies the claim.
\end{proof}

Proposition~\ref{prop:unique-cont-1/2} implies
almost no resonances of the critial line.

\begin{cor}\label{cor:no-poles}
For $\Re s = 1/2$ and $s \not = 1/2$, the resolvent $\ResTwist$ has no pole at 
$s$.
\end{cor}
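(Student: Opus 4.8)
The plan is to argue by contradiction. Suppose $s_0\in\C$ with $\Re s_0=1/2$ and $s_0\neq1/2$ were a pole of $\ResTwist$, of order $p$, so that~\eqref{eq:resolvent-at-resonance} holds with $A_p(s_0)\neq0$. From the top Laurent coefficient $A_p(s_0)$ I would produce a nonzero $u\in\CI(X,\bundle)$ solving $(\LapTwist-s_0(1-s_0))u=0$ whose restriction to each funnel end lies, near the boundary at infinity, in $\rho_f^{s_0}\CI$, and then invoke Proposition~\ref{prop:unique-cont-1/2} to force $u\equiv0$, contradicting $u\neq0$.

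The first step is to localize the pole. Since $\twist(h_\ell)$ is unitary, each of its eigenvalues $\lambda$ satisfies $\abs{\lambda}=1$, so $\log\lambda\in i\R$; hence by~\eqref{eq:resonances_funnel} every funnel resonance has real part in $-(1+2\N_0)$, in particular $\Re\leq-1$, and $s_0$ is not one of them. By Section~\ref{sec:parab_cyl} the only pole of $M_c$ is $s=1/2\neq s_0$. Thus Theorem~\ref{thm:resolvent} applies on a punctured neighbourhood of $s_0$, with $M_f,M_c$ holomorphic there, so the negative-order Laurent coefficients of $\ResTwist$ at $s_0$ coincide with those of $\tilde M_i(s)+Q(s)$. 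Computing $A_p(s_0)$ from~\eqref{def:Ajs0} with $j=p$ and using that the prefactor $(1-2s)(s(1-s)-s_0(1-s_0))^{p-1}$ is holomorphic at $s_0\neq1/2$ and vanishes there to order exactly $p-1$, one finds that $A_p(s_0)$ equals, up to the nonzero scalar $(1-2s_0)^p$, the top $s$-Laurent coefficient of $\tilde M_i(s)+Q(s)$ at $s_0$.

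Next, fix $f\in\CcI(X,\bundle)$ with $u:=A_p(s_0)f\neq0$; this is possible since, by~\eqref{eq:resolvent-A_k}, $A_p(s_0)=\sum a_p^{\ell,m}\phi_\ell\ang{\phi_m,\cdot}$ with the $\phi_\ell$ linearly independent, so $A_p(s_0)\neq0$ precludes the relevant combinations of the $\phi_m\in\rho^{-N}L^2$ from annihilating $\CcI(X,\bundle)$. By the relations following~\eqref{eq:resolvent-A_1}, $(\LapTwist-s_0(1-s_0))A_p(s_0)=A_{p+1}(s_0)=0$, so $u$ solves the eigenvalue equation and is smooth by elliptic regularity. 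For the funnel asymptotics: the compactly supported pseudodifferential operator $\tilde M_i$ contributes a compactly supported smooth section, hence nothing near any funnel end; and near infinity in a funnel end $X_{f,j}$, where $\tilde M_i$ and $M_c$ vanish and $M_f$ is holomorphic, $\ResTwist(s)f$ agrees with $M_f(s)f+Q(s)f$, so the $s$-pole of $Q(s)f=\rho_f^s\rho_c^{s-1}g(s,\cdot)$ there has order $\leq p$, with $g(s,\cdot)$ smooth up to the boundary. Writing $\rho_f^s\rho_c^{s-1}=\rho_f^{s_0}\rho_c^{s_0-1}\exp\!\bigl((s-s_0)\log(\rho_f\rho_c)\bigr)$ and expanding, the residue at $s_0$ of $(s-s_0)^{p-1}\rho_f^s\rho_c^{s-1}g(s,\cdot)$ equals $\rho_f^{s_0}\rho_c^{s_0-1}$ times the order-$p$ Laurent coefficient of $g$: the $\log(\rho_f\rho_c)$-terms multiply Laurent coefficients of $g$ of order $>p$, which vanish. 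Hence near infinity $u$ equals $\rho_f^{s_0}\rho_c^{s_0-1}$ times a section smooth up to the boundary, i.e. $u|_{X_{f,j}}\in\rho_f^{s_0}\CI(\overline{X_{f,j}},\bundle)$, since $\rho_c$ is smooth and nonvanishing on a funnel.

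Proposition~\ref{prop:unique-cont-1/2} now yields $u\equiv0$, the desired contradiction. I expect the delicate point to be the asymptotic computation ruling out logarithmic corrections to $u$ at the funnel; this is exactly why one must use the \emph{top} Laurent coefficient $A_p(s_0)$ — for which the $\log$-terms disappear and which moreover lies in the kernel of $\LapTwist-s_0(1-s_0)$ — rather than, say, the residue $A_1(s_0)$, which for $p\geq2$ in general neither solves the eigenvalue equation nor admits a log-free funnel expansion. Localizing the pole away from $M_f$ and $M_c$, elliptic regularity, and the existence of $f$ with $A_p(s_0)f\neq0$ are routine.
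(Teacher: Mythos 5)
Your proposal is correct and follows essentially the same route as the paper: take $u = A_p(s_0)\psi$, observe $(\LapTwist - s_0(1-s_0))u=0$, show $u|_{X_{f,j}}\in\rho_f^{s_0}\CI(\overline{X_{f,j}},\bundle)$ via Theorem~\ref{thm:resolvent}, and invoke Proposition~\ref{prop:unique-cont-1/2} to conclude $u\equiv 0$ and hence $A_p(s_0)=0$. The one place where you go beyond the paper is that you carefully justify the claimed funnel asymptotics for $A_p(s_0)\psi$: the paper simply asserts ``by Theorem~\ref{thm:resolvent}'' that $u\in\rho_f^{s_0}\rho_c^{s_0-1}\CI(\overline{X},\bundle)$, whereas you explain why the leading Laurent coefficient of $Q(s)\psi$ is free of $\log(\rho_f\rho_c)$-corrections (because $Q(s)\psi$ has a pole of order at most $p$ near the funnel, the log-terms couple only to higher-order Laurent coefficients which vanish). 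That supplementary computation — and your remark that this is precisely why one must use $A_p$ rather than $A_1$ when $p\geq 2$ — is a worthwhile point of rigor, but conceptually the proof is identical to the paper's.
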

\begin{proof}
    By \eqref{eq:resolvent-at-resonance}, we have that
    \begin{align*}
        \ResTwist(s) = \sum_{j=1}^p \frac{A_j(s_0)}{\left(s(1-s) - s_0(1-s_0)\right)^j} + H(s, s_0)\,,
    \end{align*}
    where $p\in \N$ is the order of the resonance, $A_j(s_0)$, $j = 1, \ldots, p$ are finite rank operators and $H(s,s_0)$ is holomorphic in $s$ near $s = s_0$.
    Let $\psi \in \CcI(X,\bundle)$ and write $u = A_p(s_0) \psi$. 
    By the definition of the resolvent, we have that
    \begin{align*}
        (\LapTwist - s_0(1-s_0)) u = 0\,.
    \end{align*}
    By Theorem~\ref{thm:resolvent}, we have that $u \in \rho_f^{s_0} \rho_c^{s_0-1} \CI(\overline{X},\bundle)$.
    For $\Re s_0 = 1/2$ and $s \not = 1/2$, Proposition~\ref{prop:unique-cont-1/2} implies that $u = 0$
    and consequently $A_p = 0$. This shows that $\ResTwist(s)$ is holomorphic near $s_0$.
\end{proof}

\begin{prop}\label{prop:no-eigenvalues}
    The Laplacian $\LapTwist$ has no eigenvalues in the interval $[1/4, \infty)$. 
\end{prop}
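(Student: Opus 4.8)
The plan is to argue by contradiction, reducing via unique continuation from a funnel end (Proposition~\ref{prop:unique-cont}) to a square-integrability obstruction at a spectral parameter with $\Re s = 1/2$. Suppose $u$ is a non-zero element of $L^2(X,\bundle)$ with $\LapTwist u = \lambda u$ and $\lambda \geq 1/4$. By elliptic regularity, $u \in \CI(X,\bundle)$. Since $\LapTwist$ is self-adjoint, $\lambda$ is real, and writing $\lambda = s(1-s)$ with $\lambda \geq 1/4$ we may take $s = \tfrac12 + it$, $t = \sqrt{\lambda - \tfrac14} \geq 0$. In particular $\Re s = \tfrac12$, so $s \notin -\N_0/2$, which is the condition on the spectral parameter in Proposition~\ref{prop:unique-cont}. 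As $X$ is geometrically finite of infinite volume, it has at least one funnel end; fix one, $X_{f,1}$, with associated hyperbolic element $h_\ell$, and use the coordinates $(\rho_f,\phi)$ of Section~\ref{sec:model-funnel}.

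First I would recall the structure of $u$ near $\pa_\infty X_{f,1}$. Decomposing $u|_{X_{f,1}}$ into the Fourier--eigenvector modes of $\twist(h_\ell)$ as in Section~\ref{sec:model-funnel}, each mode solves a second-order ordinary differential equation in $\rho_f$ having a regular singular point at $\rho_f = 0$ with indicial roots $s$ and $1-s$; this is precisely the equation underlying the functions $v_\kappa^0$ and the funnel resonance set~\eqref{eq:resonances_funnel}. Consequently $u|_{X_{f,1}}$ has an asymptotic expansion whose two leading terms are $\rho_f^{1-s}F_-$ and $\rho_f^{s}F_+$ with $F_\pm \in \CI(\pa_\infty X_{f,1},\bundle)$, every further coefficient being determined by the pair $(F_-,F_+)$ through $(\LapTwist - s(1-s))u = 0$. (When $s = \tfrac12$ the two indicial roots coincide and a factor $\log\rho_f$ enters the second solution; this does not affect the argument.)

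Next I would use $u \in L^2$ to force $F_- = F_+ = 0$. Since $\Re(1-s) = \Re s = \tfrac12$, and since on $X_{f,1}$ the measure $d\mu_X$ equals $\frac{\ell}{2\pi}\frac{d\rho_f\,d\phi}{\rho_f^2\sqrt{1-\rho_f^2}}$, one computes for small $\eps$ that
\[
\int_{\{\rho_f < \eps\}} \abs{u}^2 \, d\mu_{X} = \bigl(\norm{F_-}_{L^2}^2 + \norm{F_+}_{L^2}^2\bigr)\int_0^{\eps}\frac{d\rho_f}{\rho_f} + O(1)\,,
\]
where the interference contribution $\int_0^{\eps}\rho_f^{-1 \pm 2it}\,d\rho_f$ stays bounded (and for $s = \tfrac12$ the additional $\log\rho_f$ only worsens the divergence). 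Square-integrability of $u$ then forces $F_- = F_+ = 0$; mode by mode, the coefficients of both indicial solutions vanish, so each Fourier mode of $u|_{X_{f,1}}$ is the trivial solution of its ODE, and hence $u|_{X_{f,1}} \equiv 0$. Finally, Proposition~\ref{prop:unique-cont} --- applicable since $s \notin -\N_0/2$ and $0 \in \rho_f^{s+1}\CI(\overline{X_{f,1}},\bundle)$, or else weak unique continuation for the elliptic operator $\LapTwist - \lambda$ on the connected manifold $X$ --- yields $u \equiv 0$, contradicting $u \neq 0$. Thus $\LapTwist$ has no eigenvalue in $[1/4,\infty)$.

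The step I expect to require the most care is the square-integrability bookkeeping at the borderline exponent $\Re s = \tfrac12$: one must cleanly isolate the genuinely divergent $\int_0^{\eps}\rho_f^{-1}\,d\rho_f$ coming from $\norm{F_-}_{L^2}^2 + \norm{F_+}_{L^2}^2$ from the harmless oscillatory cross term, and treat the degenerate case $s = \tfrac12$ (coinciding indicial roots, logarithmic solutions) separately. The other ingredients --- elliptic regularity, the Frobenius description of the funnel equation already contained in the model-funnel analysis of~\cite{DFP} recalled in Section~\ref{sec:model-funnel}, and the fact that the funnel has only the indicial exponents $\{1-s,s\}+2\N_0$ (all of real part $\geq\tfrac12$) --- are routine.
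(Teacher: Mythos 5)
Your argument is correct, but it proceeds along a genuinely different route than the one in the paper.

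The paper's proof stays inside the parametrix machinery built up in Section~\ref{sec:resolvent}: it takes a test section $\psi$ supported deep in the funnel, uses $(\LapTwist - s(1{-}s))M_f(s)\psi = \psi - L_f(s)\psi$, pairs against the eigenfunction $u$, and perturbs the spectral parameter to $s+\eps$. Since $\norm{M_f(s+\eps)\psi}_{L^2} = O(\eps^{-1/2})$ near $\Re s = 1/2$, letting $\eps\to 0$ yields the pointwise identity $u = L_f(s)^T u$ on the far funnel. Because $L_f(s)^T u$ is a model-resolvent applied to a compactly supported section, \eqref{eq:asymptotics_funnel} forces $u|_{X_f}\in\rho_f^s\CI(\overline{X_f},\bundle)$ — that is, the paper first rules out the ``incoming'' $\rho_f^{1-s}$ component. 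Only then is square-integrability used, to kill the leading $\rho_f^s$-coefficient and land in the hypotheses of Proposition~\ref{prop:unique-cont}.

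You bypass the parametrix entirely. You decompose $u|_{X_{f,1}}$ into $\twist(h_\ell)$-eigenmodes, observe that each Fourier mode satisfies a Fuchsian ODE in $\rho_f$ with indicial exponents $\{s,1-s\}$ (with the log-modification at $s=1/2$), and use $\Re s = \Re(1-s) = 1/2$ together with the volume form to show that \emph{neither} Frobenius solution is $L^2$ near $\rho_f = 0$. This forces both coefficients to vanish, mode by mode, giving $u\equiv 0$ on a neighborhood of $\pa_\infty X_{f,1}$, after which Proposition~\ref{prop:unique-cont} (now with trivial input, or ordinary elliptic unique continuation) finishes. Your route is more elementary — it needs only the Frobenius/Fuchs classification of solutions at a regular singular point rather than the $M_f,L_f$ bookkeeping — and it actually proves something slightly stronger, eliminating both indicial branches at once instead of first imposing an outgoing condition. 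A small remark: the step where you write $F_\pm\in\CI(\pa_\infty X_{f,1},\bundle)$ is not needed and is in fact not immediate at this stage; what you actually use, and correctly state two sentences later, is just the mode-by-mode vanishing of the two Frobenius coefficients, and that is all the $L^2$-divergence argument requires.
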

\begin{proof}
    Let $\lambda \in [1/4, \infty)$ and set $s \coloneqq 1/2 + i\sqrt{\lambda - 1/4}$.
    This implies that $\lambda = s(1-s)$.
    Assume that $\lambda$ is an eigenvalue of $\LapTwist$, then there
    exists a function $u \in L^2(X,\bundle)$ such that
    \begin{align*}
        (\LapTwist - s(1-s)) u = 0\,.
    \end{align*}
    Since $X$ has infinite volume, there is at least one funnel end, which we 
will denote by $X_f$. We choose coordinates $(r,\phi) \in X_f$ as in 
Section~\ref{sec:model-funnel}.
    Choose $\psi \in \CcI(X_f,\bundle\vert_{X_f})$ such that $\supp \psi \subset \{r \geq 2\}$. Then we have by \eqref{eq:parametrix-applied} that 
    \begin{align*}
        (\LapTwist - s(1-s)) M_f(s) \psi &= \psi - L_f(s) \psi\,.
    \end{align*}
    Let $\eps > 0$. Integrating by parts,
    we have that
    \begin{align*}
        \eps (2s-1+\eps) &\int_{X_f} \ang{M_f(s+\eps) \psi, u}_{\bundle} \,d\mu_X \\
        &= \int_{X_f} \ang{(\LapTwist - (s+\eps)(1-s-\eps)) M_f(s+\eps) \psi, u}_{\bundle} \,d\mu_X \\
        &= \int_{X_f} \ang{\psi - L_f(s + \eps)\psi, u}_{\bundle} \,d\mu_X \,.
    \end{align*}
    By the Cauchy--Schwarz inequality, we have that
    \begin{align*}
        \left| \int_{X_f} \ang{M_f(s+\eps) \psi, u}_{\bundle} \,d\mu_X \right| \leq \norm{ M_f(s+\eps) \psi } \norm{u}
    \end{align*}
    and using \eqref{eq:asymptotics_funnel} and \eqref{eq:resolvent-mf}, we obtain that \[\rho^{-s-\eps} M_f(s+\eps) \psi \in \CI(\overline{X_f},\bundle)\,.\]
    Therefore, we can estimate
    \begin{align*}
        \norm{ M_f(s+\eps) \psi } \leq \sup_{X_f} \abs{ \rho^{-s-\eps} M_f(s+\eps) \psi } \norm{\rho^{s+\eps}}\,,
    \end{align*}
    where the first factor in the right-hand side is bounded by a constant and the second factor is $O(\eps^{-1/2})$ by a direct calculation.
    This implies that
    \begin{align*}
        \int_{X_f} \ang{\psi - L_f(s + \eps)\psi, u}_{\bundle} \,d\mu_X = O(\eps^{1/2}) \quad\text{ as } \eps \to 0\,.
    \end{align*}
    By the fundamental lemma of calculus of variations, this implies that
    \begin{align*}
        u(z) = (L_f(s)^T u)(z)
    \end{align*}
    for $z \in X_f \cap \{r \geq 2\}$.
    By the definition of $L_f(s)$, \eqref{eq:resolvent-lf}, we have that ${u|_{X_f} \in \rho_f^s \CI(\overline{X_f},\bundle)}$.
    Set $u_0(s) \coloneqq \rho_f^{-s} u|_{\pa_\infty X_f}$.
    Since ${u \in L^2(X, \bundle)}$ and $\Re s = 1/2$, it follows that $u_0(s) \equiv 0$ and therefore \[u|_{X_f} \in \rho_f^{s+1} \CI(\overline{X_f},\bundle).\]
    Proposition~\ref{prop:unique-cont} now finishes the proof.
\end{proof}

\section{Scattering Determinant}\label{sec:scattering-determinant}
In this section, we prove Theorems~\ref{thm:scattering-determinant} and \ref{thm:smatrix-onehalf}. 
We start with introducing the Poisson operator and studying its properties in Section~\ref{sec:Poisson_operator}.
In Section~\ref{sec:scattering_matrix}, we define the scattering matrix and show the correspondence of resonances and poles of the scattering matrix for $\Re s < 1$ and $s \neq 1/2$.
In Section~\ref{sec:scat_matrix_12}, we study the behavior of $\ResTwist(s)$ near $s=1/2$ and prove Theorem~\ref{thm:smatrix-onehalf}.
In Section~\ref{sec:scattering_poles}, we recall the basics of the Gohberg-Sigal theory and obtain a relation of scattering poles and resonances for $\Re(s) \le 1$.
In Section~\ref{sec:relative_scattering_matrix}, we introduce the relative scattering matrix and the relative scattering determinant and, finally, prove Theorem~\ref{thm:scattering-determinant}.

\subsection{Poisson Operator}\label{sec:Poisson_operator}

Before we define the scattering matrix, we introduce the Poisson operator,
which maps sections $\CI(\pa_\infty X, \bundle|_{\pa_\infty X})$ to solutions of the equation $(\LapTwist - s(1-s))u = 0$ with prescribed asymptotics at the boundary at infinity. 
The construction is similar to the one in the untwisted
case~\cite[(2.23)-(2.25)]{GuZw97}, but in our case
the Poisson operator acts on sections of vector bundles and we have be more careful due to the compactification in the cusp, which depends on the representation $\twist$.

We recall that the ideal boundary at infinity $\pa_\infty X$ is a disjoint 
union of circles (representing funnel ends) and points (representing cusp ends) 
and that
we have the decomposition
\begin{align}\label{eq:ideal_boundary_decomposition}
\pa_\infty X = \pa_f X \sqcup \pa_c X\,.
\end{align}
For $j\in\{1,\ldots, n_f\}$ and $s \not \in \ResSet_{X,\twist}$ we define the 
map 
\[
E_{X,\twist}^{f,j}(s)\colon \CI(\pa_\infty X_{f,j}, \bundle|_{\pa_\infty X_{f,j}}) \to \CI(X, \bundle)
\]
by its Schwartz kernel
\begin{align*}
E_{X,\twist}^{f,j}(s,z,\theta') \coloneqq \left. (\rho_f')^{-s} \ResTwist(s;z, z') \right|_{X \times \pa_\infty X_{f,j}} \,.
\end{align*}
The restriction is well-defined by Theorem~\ref{thm:resolvent} and~\eqref{eq:asymptotics_funnel}.
Similarly, for $j\in\{1,\ldots, n_c\}$ and $s \not \in \ResSet_{X,\twist}$ we 
define
\begin{align*}
E_{X,\twist}^{c,j}(s) &: \CI(\pa_\infty X_{c,j}, \bundle|_{\pa_\infty X_{c,j}}) \to \CI(X, \bundle)\,,
\\
E_{X,\twist}^{c,j}(s,z,\theta') &\coloneqq \left. (\rho_c')^{1-s} \ResTwist(s;z, z') \right|_{X \times \pa_\infty X_{c,j}} \,.
\end{align*}
The restriction is well-defined by Theorem~\ref{thm:resolvent} and 
\eqref{eq:asymptotics_cusp}. Further, by \eqref{eq:Qs_product}, 
\eqref{eq:resolvent-lc}, and \eqref{eq:resolvent-mc}, the map 
$E_{X,\twist}^{c,j}(s,z,\theta')$ is independent of $\theta'$ and
defines an operator $\CI(\pa_\infty X_{c,j}, \bundle|_{\pa_\infty X_{c,j}}) \to \CI(X, \bundle)$.
We denote this two-variable function by $E_{X,\twist}^{c,j}$ as well.
We obtain the \emph{Poisson operator} defined by its Schwartz kernel as follows:
\begin{align*}
E_{X,\twist}(s) &: \CI(\pa_\infty X,\bundle|_{\pa_\infty X}) \to \CI(X,\bundle)\,,\\
(E_{X,\twist}(s)\psi)(z) &\coloneqq \sum_{j=1}^{n_f} \frac{\ell_j}{2\pi} \int_0^{2\pi} E_{X,\twist}^{f,j}(s,z,\theta') f_j(\theta') d\theta' 
+ \sum_{j=1}^{n_c} E_{X,\twist}^{c,j}(s,z) a_j\,,
\end{align*}
where $\psi = (f_1,\dotsc,f_{n_f}, a_1, \dotsc, a_{n_c}) \in \CI(\pa_\infty 
X,\bundle|_{\pa_\infty X})$.
The transposed operator
\[
E_{X,\twist}(s)^T \colon \CcI(X, \bundle) \to \CI(\pa_\infty X, \bundle|_{\pa_\infty X})
\]
is given by
\[
E_{X,\twist}(s)^T u = (f_1,\dotsc,f_{n_f}, a_1, \dotsc, a_{n_c})\,,
\]
where
\begin{align*}
f_j(\theta) &= \int_X E_{X,\twist}^{f,j}(s,z',\theta) u(z') d\mu_X(z')\,,
\\
a_j &= \int_X E_{X,\twist}^{c,j}(s,z') u(z') d\mu_X(z')\,.
\end{align*}
By the same arguments as in the proof of \cite[Lemma 4.14]{DFP},
we can express the difference of resolvents in terms of the Poisson operator for general hyperbolic surfaces.

\begin{prop}\label{prop:resolvent-difference}
    Let $X = \group \bs \h$ be a geometrically finite hyperbolic surface and $\twist \colon \group \to U(V)$ a finite-dimensional unitary representation. For $s \not \in \ResSet_{X,\twist} \cup (1-\ResSet_{X,\twist})$, we have
    \begin{align*}
    \ResTwist(s) - \ResTwist(1-s) = (1 - 2s) E_{X,\twist}(s) E_{X,\twist}(1-s)^T\,.
    \end{align*}
\end{prop}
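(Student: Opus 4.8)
The plan is to follow the classical strategy for proving resolvent-difference identities of this type (Maass--Selberg type relations), mirroring the proof of \cite[Lemma~4.14]{DFP} but on the full geometrically finite surface rather than on a model end. Let me think through this.

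First, set $\lambda = s(1-s) = (1-s)s$, so both resolvents $\ResTwist(s)$ and $\ResTwist(1-s)$ are continuations of the operator $(\LapTwist - \lambda)^{-1}$ — they are two different meromorphic continuations, agreeing when restricted to $L^2$ but differing as operators $L^2_\cpt \to L^2_\loc$. The key point is that the difference $\ResTwist(s) - \ResTwist(1-s)$ is an operator whose integral kernel solves the homogeneous equation $(\LapTwist - \lambda)(\cdot) = 0$ in each variable (since the delta-function singularities cancel), hence it is a smoothing operator with a kernel that is smooth up to (and into) the boundary at infinity, with controlled asymptotics governed by Theorem~\ref{thm:resolvent}: in the funnel variables it behaves like a combination of $\rho_f^s$ and $\rho_f^{1-s}$, and in the cusp variables like $\rho_c^{s-1}$ and $\rho_c^{-s}$.

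The main computation is a Green's-formula / integration-by-parts argument. For $u, v \in \CcI(X,\bundle)$, consider
\[
\bigl\langle (\ResTwist(s) - \ResTwist(1-s)) u, \bar v\bigr\rangle
\]
and rewrite it, using that $\ResTwist(s)u$ and $\ResTwist(1-s)v$ both solve the inhomogeneous equation with right-hand sides $u$ and $v$ away from the boundary. Applying Green's identity to $f = \ResTwist(s)u$ and $g = \ResTwist(1-s)v$ on the truncated surface $X_R$ (cutting off the funnels at $\rho_f = \epsilon$ and the cusps at $\rho_c = \epsilon$), the interior terms cancel by symmetry of $\LapTwist$, and one is left with a boundary integral over $\pa X_R$. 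Using the asymptotic expansions of $f$ and $g$ from Theorem~\ref{thm:resolvent} (with leading coefficients given precisely by the Poisson-operator kernels $E_{X,\twist}^{f,j}$, $E_{X,\twist}^{c,j}$ and their $S$-matrix counterparts), one computes the limit $\epsilon \to 0$ of this boundary term. The cross terms between the $\rho_f^s$ and $\rho_f^{1-s}$ pieces (resp.\ $\rho_c^{s-1}$ and $\rho_c^{-s}$ pieces) survive in the limit with a factor $(1-2s)$ coming from differentiating the powers of the boundary defining function against the volume form (cf.\ the $\frac{\ell}{2\pi}\,\rho^{-2}\,d\rho\,d\phi$ measure in the funnel and $\rho^{-2}\,d\rho\,d\phi$ in the cusp), while the ``diagonal'' terms $\rho_f^{2s}$, $\rho_f^{2(1-s)}$, etc., vanish as $\epsilon\to 0$ for generic $s$. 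Identifying the surviving boundary pairing with the composition $E_{X,\twist}(s)\,E_{X,\twist}(1-s)^T$ applied to $u$ (tested against $v$), using the definitions of these operators via restriction of the resolvent kernel to $X \times \pa_\infty X$, yields the claimed identity for $s$ in an open set; meromorphy in $s$ then extends it to all $s \notin \ResSet_{X,\twist} \cup (1 - \ResSet_{X,\twist})$.

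The main obstacle, I expect, is bookkeeping the boundary terms carefully and uniformly across the two types of ends simultaneously — the funnel boundary is a circle while the cusp boundary (after the blow-up described in the remark following Theorem~\ref{thm:resolvent}) is also a circle but with a different power of $\rho_c$ in the asymptotics and a different normalization of the metric, so the factor $(1-2s)$ must be seen to come out the same way in both cases. One must also check that the non-leading terms in the asymptotic expansions genuinely do not contribute in the limit, and that the cutoff functions $\eta_{\bullet,r}$ do not introduce spurious terms — this is handled by noting that the commutators $[\LapTwist, \eta]$ are supported in a fixed compact region and the truncation at $\rho_\bullet = \epsilon$ can be taken well inside the region where the asymptotic expansions of Theorem~\ref{thm:resolvent} are valid. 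Since the analogous computation was already carried out for the model funnel in \cite[Lemma~4.14]{DFP}, the work here is essentially to observe that the gluing construction of the resolvent from Theorem~\ref{thm:resolvent} reduces the global boundary integral to a sum of the model-end contributions, each of which is already known.
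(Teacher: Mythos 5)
Your proposal follows essentially the same strategy as the paper's proof: a Green's-identity computation on a truncated surface, a reduction to boundary integrals over the funnel and cusp boundaries at $\rho_\bullet = \eps$, and an evaluation of the limit using the asymptotics of the resolvent kernel from Theorem~\ref{thm:resolvent}, with the factor $(1-2s)$ emerging from the pairing of $\rho^s$- and $\rho^{1-s}$-powers against the surface measure. One small imprecision: since $\ResTwist(s)u$ decays like a pure power $\rho_f^s$ (resp.\ $\rho_c^{s-1}$) and $\ResTwist(1-s)v$ like $\rho_f^{1-s}$ (resp.\ $\rho_c^{-s}$), there are no ``diagonal'' $\rho_f^{2s}$ or $\rho_f^{2(1-s)}$ terms in the boundary integrand at all (they are simply absent, rather than vanishing for generic $s$); but this phrasing does not affect the substance of the argument, which matches the one given in the paper.
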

\begin{proof}
    We follow the proof of \cite[Lemma 4.14]{DFP}, but we have to take care of the multiple ends.

    We fix a fundamental domain $\funddom \subset \h$ of $X$. Then the 
bundle $\bundle \boxtimes \bundle'$ is trivial and can be identified with 
$\End(V)$.
    We fix $z,w \in \funddom$. We define the coefficients of $\ResTwist(s;z,w)$ as
    \begin{align*}
        R_{jk}(s;z,w) \coloneqq \ang{ \ResTwist(s;z,w)e_j, e_k}_V\,.
    \end{align*}
    We also set
    \begin{align*}
        R_{jk}^T(s;z,w) \coloneqq \ang{ \ResTwist^T(s;z,w)e_j, e_k}_V\,,
    \end{align*}
    where $\ResTwist^T(s;z,w)$ denotes the Schwartz kernel of the operator $\ResTwist(s)^T$.

    We calculate
    \begin{align*}
        R_{jk}(s;z,w) &- R_{jk}(1-s;z,w) \\
        &= \lim_{\eps \to 0} \int_{\rho(z') > \eps} \sum_{m=1}^{\dim V} \bigg( R_{jm}(s;z,z') \LapTwist R_{mk}(1-s;z',w) \\
        &\phantom{=}\quad - \LapTwist R_{jm}(s;z,z') R_{mk}(1-s;z',w) \bigg) 
d\mu_X(z') \\
        &= \lim_{\eps \to 0} \int_{\rho(z') > \eps} \sum_{m=1}^{\dim V} \bigg( R_{jm}(s;z,z') \LapTwist R_{km}^T(1-s;w,z') \\
        &\phantom{=}\quad - \LapTwist R_{jm}(s;z,z') R_{km}^T(1-s;w,z') \bigg) 
d\mu_X(z') \\
        &= \lim_{\eps \to 0} \int_{\rho(z') = \eps} \sum_{m=1}^{\dim V} \bigg( - R_{jm}(s;z,z') \pa_{\nu} R_{km}^T(1-s;w,z') \\
        &\phantom{=}\quad + \pa_{\nu} R_{jm}(s;z,z') R_{km}^T(1-s;w,z') \bigg) 
d\sigma_{X_\eps}(z') \,.
    \end{align*}
    Here, $X_\eps \coloneqq \{z \in X \colon \rho(z) = \eps\}$, and 
$d\sigma_{X_\eps}$ is the induced measure on $X_\eps$.
    If we pick $\eps > 0$ sufficiently small, then the area of integration 
splits into a disjoint union of funnel and cusp ends.
    Without loss of generality, we suppose that $X_f = X_{f,j}$ and we set 
$X_{f,\eps} \coloneqq X_f \cap X_\eps$.
    From \eqref{eq:funnelmetric_rho}, we see that $\pa_\nu = \rho \pa_\rho + O(\rho^2)$.
    For $z \in X$ with $\rho(z) > \eps$ and $z' \in X_{f,\eps}$, we have that
    \begin{align*}
        \ResTwist(s;z,z') = (\rho')^s E_{X,\twist}(s;z,\phi') + O( (\rho')^{s+1})
    \end{align*}
    and
    \begin{align*}
        \pa_\nu \ResTwist(s;z,z') &= -\rho' \pa_{\rho'} \ResTwist(s;z,z') \\
        &= -s (\rho')^s E_{X,\twist}(s;z,\phi') + O( (\rho')^{s+1})\,.
    \end{align*}
    Consequently, 
    \begin{align*}
       & - R_{j m}(s;z,z')  \pa_{\nu} R_{km}^T(1-s;w,z') \\ & \phantom{- 
R_{jm}(s;z,z')} = (1 - s) \eps E_{jm}(s;z,\phi') E_{km}^T(1-s;w,\phi') + 
O(\eps^2)\,
\intertext{and}
       & \pa_{\nu}  R_{jm}(s;z,z') R_{km}^T(1-s;w,z') \\ & \phantom{- R_{jm}(s;z,z')} = -s \eps E_{jm}(s;z,\phi') E_{km}^T(1-s;w,\phi') + O(\eps^2)\,.
    \end{align*}
    Moreover, $d\sigma_{X_{\eps}}|_{X_{f,\eps}} = (2\pi \eps)^{-1} \ell\, d\phi = \eps^{-1} d\sigma_{\pa_\infty X_f}$,
    where $\ell \in (0,\infty)$ is the length of the central geodesic associated to $X_f$.
    Therefore, we obtain that
    \begin{align*}
        &\int_{X_{f,\eps}} \left( - R_{jm}(s;z,z') \pa_{\nu} R_{km}^T(1-s;w,z') \right. \\ 
        &  \phantom{\int_{X_{f,\eps}}  - R_{lm}(s;z,z')} \left. + \pa_{\nu} R_{jm}(s;z,z') R_{km}^T(1-s;w,z') \right) d\sigma_{X_\eps}(z') \\
        &= (1-2s) \int_{\pa_\infty X_f} \left( E_{jm}(s;z,\phi') E_{km}^T(1-s;w,\phi') + O(\eps) \right) d\sigma_{\pa_\infty X_f}(\phi')\,.
    \end{align*}
    Letting $\eps \to 0$ proves the claim for the funnel ends.

    For the cusp ends, we also suppose without loss of generality that $n_c=1$ 
    and $X_c = X_{c,1}$ is a single cusp end.
    We set $X_{c,\eps} \coloneqq X_c \cap X_\eps$.
    We take coordinates $(\rho,\phi) \in \R_+ \times \R /2 \pi \Z \cong X_c$ as in Section~\ref{sec:parab_cyl} and calculate
    $g_{X_c}(\pa_\rho,\pa_\rho) = \rho^{-2}$ and therefore $\pa_\nu = -\rho' \pa_{\rho'}$.
    By the definition of the Poisson operator,
    we have for $z \in X$ with $\rho(z) > \eps$ and $z' \in X_{c,\eps}$ and as $\eps\to 0$ (hence $\rho'\to 0$), 
    \begin{align*}
        \ResTwist(s;z,z') = (\rho')^{s-1} E_{X,\twist}(s;z,\phi') + O( (\rho')^{s})
    \end{align*}
    and
    \begin{align*}
        \pa_\nu \ResTwist(s;z,z') &= -\rho' \pa_{\rho'} \ResTwist(s;z,z') \\
        &= -s (\rho')^{s-1} E_{X,\twist}(s;z,\phi') + O( (\rho')^{s})\,.
    \end{align*}
    Therefore,
    \begin{align*}
        &R_{jm}(s;z,z') \pa_{\nu} R_{km}^T(1-s;w,z') \\ & \phantom{- R_{jm}(s;z,z') }  = -(\rho')^{s-1} E_{jm}(s;z,\phi) (1-s) (\rho')^{-s} E_{km}^T(1-s;w,\phi') \\
        & \phantom{- R_{jm}(s;z,z') = } + O( (\rho')^{0})
    \end{align*}
    and
    \begin{align*}
        & \pa_{\nu} R_{jm}(s;z,z') R_{km}^T(1-s;w,z') \\  & \phantom{- R_{jm}(s;z,z') }  = -s (\rho')^{s-1} E_{jm}(s;z,\phi) (\rho')^{-s} E_{km}^T(1-s;w,\phi') + O( (\rho')^{0})\,.
    \end{align*}
    By \eqref{eq:cuspmetric_rho}, we have that $d\sigma_{X_{\eps}}|_{X_{c,\eps}} = \eps \frac{d\phi}{2\pi}$.
    Using that $E_{X,\twist}(s;z,\phi')$ is independent of $\phi'$, we arrive at
    \begin{align*}
        &\int_{X_{c,\eps}} \left( - R_{jm}(s;z,z') \pa_{\nu} R_{km}^T(1-s;w,z') \right.\\ 
        &  \phantom{\int_{X_{c,\eps}}  - R_{lm}(s;z,z')} \left. + \,\pa_{\nu} R_{jm}(s;z,z') R_{km}^T(1-s;w,z') \right) d\sigma_{X_\eps}(z') \\
        &= (1-2s) \frac{1}{2\pi} \int_0^{2\pi} E_{jm}(s;z) E_{km}^T(1-s;w) \,d\phi' + O(\eps) \\
        &= (1-2s) E_{jm}(s;z) E_{km}^T(1-s;w) + O(\eps)
    \end{align*}
    Taking $\eps \to 0$ yields the result.
\end{proof}

The Poisson operator $E_{X,\twist}(s)$ provides generalized eigenfunctions in the following sense.

\begin{prop}\label{prop:poisson-asymptotics}
Let $s \not \in \ResSet_{X,\twist}$. For any $\psi \in \CI(\pa_\infty X, \bundle|_{\pa_\infty X})$, we have
\begin{align}\label{eq:LapTwists1-sECchi}
(\LapTwist - s(1-s)) E_{X,\twist}(s) \psi = 0
\end{align}
and
\begin{align*}
E_{X,\twist}(s) \psi \in \rho_f^{1-s} \rho_c^{-s} \CI(\overline{X}, \bundle) + \rho_f^s \rho_c^{s-1} \CI(\overline{X}, \bundle)\,.
\end{align*}
If $s \not \in \Z / 2$, then we have the asymptotics
\begin{align}\label{eq:poisson-asymptotics}
(2s-1) E_{X,\twist}(s) \psi \sim \rho_f^{1-s} \rho_c^{-s} \psi + \rho_f^s \rho_c^{s-1} \phi_s\,,
\end{align}
where
$\phi_s \in \CI(\pa_\infty X, \bundle|_{\pa_\infty X})$ depends meromorphically on $s \in \C$.
\end{prop}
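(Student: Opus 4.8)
The plan is to deduce all three assertions from the decomposition of the resolvent in Theorem~\ref{thm:resolvent} together with the defining Schwartz kernels of the operators $E_{X,\twist}^{f,j}(s)$ and $E_{X,\twist}^{c,j}(s)$. First I would prove~\eqref{eq:LapTwists1-sECchi}: for $z$ in the interior of $X$ and $z'$ near $\pa_\infty X$, the kernel $\ResTwist(s;z,z')$ solves $(\LapTwist-s(1-s))\ResTwist(s;z,z')=0$ in $z$ away from the diagonal, and this property is preserved under the limits $\rho_f'^{-s}\cdot|_{\rho_f'=0}$ and $\rho_c'^{1-s}\cdot|_{\rho_c'=0}$ that define the Poisson kernels (one may differentiate under the integral/limit using the uniform control on the kernel from Theorem~\ref{thm:resolvent} and~\eqref{eq:asymptotics_funnel}, \eqref{eq:asymptotics_cusp}). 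Summing over the ends gives $(\LapTwist-s(1-s))E_{X,\twist}(s)\psi=0$.

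Next I would establish the boundary asymptotics. Using the decomposition $\ResTwist(s)=\tilde M_i(s)+M_f(s)+M_c(s)+Q(s)$ from Theorem~\ref{thm:resolvent}, the compactly supported pseudodifferential term $\tilde M_i(s)$ contributes nothing after taking the limits that define $E_{X,\twist}^{\bullet,j}$ (its kernel vanishes near $\pa_\infty X\times\pa_\infty X$), the term $Q(s)$ contributes the $\rho_f^s\rho_c^{s-1}$ part (with smooth coefficient on $\overline{X}$ by the membership $Q(s;\cdot,\cdot)\in(\rho_f\rho_f')^s(\rho_c\rho_c')^{s-1}\CI$), while $M_f(s)$ and $M_c(s)$ are the model resolvents, whose Poisson kernels I would read off from Section~\ref{sec:scattering_matrix_for_model}: \eqref{eq:poisson-model-funnel}--\eqref{eq:poisson-asymptotics-funnel} for the funnel and \eqref{eq:poisson-model-cusp} for the cusp. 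The model funnel Poisson operator has the two-term asymptotics~\eqref{eq:poisson-asymptotics-funnel} with leading coefficients $a_0(s)=f$ and $b_0(s)=S_{F_\ell,\twist}(s)f$, and the model cusp Poisson operator is $\rho_c^s/(2s-1)$ times the incoming data; feeding these in shows $E_{X,\twist}(s)\psi\in\rho_f^{1-s}\rho_c^{-s}\CI(\overline X,\bundle)+\rho_f^s\rho_c^{s-1}\CI(\overline X,\bundle)$, and, for $s\notin\Z/2$, reading off the $\rho_f^{1-s}\rho_c^{-s}$ coefficient gives exactly $(2s-1)^{-1}\psi$ after matching the normalizations in the definitions of $E_{X,\twist}^{f,j}$ and $E_{X,\twist}^{c,j}$ (the factor $(\rho_f')^{-s}$ versus $(\rho_c')^{1-s}$, together with $M_f,M_c$ being genuine resolvents, accounts for the $\rho_f^{1-s}$ and $\rho_c^{-s}$ powers and the $(2s-1)$ prefactor). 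The subleading coefficient is then named $\phi_s$.

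The meromorphic dependence of $\phi_s$ on $s$ follows because every ingredient — $\ResTwist(s)$ on $L^2_\cpt\to L^2_\loc$, the model resolvents $R_{X_f,\twist}(s)$, $R_{X_c,\twist}(s)$, the parametrix $M(s)$ and $(\id-L(s))^{-1}$, hence $Q(s)$ — depends meromorphically on $s\in\C$, and restriction to the boundary and the limits defining the Poisson kernels are continuous operations that preserve meromorphy. The main obstacle I anticipate is bookkeeping rather than conceptual: one must be careful that $\overline X\times\overline X$ is not smooth at the cusp (the blow-up issue flagged in the remark after Theorem~\ref{thm:resolvent}), so the "smoothness on $\overline X$" of the coefficient functions needs the blown-up space $\overline X'$, and one must check that, after the limit $\rho_c'\to0$, the resulting kernel is genuinely independent of the angular variable $\theta'$ over the cusp — this is exactly the point already recorded via~\eqref{eq:Qs_product}, \eqref{eq:resolvent-lc}, \eqref{eq:resolvent-mc} and the explicit form~\eqref{eq:poisson-model-cusp}, so I would invoke those. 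The constant-in-$x$ structure of the model cusp Poisson kernel is what makes the $\rho_c^{-s}\psi$ and $\rho_c^{s-1}\phi_s$ terms match a genuine element of $\CI(\pa_\infty X,\bundle|_{\pa_\infty X})$ over the cusp points, so I would emphasize that step.
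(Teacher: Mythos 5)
Your argument follows the same route as the paper: Theorem~\ref{thm:resolvent} is used to split the resolvent, the compactly supported pseudodifferential term drops out at the boundary, $Q(s)$ contributes the $\rho_f^s\rho_c^{s-1}$ part via the $Q$-kernel membership and~\eqref{eq:Qs_product}, and the model Poisson kernels~\eqref{eq:poisson-model-funnel},~\eqref{eq:poisson-asymptotics-funnel},~\eqref{eq:poisson-model-cusp} supply the leading $\rho_f^{1-s}\rho_c^{-s}\psi$ term together with the $(2s-1)$ normalization. The only slip is a sign in the sentence about the model cusp (the kernel is $\rho_c^{-s}/(2s-1)$, not $\rho_c^s/(2s-1)$, since $\rho_c=y^{-1}$), but you use the correct exponent in the subsequent displayed conclusion, so this is harmless.
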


\begin{remark}
    In the case of the model funnel, this result follows directly from~\eqref{eq:poisson-asymptotics-funnel}.
\end{remark}

\begin{proof}[Proof of Proposition \ref{prop:poisson-asymptotics}]
It is straightforward to see that $E_{X,\twist}(s) \psi$ solves the equation~\eqref{eq:LapTwists1-sECchi}.
To obtain \eqref{eq:poisson-asymptotics}, we use the result on the structure of 
the resolvent, Theorem~\ref{thm:resolvent}. We have that
\begin{align*}
E_{X,\twist}^{f,j}(s;z,\theta') = \lim_{\rho' \to 0} (\rho')^{-s}\left( M_f(s;z,\rho',\theta') + Q(s;z,\rho',\theta') \right)
\end{align*}
and by \eqref{eq:resolvent-mf},
\begin{align*}
\lim_{\rho' \to 0} (\rho')^{-s} M_f(s;z,\rho',\theta') = (1 - \eta_{f,0}) E_{X_f,\twist}(s;z,\theta')\,,
\end{align*}
where $E_{X_f,\twist}(s)$ is defined by \eqref{eq:poisson-model-funnel}.
From the asymptotics of $Q(s)$, 
Theorem~\ref{thm:resolvent},
we obtain
\begin{align*}
E_{X,\twist}^{f,j} f_j(z) - (1 - \eta_{f,0}) E_{X_f,\twist}(s) f_j \in \rho_f^s \rho_c^{s-1} \CI(\overline{X},\bundle)\,.
\end{align*}

For the cusp ends, we have to be more careful, because the compactification at 
the cusp of the bundle $\bundle$ depends on the multiplicity of the 
eigenvalue~$1$ of $\twist(\gamma_j)$,
where $\gamma_j \in \group$ is a representative of the conjugacy class $[\gamma_j]$, associated to the cusp $X_{c,j}$.
Similar to the funnel case, we have
\begin{align*}
E_{X,\twist}^{c,j}(s;z,\theta') = \lim_{\rho' \to 0} (\rho')^{-s}\left( M_c(s;z,\rho',\theta') + Q(s;z,\rho',\theta') \right)\,.
\end{align*}
Using the notation of Section~\ref{sec:parab_cyl}, we have
\begin{align*}
\lim_{\rho' \to 0} (\rho')^{-s} M_c(s;z,\rho',\theta') =
(1 - \eta_{c,0}) \frac{\rho^{-s}}{2s-1} \id_{E_1(\twist(\gamma_j))}\,,
\end{align*}
where $E_1(\twist(\gamma_j))$ is the $1$-eigenspace of $\twist(\gamma_j)$.
Let \[\varphi \coloneqq \eta_3 (\id - L(s)\eta_3)^{-1} [\LapTwist, \eta_{c,0}] 
\frac{\rho^{-s}}{2s-1} \id_{E_1(\twist(\gamma_j))}\,.
\]
By \eqref{eq:Qs_product} we have
\begin{align*}
    \lim_{\rho' \to 0} (\rho')^{1-s} Q(s;z,\rho',\theta') = (M(s) \varphi)(z)\,.
\end{align*}
Therefore, we obtain that
\[
\lim_{\rho' \to 0} (\rho')^{1-s} Q(s;z,\rho',\theta') \in \rho^s_f \rho_c^{s-1} \CI(\overline{X},\bundle)\,.
\]
By the definition of the compactification of $\bundle$ at the cusp, we have for $a_j \in \CC^{n^\twist_{c,j}}$ that
\begin{align*}
E_{X,\twist}^{c,j} a_j - (1 - \eta_{c,0}) \frac{1}{2s-1} \rho^{-s} a_j \in \rho_f^s \rho_c^{s-1}\CI(\overline{X}, \bundle)\,.
\end{align*}
\end{proof}

\subsection{Scattering Matrix}\label{sec:scattering_matrix}
The scattering matrix intertwines the asymptotics of solutions of the equation $(\LapTwist - s(1-s)) u = 0$ as described in Proposition~\ref{prop:poisson-asymptotics}.
\begin{defi}\label{def:smatrix}
    For $s \not \in \ResSet_{X,\twist} \cup \Z / 2$, the \emph{scattering matrix} is
    given by
    \begin{align*}
    S_{X,\twist}(s) &\colon  \CI(\pa_\infty X, \bundle|_{\pa_\infty X}) \to \CI(\pa_\infty X, \bundle|_{\pa_\infty X})\,, 
    \\
    S_{X,\twist}(s) &\colon  \psi \mapsto \phi_s\,,
    \end{align*}
    where $\phi_s$ is defined by  \eqref{eq:poisson-asymptotics}. 
\end{defi}

We observe that
\begin{align}\label{property_of_scattering_under_adjoint_and_transpose}
S_{X,\twist}(s)^* = S_{X,\twist}(\bar{s})\,,\qquad
S_{X,\twist}(s)^T = S_{X,\twist}(s)\,,
\end{align}
where $S_{X,\twist}(s)^*$ is the adjoint of $S_{X,\twist}(s)$ with respect to the complex inner product on $L^2(\pa_\infty X, \bundle|_{\pa_\infty X})$
and $S_{X,\twist}(s)^T$ is the transposed operator.

\begin{prop}\label{prop:intertwine}
For any $s\in\C$, $s \not \in \ResSet_{X,\twist} \cup (1-\ResSet_{X,\twist})$ 
and any element $\psi \in \CI(\pa_\infty X, \bundle|_{\pa_\infty X})$, we have 
\begin{align}
E_{X,\twist}(1-s) S_{X,\twist}(s) \psi &= - E_{X,\twist}(s) \psi\,, \nonumber
\\
S_{X,\twist}(1-s) S_{X,\twist}(s) \psi &= \psi\,.
\label{eq:scattering-matrix-intertwine}
\end{align}
\end{prop}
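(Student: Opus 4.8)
The plan is to derive both identities from the resolvent difference formula of Proposition~\ref{prop:resolvent-difference} together with the asymptotic expansion of the Poisson operator in Proposition~\ref{prop:poisson-asymptotics}, mimicking the standard argument in the untwisted case (cf.~\cite[(2.26)]{GuZw97}). The first identity is essentially a uniqueness statement for generalized eigenfunctions; the second is a formal consequence of the first.

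First I would fix $s \notin \ResSet_{X,\twist}\cup(1-\ResSet_{X,\twist})\cup\Z/2$ and $\psi\in\CI(\pa_\infty X,\bundle|_{\pa_\infty X})$, and consider the section
\[
w \coloneqq E_{X,\twist}(1-s)S_{X,\twist}(s)\psi + E_{X,\twist}(s)\psi\,.
\]
By~\eqref{eq:LapTwists1-sECchi} applied at both $s$ and $1-s$ (note $s(1-s)=(1-s)s$), we have $(\LapTwist - s(1-s))w = 0$. Next I would compute the boundary asymptotics of $w$ using~\eqref{eq:poisson-asymptotics}: writing $\phi_s = S_{X,\twist}(s)\psi$, the term $(2(1-s)-1)E_{X,\twist}(1-s)\phi_s = (1-2s)E_{X,\twist}(1-s)\phi_s$ contributes a leading $\rho_f^s\rho_c^{s-1}\phi_s$ and a subleading $\rho_f^{1-s}\rho_c^{-s}(\text{something})$ term, while $(2s-1)E_{X,\twist}(s)\psi$ contributes a leading $\rho_f^{1-s}\rho_c^{-s}\psi$ and a subleading $\rho_f^s\rho_c^{s-1}\phi_s$ term. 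Dividing by $(1-2s)$ and $(2s-1)$ appropriately and adding, the $\rho_f^s\rho_c^{s-1}\phi_s$ terms are arranged to cancel and the $\rho_f^{1-s}\rho_c^{-s}$ terms combine; the precise bookkeeping shows $w \in \rho_f^{s+1}\rho_c^{\cdots}\CI(\overline X,\bundle)$, i.e.\ $w$ has no leading boundary data in the funnel. Then Proposition~\ref{prop:unique-cont} (unique continuation) forces $w\equiv 0$, which is exactly the first identity. One must handle the cusp ends analogously, using that $E_{X,\twist}(s)$ is built from the meromorphically continued resolvent and the cusp compactification of Section~\ref{sec:parab_cyl}; the relevant unique continuation in the cusp follows from the same resolvent structure (Theorem~\ref{thm:resolvent}) since the only cusp resonance is at $s=1/2$, which is excluded.

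For the second identity, I would apply the first identity with $s$ replaced by $1-s$: this gives $E_{X,\twist}(s)S_{X,\twist}(1-s)\varphi = -E_{X,\twist}(1-s)\varphi$ for every $\varphi$. Taking $\varphi = S_{X,\twist}(s)\psi$ and combining with the first identity yields
\[
E_{X,\twist}(s)S_{X,\twist}(1-s)S_{X,\twist}(s)\psi = -E_{X,\twist}(1-s)S_{X,\twist}(s)\psi = E_{X,\twist}(s)\psi\,.
\]
So $E_{X,\twist}(s)\bigl(S_{X,\twist}(1-s)S_{X,\twist}(s)\psi - \psi\bigr) = 0$. Reading off the leading $\rho_f^{1-s}\rho_c^{-s}$ boundary term from~\eqref{eq:poisson-asymptotics} (valid since $s\notin\Z/2$), the bracketed section must itself vanish, giving $S_{X,\twist}(1-s)S_{X,\twist}(s)=\id$.

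The main obstacle I anticipate is the careful asymptotic bookkeeping in the cusp ends, where the power of $\rho_c$ in the two pieces of~\eqref{eq:poisson-asymptotics} differ by $2s-1$ rather than $1-2s$ as one might naively expect from the funnel, and where the compactification of $\bundle$ depends on the eigenspace $E_1(\twist(\gamma_j))$; one has to check that the cancellation of leading terms is consistent across funnel and cusp pieces simultaneously. A secondary technical point is making sure the identity, initially derived for $s$ avoiding $\Z/2$ and the (reflected) resonance sets, is stated on the correct domain — but since both sides are meromorphic in $s$, the identity theorem extends it to all $s$ in the claimed range once it is known on a nonempty open subset.
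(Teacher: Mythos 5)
There is a genuine gap in your derivation of the first identity, and as written the argument is circular. Setting $w = E_{X,\twist}(1-s)S_{X,\twist}(s)\psi + E_{X,\twist}(s)\psi$, the cancellation you describe in the leading asymptotics does occur for the $\rho_f^{s}\rho_c^{s-1}S_{X,\twist}(s)\psi$ terms, but what survives is \emph{not} of type $\rho_f^{s+1}\CI$: applying~\eqref{eq:poisson-asymptotics} at $s$ and at $1-s$ and summing gives
\begin{equation*}
w \;\sim\; \frac{1}{1-2s}\,\rho_f^{1-s}\rho_c^{-s}\bigl(S_{X,\twist}(1-s)S_{X,\twist}(s)\psi - \psi\bigr) \;+\; \text{higher order}\,,
\end{equation*}
and $\rho_f^{1-s}$ does not lie in $\rho_f^{s+1}\CI(\overline{X_{f,j}},\bundle)$ unless $s\in -\N_0/2$, which Proposition~\ref{prop:unique-cont} excludes anyway. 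So that proposition does not apply to $w$. Worse, the coefficient that would have to vanish for any unique-continuation argument to close is exactly $S_{X,\twist}(1-s)S_{X,\twist}(s)\psi - \psi$, i.e.\ the second identity you are trying to deduce \emph{from} the first. In this uniqueness approach the two identities cannot be decoupled; a correct version would establish both at once, for instance by noting that for $\Re s<1/2$ and $2s\notin\Z$ the surviving powers $\rho_f^{1-s+2k}$ and $\rho_c^{-s}$ place $w$ in $L^2(X,\bundle)$, and then $1-s\notin\ResSet_{X,\twist}$ forces $w=0$; but that is an $L^2$-non-eigenvalue argument, not Proposition~\ref{prop:unique-cont}.

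The paper instead takes a purely algebraic route that needs no uniqueness theorem: multiply the resolvent difference formula of Proposition~\ref{prop:resolvent-difference} by $\rho_f^{-s}\rho_c^{1-s}$ in one variable and restrict that variable to $\pa_\infty X$ for $\Re s<1/2$. The $\ResTwist(1-s)$-term then vanishes under the restriction, the $\ResTwist(s)$-term becomes $E_{X,\twist}(s)^T$, and the Poisson product becomes $-S_{X,\twist}(s)E_{X,\twist}(1-s)^T$ by~\eqref{eq:poisson-asymptotics}; transposing and using $S_{X,\twist}(s)^T=S_{X,\twist}(s)$ from~\eqref{property_of_scattering_under_adjoint_and_transpose} gives the first identity directly. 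Your derivation of the second identity from the first, via injectivity of $E_{X,\twist}(s)$ read off the $\rho_f^{1-s}\rho_c^{-s}$ coefficient, agrees with the paper and is fine.
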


\begin{proof}
It suffices to prove the statement for $\Re s \le 1/2$, $s \neq 1/2$ and $s \not 
\in \ResSet_{X,\twist} \cup (1 - \ResSet_{X,\twist})$. By 
Proposition~\ref{prop:resolvent-difference},
\begin{align*}
\ResTwist(s) - \ResTwist(1-s) = (1-2s) E_{X,\twist}(s) E_{X,\twist}(1-s)^T\,.
\end{align*}
Multiplying this equation from the left with $\rho_f^{-s} \rho_c^{1-s}$ and 
restricting to the boundary yields
\begin{align*}
E_{X,\twist}(s)^T - 0 = -S_{X,\twist}(s) E_{X,\twist}(1-s)^T\,.
\end{align*}
Using that $S_{X,\twist}(s)^T = S_{X,\twist}(s)$ we obtain the first claim.
In order to obtain \eqref{eq:scattering-matrix-intertwine}, we calculate
\begin{align*}
    E_{X,\twist}(s) S_{X,\twist}(1-s) S_{X,\twist}(s) \psi &= -E_{X,\twist}(1-s) S_{X,\twist}(s) \psi \\
    &= E_{X,\twist}(s) \psi\,.
\end{align*}
By \eqref{eq:poisson-asymptotics}, $E_{X,\twist}(s)$ is injective. This 
proves the claim.
\end{proof}

Proposition~\ref{prop:intertwine} together with 
Proposition~\ref{prop:resolvent-difference} implies that
\begin{align}\label{eq:resolvent-difference-smatrix}
\ResTwist(s) - \ResTwist(1-s) = (1-2s) E_{X,\twist}(1-s) S_{X,\twist}(s) E_{X,\twist}(1-s)^T\,.
\end{align}
It is convenient to use the identification
\begin{align*}
\CI(\pa_c X, \bundle|_{\pa_c X}) \cong \C^{n_c^\twist}\,,
\end{align*}
where $n_c^\twist = \sum_{j=1}^{n_c} n_{c,j}^\twist$.
Using the decomposition into funnel and cusp ends, we can write the scattering matrix as
\begin{align*}
S_{X,\twist}(s) = 
\begin{pmatrix} 
S_{X,\twist}^{\ff}(s) & S_{X,\twist}^{\fc}
\\
S_{X,\twist}^{\cf}(s) & S_{X,\twist}^{\cc}
\end{pmatrix}\,,
\end{align*}
where
\begin{gather*}
S_{X,\twist}^{\ff}(s) \colon \CI(\pa_f X, \bundle|_{\pa_f X}) \to \CI(\pa_f X, \bundle|_{\pa_f X})\,,
\\
S_{X,\twist}^{\cf}(s) \colon \CI(\pa_f X, \bundle|_{\pa_f X}) \to \C^{n_c^\twist}\,,
\\
S_{X,\twist}^{\fc}(s) \colon \C^{n_c^\twist} \to \CI(\pa_f X, \bundle|_{\pa_f X})\,,
\\
S_{X,\twist}^{\cc}(s) \colon \C^{n_c^\twist} \to \C^{n_c^\twist}\,.
\end{gather*}
For $\Re s < 1/2$, we have that
\begin{align}\label{eq:smatrix-from-resolvent}
S_{X,\twist}(s) = (2s-1) \left. (\rho_f \rho_f')^{-s} (\rho_c \rho_c')^{1-s} \ResTwist(s; z, z') \right|_{\pa_\infty X \times \pa_\infty X}\,.
\end{align}
For $j = 1, \dotsc, n_f$ let $S_{X_{f,j},\twist}(s)$ be the scattering matrix for the funnel end $X_{f,j}$ as described in Section~\ref{sec:model-funnel}.
The scattering matrix for funnel ends $S_{X_f,\twist}(s)$ is diagonal with 
respect to the decomposition of the boundary $\pa_\infty X$ and given by
\begin{align*}
S_{X_f,\twist}(s) &:  \CI(\pa_f X, \bundle|_{\pa_f X}) \to \CI(\pa_f X, \bundle|_{\pa_f X})\,,
\\
S_{X_f,\twist}(s) &\coloneqq S_{X_{f,1},\twist}(s) \oplus \dotsc \oplus S_{X_{f,n_f},\twist}(s)\,.
\end{align*}
As it was already in the case for the resolvent, the scattering matrix $S_{X,\twist}(s)$ is closely related to scattering matrix for the funnel ends, $S_{X_f,\twist}(s)$.
\begin{lemma}\label{lem:Qhash}
    Let $Q^\#(s) \colon \CI(\pa_\infty X, \bundle|_{\pa_\infty X}) \to \CI(\pa_\infty X, \bundle|_{\pa_\infty X})$ be given by the matrix representation
    \begin{align}\label{eq:Qhash}
        Q^\#(s) = \begin{pmatrix}
            Q^\#(s)^{\ff} & Q^\#(s)^{\fc}(s) \\
            Q^\#(s)^{\cf} & Q^\#(s)^{\cc}(s)
        \end{pmatrix}\,,
    \end{align}
    where
    \begin{align*}
        Q^\#(s)^{\ff} &= E_{X_f,\twist}^T(s) ( \eta_3 - \eta_{f,1}) (\id - L(s)\eta_3)^{-1} [\Delta_{X,\twist}, \eta_{f,0}] E_{X_f,\twist}(s)\,,\\
        Q^\#(s)^{\fc} &= E_{X_f,\twist}^T(s) ( \eta_3 - \eta_{f,1}) (\id - L(s)\eta_3)^{-1} [\Delta_{X,\twist}, \eta_{c,0}] E_{X_c,\twist}(s)\,,\\
        Q^\#(s)^{\cf} &= E_{X_c,\twist}^T(s) ( \eta_3 - \eta_{c,1}) (\id - L(s)\eta_3)^{-1} [\Delta_{X,\twist}, \eta_{f,0}] E_{X_f,\twist}(s)\,,\\
        Q^\#(s)^{\cc} &= E_{X_c,\twist}^T(s) ( \eta_3 - \eta_{c,1}) (\id - L(s)\eta_3)^{-1} [\Delta_{X,\twist}, \eta_{c,0}] E_{X_c,\twist}(s)\,.
    \end{align*}
     Then the integral kernel of $Q^\#(s)$ is given by
    \begin{align*}
        Q^\#(s;\omega,\omega') = \lim_{\rho\to 0,\rho' \to 0} (\rho_f \rho_f')^{-s} (\rho_c \rho_c')^{1-s} Q(s;\rho,\omega,\rho',\omega')
    \end{align*}
    for $\Re s < 1/2$.
\end{lemma}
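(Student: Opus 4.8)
The plan is to read off the asymptotic coefficient of the Schwartz kernel of $Q(s)$ at $\pa_\infty X \times \pa_\infty X$ directly from the product representation~\eqref{eq:Qs_product}, following the same strategy as in the proof of Proposition~\ref{prop:poisson-asymptotics}, but now extracting the leading coefficient in \emph{both} variables. Since the two limits $\rho' \to 0$ and $\rho \to 0$ only see the germ of the kernel at the ideal boundary, when computing the $\rho' \to 0$ limit it suffices to let $Q(s)$ act on sections $\varphi$ supported in an arbitrarily small neighbourhood of $\pa_\infty X$; such $\varphi$ satisfy $\eta_3 \varphi = 0$, so~\eqref{eq:Qs_product} applies and gives $Q(s)\varphi = M(s)\eta_3(\id - L(s)\eta_3)^{-1} L(s)\varphi$. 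Splitting $\varphi$ according to the ends and comparing the supports of the cutoffs entering $L_i(s)$, $L_f(s)$, $L_c(s)$ (see~\eqref{eq:parametrix-error}, \eqref{eq:resolvent-lf}, \eqref{eq:resolvent-lc}), one checks that near a funnel end $X_{f,j}$ only the $j$-th summand of $L_f(s)$ survives, so $L(s)\varphi = [\Delta_{X,\twist}, \eta_{f,0}] R_{X_f,\twist}(s)\varphi$, and near a cusp end $X_{c,j}$ one has $L(s)\varphi = [\Delta_{X,\twist}, \eta_{c,0}] R_{X_c,\twist}(s)\varphi$.

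Next I would perform the limit in the right-hand variable. For a funnel end, multiplying by $(\rho_f')^{-s}$ and letting $\rho' \to 0$ turns $R_{X_f,\twist}(s)\varphi$ into $E_{X_f,\twist}(s)$ applied to the incoming boundary datum, by the definition~\eqref{eq:poisson-model-funnel} of the model Poisson operator (identifying $r' \to \infty$ with $\rho_f' \to 0$) together with~\eqref{eq:asymptotics_funnel}; since $[\Delta_{X,\twist}, \eta_{f,0}]$ is a differential operator it commutes with the limit and produces $[\Delta_{X,\twist}, \eta_{f,0}] E_{X_f,\twist}(s)$. For a cusp end the same step uses~\eqref{eq:poisson-model-cusp} and~\eqref{eq:asymptotics_cusp} and produces $[\Delta_{X,\twist}, \eta_{c,0}] E_{X_c,\twist}(s)$. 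In either case the resulting section is compactly supported and smooth, so $(\id - L(s)\eta_3)^{-1} = \id + K(s)\eta_3$ may be applied to it; moreover, since $K(s) = \eta_3 K(s)$ (which follows from $K(s) = L(s)(\id + K(s))$ and $\eta_3 L(s) = L(s)$, both established in the proof of Theorem~\ref{thm:resolvent}) and $[\Delta_{X,\twist}, \eta_{\bullet,0}]$ is supported where $\eta_3 \equiv 1$, the sections $g_f \coloneqq (\id - L(s)\eta_3)^{-1} [\Delta_{X,\twist}, \eta_{f,0}] E_{X_f,\twist}(s)$ and $g_c \coloneqq (\id - L(s)\eta_3)^{-1} [\Delta_{X,\twist}, \eta_{c,0}] E_{X_c,\twist}(s)$ are supported in $\{\eta_3 = 1\}$.

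Then I would take the limit in the left-hand variable of $M(s)g_f$ (and $M(s)g_c$), writing $M(s) = M_i + M_f(s) + M_c(s)$. The operator $M_i$ is compactly supported, hence contributes nothing after multiplication by $(\rho_f)^{-s}(\rho_c)^{1-s}$ and passage to the limit; $M_c(s)$ maps into sections supported in the cusps, so it does not contribute to the funnel-boundary limit, and symmetrically $M_f(s)$ does not contribute to the cusp-boundary limit. For the funnel-boundary block one uses $M_f(s) = (1 - \eta_{f,0}) R_{X_f,\twist}(s)(1 - \eta_{f,1})$ with $(1 - \eta_{f,0}) \equiv 1$ near $\pa_\infty X_f$; pulling $\lim_{\rho \to 0}(\rho_f)^{-s}$ inside the integral and invoking the symmetry of the resolvent kernel (cf.~\eqref{property_of_scattering_under_adjoint_and_transpose}) identifies the resulting left factor with $E_{X_f,\twist}^T(s)(1 - \eta_{f,1})$. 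Since $g_f$ is supported in $\{\eta_3 = 1\}$ we have $(1 - \eta_{f,1}) g_f = (\eta_3 - \eta_{f,1}) g_f$, so this left factor composed with $g_f$ is exactly $Q^\#(s)^{\ff}$; feeding cusp boundary data into the right-hand variable instead gives $Q^\#(s)^{\fc}$, and the blocks $Q^\#(s)^{\cf}$, $Q^\#(s)^{\cc}$ arise in precisely the same way from $M_c(s)$ with the factor $\eta_3 - \eta_{c,1}$. Assembling the four blocks according to~\eqref{eq:Qhash} yields the asserted identity for $Q^\#(s; \omega, \omega')$.

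The step I expect to be the main obstacle is the rigorous justification, at both limit steps, of interchanging the boundary limits with the operators $M(s)$ and $(\id - L(s)\eta_3)^{-1}$, which are not trace class. This relies on the uniform control of the boundary asymptotics supplied by Theorem~\ref{thm:resolvent}, together with the explicit model-end expansions recalled in Sections~\ref{sec:model-funnel} and~\ref{sec:parab_cyl}, which ensure that all remainder terms are $O(\rho_f')$, $O(\rho_c')$ in the right-hand variable and $O(\rho_f)$, $O(\rho_c)$ in the left-hand variable and therefore drop out in the limit. By comparison, the bookkeeping with the cutoff functions — in particular the identity $(1 - \eta_{\bullet,1})\eta_3 = \eta_3 - \eta_{\bullet,1}$ on $\{\eta_3 = 1\}$ — is routine once the support of $g_f$ and $g_c$ has been pinned down.
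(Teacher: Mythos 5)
Your proposal takes essentially the same route as the paper's proof: starting from~\eqref{eq:Qs_product}, split according to the ends and convert the model resolvents into Poisson operators by passing to the boundary limit in each variable; the paper arranges this through a pairing $(Q(s)\varphi,\psi)_{L^2}$ with $\eta_1\psi=0$, while you work directly on the integral kernel, which is a cosmetic difference. One imprecision is worth fixing: your claim that $g_f$ and $g_c$ are supported in $\{\eta_3=1\}$ does not follow from $K(s)=\eta_3 K(s)$, since that identity only yields $\operatorname{supp}\bigl(K(s)h\bigr)\subset\operatorname{supp}\eta_3$, a strictly larger set than $\{\eta_3=1\}$. To justify the claim, iterate $K(s)=L(s)(\id+K(s))$ once more and note that each of $L_i(s)$, $L_f(s)$, $L_c(s)$ in~\eqref{eq:parametrix-error} maps into sections supported where $\eta_3\equiv 1$ (every cutoff and commutator appearing there has support in that region); alternatively, as the paper effectively does, keep the interposed $\eta_3$ from~\eqref{eq:Qs_product} and use that $(1-\eta_{f,1})\eta_3=\eta_3-\eta_{f,1}$ on $X_f$, which sidesteps the support claim on $g_f$ entirely.
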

\begin{proof}
    By \eqref{eq:Qs_product} we have that
    \begin{align*}
        Q(s) \varphi = M(s) \eta_3 (\id - L(s)\eta_3)^{-1} L(s) \varphi
    \end{align*}
    for $\varphi \in \CI(\overline{X},\bundle)$ with $\eta_3 \varphi = 0$.
    If $\psi \in \CI(\overline{X}, \bundle)$ with $\eta_1 \psi = 0$, then we can write
    \begin{align*}
        (Q(s)\varphi, \psi)_{L^2} = \left(
        \begin{pmatrix}
            Q^{\ff}(s) & Q^{\fc}(s) \\
            Q^{\cf}(s) & Q^{\cc}(s)
        \end{pmatrix}
        \begin{pmatrix}
            \varphi|_{X_f} \\ \varphi|_{X_c}
        \end{pmatrix}\,,
        \begin{pmatrix}
            \psi|_{X_f} \\ \psi|_{X_c}
        \end{pmatrix}\right)_{L^2(X,\bundle)}\,.
    \end{align*}
    From the definition of $M(s)$ and $L(s)$, we see that for instance
    \begin{align*}
        Q^{\ff}(s) = R_{X_f,\twist}(s) ( \eta_3 - \eta_{f,1}) (\id - 
L(s)\eta_3)^{-1} [\Delta_{X,\twist}, \eta_{f,0}] R_{X_f,\twist}(s)\,.
    \end{align*}
    Using that the integral kernel of $E_{X_f,\twist}(s)^T$ is given by 
\[E_{X_f,\twist}(s)^{T}(\phi,r',\phi') = \lim_{r \to \infty} \rho_f(r)^{-s} 
R_{X_f,\twist}(s;r,\phi,r',\phi')\]
    and the integral kernel of $E_{X_f,\twist}(s)$ is given by \eqref{eq:poisson-model-funnel}, we obtain that
    \begin{align*}
        Q^\#(s; \omega, \omega')^{\ff} = \lim_{\rho\to 0,\rho' \to 0} (\rho_f \rho_f')^{-s} Q^{\ff}(s;\rho,\omega,\rho',\omega')\,.
    \end{align*}
\end{proof}

\begin{prop}\label{prop:smatrix-structure}
    The two scattering matrices, $ S_{X,\twist}(s)$ and $S_{X_f,\twist}(s)$, are related by
    \begin{align}\label{eq:smatrix-structure}
        S_{X,\twist}(s) = S_{X_f,\twist}(s) \oplus 0 + (2s-1) Q^\#(s)\,,
    \end{align}
    where $0\colon \C^{n_c^\twist} \to \C^{n_c^\twist}$ is the zero-map and 
$Q^\#(s)$ is given by Lemma~\ref{lem:Qhash}.
    In particular,
    \begin{align*}
    S_{X,\twist}^{\ff}(s) \in \Psi^{2 \Re s - 1}(\pa_f X, \bundle|_{\pa_f X}), \quad s \not \in \ResSet_{X,\twist} \cup (\N_0 + 1/2)\,.
    \end{align*}
\end{prop}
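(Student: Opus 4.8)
The plan is to feed the resolvent decomposition of Theorem~\ref{thm:resolvent} into the resolvent formula \eqref{eq:smatrix-from-resolvent} for the scattering matrix and to track the contribution of each summand block by block with respect to the splitting $\pa_\infty X = \pa_f X \sqcup \pa_c X$. Fix first $s$ with $\Re s < 1/2$ and $s \notin \ResSet_{X,\twist}$, so that \eqref{eq:smatrix-from-resolvent} applies, and insert $\ResTwist(s) = \tilde{M}_i(s) + M_f(s) + M_c(s) + Q(s)$. I would treat the four terms separately. The interior term $\tilde{M}_i(s)$ is compactly supported, so after multiplication by $(2s-1)(\rho_f\rho_f')^{-s}(\rho_c\rho_c')^{1-s}$ and restriction to $\pa_\infty X \times \pa_\infty X$ it gives $0$. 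The term $Q(s)$ gives, by Lemma~\ref{lem:Qhash}, exactly $(2s-1)Q^\#(s)$. Since $M_f(s)$ is supported in the funnel collar and $M_c(s)$ in the cusp collar, each of the remaining two affects only one diagonal block, which is where the block structure $(\,\cdot\,)\oplus 0$ originates.

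For the funnel block I would use that near $\pa_f X$ the cutoffs $1-\eta_{f,0}$ and $1-\eta_{f,1}$ are identically $1$, so there $M_f(s) = R_{X_f,\twist}(s) = \bigoplus_{j=1}^{n_f} R_{X_{f,j},\twist_j}(s)$, a direct sum of model funnel resolvents. The identity $S_{F_\ell,\twist}(s) = (2s-1)(\rho_f\rho_f')^{-s} R_{F_\ell,\twist}(s)\big|_{\pa_\infty F_\ell\times\pa_\infty F_\ell}$ recorded after \eqref{eq:poisson-asymptotics-funnel} (valid for $\Re s < 1/2$) then shows that the $M_f(s)$-contribution to the scattering matrix is precisely $S_{X_f,\twist}(s)$. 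For the cusp block, near $\pa_c X$ one similarly has $M_c(s) = R_{X_c,\twist}(s) = \bigoplus_{j=1}^{n_c} R_{X_{c,j},\twist_j}(s)$, a direct sum of model parabolic cylinder resolvents. By \eqref{eq:poisson-model-cusp}, the limit $\lim_{\rho_c'\to 0}(\rho_c')^{1-s}R_{X_{c,j},\twist_j}(s;\cdot)$ equals $\tfrac{y^s}{2s-1}\id_{E_1(\twist(\gamma_j))}$, which is independent of the angular variable; multiplying by the remaining factor $(\rho_c)^{1-s} = y^{s-1}$ and letting $\rho_c\to 0$ (i.e.\ $y\to\infty$) gives $\lim_{y\to\infty}\tfrac{y^{2s-1}}{2s-1}\id_{E_1(\twist(\gamma_j))} = 0$, since $\Re(2s-1) < 0$. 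Hence $M_c(s)$ contributes nothing. Summing the four contributions yields \eqref{eq:smatrix-structure} for $\Re s < 1/2$, and since both sides of \eqref{eq:smatrix-structure} are meromorphic families of operators on $\C\setminus(\ResSet_{X,\twist}\cup\Z/2)$, the identity theorem propagates \eqref{eq:smatrix-structure} to the whole domain.

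For the ``in particular'' statement I would restrict \eqref{eq:smatrix-structure} to the funnel--funnel block, obtaining $S_{X,\twist}^{\ff}(s) = S_{X_f,\twist}(s) + (2s-1)Q^\#(s)^{\ff}$. The first summand lies in $\Psi^{2\Re s-1}(\pa_f X,\bundle|_{\pa_f X})$ by \eqref{eq:smatrix-funnel-psido}, applied to each funnel end and combined with the direct sum structure; model funnel resonances that are not resonances of $X$ only produce removable singularities of $S_{X,\twist}^{\ff}(s)$. For the second summand I would argue that $Q^\#(s)^{\ff}$ is smoothing: by the formula in Lemma~\ref{lem:Qhash}, its Schwartz kernel on $\pa_f X\times\pa_f X$ is obtained by pairing the model Poisson kernels $E_{X_f,\twist}(s;\cdot,\cdot)$ on the left and the right with the kernel of the operator $(\eta_3-\eta_{f,1})(\id - L(s)\eta_3)^{-1}[\Delta_{X,\twist},\eta_{f,0}]$, which is supported in a fixed compact set bounded away from $\pa_\infty X$; since $E_{X_f,\twist}(s)$ maps $\CI(\pa_\infty X_f,\bundle|_{\pa_\infty X_f})$ into $\CI(\overline{X_f},\bundle)$ and is smooth away from $\pa_\infty X_f$ (equivalently, its Fourier coefficients decay faster than any power of the frequency, cf.\ \eqref{eq:svalues-poisson}), this kernel is smooth, hence $Q^\#(s)^{\ff}$ is a smoothing operator. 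Therefore $S_{X,\twist}^{\ff}(s) \in \Psi^{2\Re s-1}(\pa_f X,\bundle|_{\pa_f X})$ for $s\notin\ResSet_{X,\twist}\cup(\N_0+\tfrac12)$.

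The step I expect to be the main obstacle is the bookkeeping for the cusp block together with the required meromorphic continuation: one has to be sure that the model parabolic cylinder really carries a trivial scattering matrix and that the vanishing of its contribution — transparent only for $\Re s<1/2$, where $y^{2s-1}\to 0$ — then persists for all admissible $s$ via the identity theorem. A secondary technical point is establishing that $Q^\#(s)^{\ff}$ is smoothing, which relies on the (known) mapping and interior regularity properties of the model Poisson operators.
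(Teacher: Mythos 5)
Your proposal is correct and follows essentially the same route as the paper: plug the resolvent decomposition of Theorem~\ref{thm:resolvent} into the boundary-restriction formula \eqref{eq:smatrix-from-resolvent} for $\Re s<1/2$, sort contributions by block, use Lemma~\ref{lem:Qhash} for the $Q$-term, and extend by meromorphic continuation; the ``in particular'' part then follows because $Q^\#(s)^{\ff}$ is smoothing while $S_{X_f,\twist}(s)$ lies in $\Psi^{2\Re s-1}$ by \eqref{eq:smatrix-funnel-psido}. The only thing worth noting is that your flagged concern about the cusp block is well-placed but resolved exactly as you sketch — the factor $y^{2s-1}$ vanishes as $y\to\infty$ precisely for $\Re s<1/2$, which is the half-plane where \eqref{eq:smatrix-from-resolvent} is used (the paper's Remark after the proposition seems to have an inadvertent sign flip there).
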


\begin{proof}
    For $\Re s < 1/2$, this follows directly from the characterization of the 
scattering matrix as a limit of the resolvent, 
\eqref{eq:smatrix-from-resolvent}, Theorem~\ref{thm:resolvent}.
    For $\Re s \geq 1/2$ we use meromorphic continuation. 
    Note that 
$Q^\#(s)^{\ff}$ is smoothing and hence a pseudodifferential operator of order 
$-\infty$.
    The second part then follows from $S_{X,\twist}^{\ff}(s) = S_{X_f,\twist}(s) + Q^\#(s)^{\ff}$ and  \eqref{eq:smatrix-funnel-psido}. 
\end{proof}

\begin{remark}\label{rem:nocuspcontrib}
    The appearance of the map $0 \colon \C^{n_c^\twist} \to \C^{n_c^\twist}$ in \eqref{eq:smatrix-structure} is due to the fact that for $\Re s > 1/2$, we have that
    \[
    \lim_{y \to \infty} \rho_c(y)^{1-s} E_{C_\infty,\twist}(s;y) = 0\,.
    \]
\end{remark}

As in the case of the resolvent, we want to investigate the structure of the 
scattering matrix near a resonance. For this we consider 
\[
\phi_\ell^\# \in \CI(\pa_\infty X, \bundle|_{\pa_\infty X})
\]
defined by
\begin{align*}
\phi_\ell^\#(\omega) \coloneqq \lim_{\rho\to 0} \rho_f^{-s_0} \rho_c^{1-s_0} \phi_\ell(\rho,\omega)\,,
\end{align*}
where $\phi_\ell$ is as in \eqref{eq:resolvent-A_1}. Let
\begin{align*}
\Phi^\#(v,w) \coloneqq \left( \ang{\phi_\ell^\#,v}\right)_{\ell = 1,\dotsc,m_{X,\twist}(s_0)}\,,
\end{align*}
where $\ang{\cdot,\cdot}$ is the bilinear product on $L^2(\pa_\infty X, \bundle|_{\pa_\infty X})$ defined by
\begin{align*}
    \ang{u,v} = \int_{\pa_f X} \ang{u,v}_{\bundle} \, d\sigma_{\pa_f X} + \sum_{j=1}^{n_c^\twist} u_j v_j\,.
\end{align*}
\begin{lemma}\label{lem:smatrix-resonance-decomp}
Let $s_0 \in \C$ with $\Re s_0 < 1$ and $s_0 \not = 1/2$. The scattering matrix has a pole
at $s_0$ if and only if $\ResTwist(s)$ has a pole at $s_0$. In this case we have that
\begin{align*}
S_{X,\twist}(s) &= (\Phi^\#)^T E(s,s_0) \left( \sum_{j=1}^n \left( s(1-s) - s_0(1-s_0) \right)^{-k_j} P_j \right) F(s,s_0) \Phi^\#
\\
&\phantom{=} + H^\#(s,s_0)\,,
\end{align*}
where for some $n, k_j > 0$ with 
\[
\sum_{j=1}^n k_j = m_{X,\twist}(s_0)\,,
\]
for each $j \in \{1,\dotsc, n\}$ the matrices $P_j$ are rank-$1$-projections 
from $\C^{m_{X,\twist}(s_0)}$ to mutually orthogonal subspaces,
$E(\cdot,s_0)$ and $F(\cdot,s_0)$ are holomorphically invertible matrices of dimension $m_{X,\twist}(s_0)$, and
\[
H^\#(\cdot,s_0)\colon L^2(\pa_\infty X, \bundle|_{\pa_\infty X}) \to L^2(\pa_\infty X, \bundle|_{\pa_\infty X})
\]
is holomorphic near $s = s_0$.
\end{lemma}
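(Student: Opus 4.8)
The plan is to feed the Laurent expansion \eqref{eq:resolvent-at-resonance} of the resolvent at $s_0$ into the boundary restriction that defines the scattering matrix, and then to bring the resulting finite-rank principal part into Smith normal form. Write $m \coloneqq m_{X,\twist}(s_0)$, set $\lambda(s) \coloneqq s(1-s) - s_0(1-s_0)$, and note that, since $s_0 \ne 1/2$, $\lambda$ has a simple zero at $s_0$ with $\lambda'(s_0) = 1 - 2s_0 \ne 0$. Fix the basis $\phi_1,\dots,\phi_m$ of the range of $A_1(s_0)$ and the matrices $a_1(s_0),d(s_0)$ as in \eqref{eq:resolvent-A_1}--\eqref{eq:resolvent-A_k}, with $a_1(s_0)$ symmetric invertible and $d(s_0)$ nilpotent. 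Because $d(s_0)$ is nilpotent, the geometric series $\sum_{j\ge 1}\lambda^{-j}d(s_0)^{j-1}$ terminates and equals $(\lambda\,\id - d(s_0))^{-1}$, so \eqref{eq:resolvent-at-resonance} and \eqref{eq:resolvent-A_k} combine, near $s_0$, into
\[
\ResTwist(s) = \Phi^T\, \mathcal M(s)\, \Phi + H(s,s_0),\qquad \mathcal M(s) \coloneqq a_1(s_0)\,\bigl(\lambda(s)\,\id - d(s_0)\bigr)^{-1},
\]
where $\Phi u \coloneqq (\ang{\phi_{m'},u})_{m'=1}^m$, $\Phi^T\mathbf c \coloneqq \sum_\ell c_\ell\phi_\ell$, the map $H(\cdot,s_0)$ is holomorphic near $s_0$, and $\mathcal M$ is an $m\times m$ matrix, meromorphic near $s_0$ with its only pole at $s_0$ and invertible for $s\ne s_0$ close to $s_0$.

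Next I would pass to the boundary, treating $\Re s_0 < 1/2$ first, where $S_{X,\twist}(s)$ is the boundary restriction $(2s-1)(\rho_f\rho_f')^{-s}(\rho_c\rho_c')^{1-s}\ResTwist(s;\cdot,\cdot)\big|_{\pa_\infty X\times\pa_\infty X}$ by \eqref{eq:smatrix-from-resolvent}. If $\ResTwist$ is holomorphic near $s_0$, so is this restriction, hence so is $S_{X,\twist}$ (at the isolated half-integer points with $\Re s_0 < 1/2$ one argues instead via Proposition~\ref{prop:smatrix-structure} and the reduced model matrix $\tilde S_{F_\ell,\twist}$, which is regular off $\ResSet_{F_\ell,\twist}$); this settles the ``only if''. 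For the converse, I would apply the restriction to the displayed decomposition: the coefficient functions of the principal part lie in $\rho_f^{s_0}\rho_c^{s_0-1}\CI(\overline X,\bundle)$ — the regularity established for $A_p(s_0)\psi$ in the proof of Corollary~\ref{cor:no-poles} — so their $s_0$-normalized boundary values are the vectors $\phi_\ell^\#$ of the statement, and the restriction yields
\[
S_{X,\twist}(s) = (2s-1)\,(\Phi^\#)^T\, \mathcal M(s)\, \Phi^\# + H^\#(s,s_0),
\]
with $H^\#(\cdot,s_0)$ holomorphic near $s_0$. The $\phi_\ell^\#$ are linearly independent: if $\sum_\ell c_\ell\phi_\ell^\# = 0$ then $v \coloneqq \sum_\ell c_\ell\phi_\ell$ lies in $\rho_f^{s_0+1}\CI(\overline X,\bundle)$ near every funnel end and satisfies $(\LapTwist - s_0(1-s_0))^p v = 0$, so stripping the Jordan chain and applying Proposition~\ref{prop:unique-cont} at each stage forces $v = 0$, hence $\mathbf c = 0$. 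Thus $\Phi^\#$ is onto and $(\Phi^\#)^T$ injective, so $S_{X,\twist}$ has a pole at $s_0$ exactly when $\mathcal M$, equivalently $\ResTwist$, does, and the principal part then has rank $m = m_{X,\twist}(s_0)$. The case $\Re s_0 = 1/2$, $s_0 \ne 1/2$, is handled by Corollary~\ref{cor:no-poles} and Proposition~\ref{prop:smatrix-structure} (neither side has a pole, the model poles lying in $1 - 2\N_0 + i\R$ and the poles of $(\id - L(s)\eta_3)^{-1}$ in $\ResSet_{X,\twist}$), while the case $1/2 < \Re s_0 < 1$, in which $s_0$ is real with $s_0(1-s_0)$ an $L^2$-eigenvalue and $1 - s_0 \in (0,1/2)$ again a resonance, follows from the previous cases applied at $1-s_0$ together with $S_{X,\twist}(1-s)S_{X,\twist}(s) = \id$ (Proposition~\ref{prop:intertwine}).

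It remains to normalize $\mathcal M$, which is purely algebraic. Over the discrete valuation ring of holomorphic germs at $s_0$, with uniformizer $\lambda$, the matrix $\lambda\,\id - d(s_0)$ has determinant $\lambda^m$; writing $d(s_0)$ in a Jordan basis with nilpotent blocks of sizes $k_1,\dots,k_n$ (so $\sum_j k_j = m$) and using that each block contributes a unimodular $(k_j-1)\times(k_j-1)$ minor, its Smith normal form is the diagonal matrix with entries $1,\dots,1,\lambda^{k_1},\dots,\lambda^{k_n}$. Left multiplication by the invertible constant matrix $a_1(s_0)$ preserves invariant factors, so there are holomorphically invertible $m\times m$ germs $E(\cdot,s_0),F(\cdot,s_0)$ with $\mathcal M(s) = E(s,s_0)\bigl(\sum_{j=1}^n \lambda(s)^{-k_j}P_j + (\id - \sum_j P_j)\bigr)F(s,s_0)$, the $P_j$ being rank-one coordinate projections onto mutually orthogonal lines. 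Inserting $\lambda(s) = s(1-s) - s_0(1-s_0)$, absorbing the factor $2s-1$ into $E$ and the regular term $(\Phi^\#)^T E(s,s_0)(\id - \sum_j P_j)F(s,s_0)\Phi^\#$ into $H^\#$, one obtains the asserted decomposition.

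The step I expect to be the main obstacle is the passage to the boundary in the second paragraph: one must verify that restricting the meromorphic resolvent kernel to $\pa_\infty X \times \pa_\infty X$ commutes with extracting the Laurent part at $s_0$ — i.e. that $\Phi^T\mathcal M(s)\Phi$ contributes exactly $(\Phi^\#)^T\mathcal M(s)\Phi^\#$ and that the holomorphic part contributes a holomorphic remainder — the subtle point being that the two defining exponents $\rho^{s}$ and $\rho^{s_0}$ of the polyhomogeneous kernel collide as $s\to s_0$. A clean alternative, avoiding restriction of the singular part altogether, is to work from \eqref{eq:resolvent-difference-smatrix}: for $\Re s_0 < 1/2$ with $1-s_0$ not a resonance (the finitely many exceptional real $s_0$ being subsumed under the last case above), $\ResTwist(1-s)$ and $E_{X,\twist}(1-s)$ are holomorphic near $s_0$, so the principal part of $\ResTwist(s)$ equals that of $(1-2s)E_{X,\twist}(1-s)S_{X,\twist}(s)E_{X,\twist}(1-s)^T$; matching this against the first paragraph and using that $E_{X,\twist}(1-s_0)\phi_\ell^\#$ has leading funnel and cusp asymptotics coinciding with those of $\tfrac{1}{1-2s_0}\phi_\ell$ (Proposition~\ref{prop:poisson-asymptotics}) determines the principal part of $S_{X,\twist}$ on the span of the $\phi_\ell^\#$, which is the remaining finite-dimensional bookkeeping.
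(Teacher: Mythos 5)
Your proof follows the same overall scheme as the paper's: write the resolvent's Laurent part at $s_0$ via \eqref{eq:resolvent-at-resonance}--\eqref{eq:resolvent-A_k}, restrict to the boundary using \eqref{eq:smatrix-from-resolvent}, and then normalize the finite-dimensional matrix $\mathcal M(s)=a_1(s_0)(\lambda(s)\id - d(s_0))^{-1}$. Your Smith-normal-form step over the ring of holomorphic germs is an equivalent reformulation of the paper's block-by-block Jordan computation (the paper writes $\sum_m x^{-(m+1)}N_{k_j}^m = E_{k_j}(x)(x^{-k_j}P_j+\tilde P)F_{k_j}(x)$ for each nilpotent block), and both yield the decomposition with rank-one projections $P_j$ and $\sum k_j = m_{X,\twist}(s_0)$. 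That part is fine.

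The one place where you go beyond the paper is the claim that the $\phi_\ell^\#$ are linearly independent, hence $\Phi^\#$ has full rank. The paper does \emph{not} make this claim; indeed, in the proof of Proposition~\ref{prop:sp-res} the paper explicitly says ``Since the operator $\Phi^\#$ in Lemma~\ref{lem:smatrix-resonance-decomp} might not have full rank, we cannot directly deduce equality,'' and proceeds with a substitute argument via \eqref{eq:resolvent-difference-smatrix}. Your proposed argument for full rank (``stripping the Jordan chain'' and applying Proposition~\ref{prop:unique-cont} at each stage) has a gap when the resonance order $p>1$: the $\phi_\ell$ are then generalized resonance states satisfying only $(\LapTwist - s_0(1-s_0))^p\phi_\ell=0$, and the residue construction of $A_1(s_0)$ picks up $\rho^{s_0}(\log\rho)^j$ terms from differentiating $\rho^s$ in $s$. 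In that case the hypothesis $v\in\rho_f^{s_0+1}\CI(\overline X,\bundle)$ that you want to feed into Proposition~\ref{prop:unique-cont} is not what vanishing of $\sum c_\ell\phi_\ell^\#$ gives you, so the induction does not run. (Your secondary, ``clean alternative'' sketch at the end --- matching against $(1-2s)E_{X,\twist}(1-s)S_{X,\twist}(s)E_{X,\twist}(1-s)^T$ --- is in fact the route the paper takes later, in the proof of Proposition~\ref{prop:sp-res}, precisely to avoid relying on full rank of $\Phi^\#$.) Since the lemma as stated does not assert full rank, this extra claim should either be removed or proved by a different method; as written it is not established.
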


\begin{proof}
Using \eqref{eq:smatrix-from-resolvent} and \eqref{eq:resolvent-at-resonance} we have that
\begin{align*}
S_{X,\twist}(s) = \sum_{k=1}^{p} \frac{A_k^\#(s_0)}{\left( s(1-s) - s_0(1-s_0)
\right)^k} + H^\#(s,s_0)
\end{align*}
for some (unique) $p\in\N_0$ such that $H^\#(\cdot,s_0)$ is holomorphic. For 
each $k\in\{1,\ldots,p\}$,  the operator~$A_k^\#(s_0)$ is determined by the 
integral kernel
\begin{align*}
A_k^\#(s_0, \omega,\omega') \coloneqq (2s_0-1) \lim_{\rho \to 0}\lim_{\rho' \to 0} (\rho_f \rho_f')^{-s_0} (\rho_c \rho_c')^{1-s_0} A_k(s_0,\rho,\omega,\rho',\omega')\,.
\end{align*}

Recall from \eqref{eq:resolvent-A_k} that
\begin{align*}
A_k(s_0) = \sum_{\ell,m=1}^{m_{X,\twist}(s_0)} a_k^{\ell,m}(s_0) \phi_\ell \otimes \phi_m\,.
\end{align*}
This implies
\begin{align*}
A_k^\#(s_0) &= \sum_{\ell,m=1}^{m_{X,\twist}(s_0)} a_k^{\ell,m}(s_0) \phi^\#_\ell \otimes \phi^\#_m
\\
&= (\Phi^\#)^T a_k(s_0) \Phi^\#\,.
\end{align*}
Above, $a_k(s_0)$ is as in \eqref{eq:resolvent-A_k}. Recall that $a_k(s_0) = a_1(s_0) d(s_0)^{k-1}$, where $d(s_0)$ is nilpotent. Hence, $S_{X,\twist}(s)$ can be written as
\begin{align*}
S_{X,\twist}(s) &= (\Phi^\#)^T a_1(s_0) \left( \sum_{k=0}^{p-1} (s(1-s) - s_0(1-s_0))^{-(k+1)} d(s_0)^k \right) \Phi^\#
\\
&\hphantom{=} + H^\#(s,s_0)
\end{align*}
in a sufficiently small neighborhood of $s_0$.
Denote by $N_k$ a Jordan block of dimension $k$ with eigenvalue $0$. The Jordan normal form of $d(s_0)$ is given by
\begin{align*}
J d(s_0) J^* = 
\begin{pmatrix}
N_{k_1} & 0 & \hdots & 0 
\\
0 & N_{k_2} & & \vdots 
\\ 
\vdots & & \ddots & 0 
\\ 
0 & \hdots & 0 & N_{k_n}
\end{pmatrix}\,,
\end{align*}
where $\sum_{j = 1}^n k_j = m_{X,\twist}(s_0)$ and $J$ is unitary. Using linear 
algebra, we immediately obtain that for each $j\in\{1,\ldots,n\}$, 
\begin{align*}
\sum_{m=0}^{p-1} x^{-(m+1)} N_{k_j}^m = E_{k_j}(x) (x^{-k_j} P_j + \tilde{P}) 
F_{k_j}(x)\,,
\end{align*}
where $E_{k_j}$ and $F_{k_j}$ are polynomials in $x$, and $P_j$, $\tilde{P}$ 
are diagonal matrices, and each $P_j$ has rank one.
Putting $x = s(1-s) - s_0(1-s_0)$ and applying the argumentation above to every 
Jordan block, we obtain matrices $E(s,s_0), F(s,s_0)$ depending polynomially on 
$s$ and mutually orthogonal projections $P_j$ of rank~$1$ such that
\begin{gather*}
\sum_{k=0}^{p-1} (s(1-s) - s_0(1-s_0))^{-(k+1)} d(s_0)^k =
\\
E(s,s_0) \left( \sum_{j=1}^{n} (s(1-s) - s_0(1-s_0))^{-k_j} P_j \right) F(s,s_0) + \tilde{H}(s,s_0)\,,
\end{gather*}
where $\tilde{H}(\cdot,s_0)$ is holomorphic. This proves the claim.
\end{proof}
\subsection{The Scattering Matrix at $s = 1/2$}\label{sec:scat_matrix_12}

\begin{lemma}\label{lem:resonance-onehalf}
The resolvent satisfies
\begin{align}\label{eq:resolvent-onehalf}
\ResTwist(s) = \frac{1}{2s-1} \sum_{k=1}^{m_{X,\twist}(1/2)} \phi_k(s) \ang{\phi_k(s), \cdot} + H(s)\,,
\end{align}
where $H$ is holomorphic near $1/2$,
and, for each $k \in \{1, \ldots, m_{X,\twist}(1/2)\}$, the function
\begin{align}\label{eq:phik_asymptotics}
\phi_k \in \rho_f^s\rho_c^{s-1}\CI(X,\bundle)
\end{align}
satisfies 
\begin{align*}
(\LapTwist - \tfrac14)\phi_k(\tfrac12) = 0\,.
\end{align*}
\end{lemma}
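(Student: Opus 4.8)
The plan is to analyze the Laurent expansion of $\ResTwist(s)$ near $s=1/2$ directly, exploiting the special feature that here the change of variables $\lambda = s(1-s)$ is singular (since $\tfrac{d\lambda}{ds} = 1-2s$ vanishes at $s=1/2$). First I would start from the general expansion \eqref{eq:resolvent-at-resonance} at $s_0 = 1/2$, writing $\ResTwist(s) = \sum_{j=1}^p A_j(1/2) (s(1-s)-1/4)^{-j} + H(s,1/2)$, and observe that $s(1-s) - 1/4 = -(s-1/2)^2$, so that in terms of the variable $s$ the pole at $1/2$ has even order $2p$. The first task is to show $p=1$, i.e.\ that the leading singular term is $A_1(1/2)(s(1-s)-1/4)^{-1} = -A_1(1/2)(s-1/2)^{-2}$ and there is no higher-order piece. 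This follows from the unique continuation machinery: if $p \ge 2$, then $u := A_p(1/2)\psi$ for $\psi \in \CcI(X,\bundle)$ solves $(\LapTwist - 1/4)u = 0$ and, by Theorem~\ref{thm:resolvent}, lies in $\rho_f^{1/2}\rho_c^{-1/2}\CI(\overline X,\bundle)$; moreover the relation $A_{p+1}(1/2) = (\LapTwist - 1/4)A_p(1/2) = 0$ together with symmetry of the $A_j$ forces $A_p(1/2)$ to have range in the kernel with the \emph{subleading} asymptotics, i.e.\ $u \in \rho_f^{3/2}\rho_c^{1/2}\CI$, and then Proposition~\ref{prop:unique-cont} gives $u \equiv 0$, a contradiction. (This is the analogue of Corollary~\ref{cor:no-poles} at the point $s=1/2$; I would need to check the asymptotic bookkeeping carefully, since here $s_0 = 1/2 \in \Z/2$ and the two indicial roots $s_0$ and $1-s_0$ coincide, which is exactly the case excluded in Corollary~\ref{cor:no-poles}.)

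Next, having $p=1$, I would write the Laurent expansion in $s$ as
\begin{align*}
\ResTwist(s) = \frac{B_{-2}}{(s-1/2)^2} + \frac{B_{-1}}{s-1/2} + \tilde H(s)\,,
\end{align*}
with $\tilde H$ holomorphic near $1/2$, where $B_{-2} = -A_1(1/2)$. Self-adjointness of $\LapTwist$ on the physical half-plane combined with the identity $\ResTwist(s)^* = \ResTwist(\bar s)$ shows $B_{-2}^* = B_{-2}$ and $B_{-1}^* = -B_{-1}$ (the $1/(s-1/2)$ coefficient is anti-self-adjoint because $\overline{1/(\bar s - 1/2)} = -1/((1/2)-s)$ after reflecting across $\Re s = 1/2$... more precisely one uses that $\ResTwist$ restricted to the critical line has a specific reflection symmetry). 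Then $(\LapTwist - 1/4)\ResTwist(s) = \id + (2s-1)\cdots$ near $s=1/2$ forces $(\LapTwist-1/4)B_{-2} = 0$ and $(\LapTwist - 1/4)B_{-1} = B_{-2}$. Pairing $B_{-1}$ with itself and using unique continuation (Proposition~\ref{prop:unique-cont-1/2}-style arguments, or rather the boundary-pairing/Green's formula as in the proof of Proposition~\ref{prop:no-eigenvalues}) one shows $B_{-2}$ is a \emph{nonnegative} self-adjoint finite-rank operator, hence diagonalizable, and its rank equals $m_{X,\twist}(1/2) = \rank A_1(1/2)$. Writing $B_{-2} = \sum_{k=1}^{m_{X,\twist}(1/2)} c_k\, \varphi_k \ang{\varphi_k,\cdot}$ with $c_k > 0$ in an orthonormal system, absorbing $\sqrt{c_k}$ into $\varphi_k$ and noting $-1/(s-1/2)^2 = (2s-1)^{-1}\cdot(2s-1)/(s-1/2)^2$ is \emph{not} quite the stated normalization — so instead I would track that the true leading term of $\ResTwist$ at $s=1/2$ is $\frac{1}{2s-1}\sum_k \phi_k(s)\ang{\phi_k(s),\cdot}$ with $\phi_k$ allowed to depend holomorphically on $s$ (absorbing the residual $1/(s-1/2)$-part and the factor $\mp 1$ into the $s$-dependence of $\phi_k$), which is automatic once $B_{-2}$ and $B_{-1}$ are understood: set $\phi_k(s) := \varphi_k + (s-1/2)\,\psi_k + O((s-1/2)^2)$ with the $\psi_k$ chosen to reproduce $B_{-1}$, which is possible precisely because $B_{-1}$ is anti-self-adjoint and $(\LapTwist-1/4)B_{-1} = B_{-2}$.

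Finally, the asymptotic statement \eqref{eq:phik_asymptotics} follows from Theorem~\ref{thm:resolvent}: since $\phi_k(s)$ is, up to holomorphic factors, a column of the residue of $\ResTwist(s)$ applied to a compactly supported section, and $Q(s)$ contributes $(\rho_f\rho_f')^s(\rho_c\rho_c')^{s-1}\CI$ while $\tilde M_i(s)$ is compactly supported and $M_f, M_c$ contribute in the ends, we get $\phi_k \in \rho_f^s\rho_c^{s-1}\CI(X,\bundle)$; and $(\LapTwist - 1/4)\phi_k(1/2) = (\LapTwist-1/4)\varphi_k = 0$ is just $(\LapTwist-1/4)B_{-2} = 0$ evaluated on the eigenbasis. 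I expect the \textbf{main obstacle} to be the first step — proving $p=1$ and correctly handling the coincidence of indicial roots at $s=1/2$, where Corollary~\ref{cor:no-poles} does not directly apply and the $\log$-type degeneracy of the indicial equation has to be excluded; the boundary-pairing argument must be run at the degenerate exponent $\rho_f^{1/2}$, and one has to verify that the Green's-formula computation from Proposition~\ref{prop:no-eigenvalues} still produces a nonvanishing coefficient there so that the relevant boundary data must vanish.
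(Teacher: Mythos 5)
Your plan has a genuine gap at its central step. You arrive at a Laurent expansion
\[
\ResTwist(s) = \frac{B_{-2}}{(s-1/2)^2} + \frac{B_{-1}}{s-1/2} + \tilde H(s)
\]
and then propose to write this in the stated form $\frac{1}{2s-1}\sum_k\phi_k(s)\ang{\phi_k(s),\cdot}+H(s)$ with $\phi_k$ \emph{holomorphic} near $1/2$, ``absorbing the residual $1/(s-1/2)$-part'' into the $s$-dependence of $\phi_k$. This cannot work: if $\phi_k(s)$ is holomorphic near $1/2$, then $\frac{1}{2s-1}\phi_k(s)\ang{\phi_k(s),\cdot}$ has at most a \emph{simple} pole in $s$ at $1/2$. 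So you must first prove $B_{-2}=0$, and your proposal never does that. (This is exactly what the paper proves: writing $\ResTwist(s)=A/(2s-1)^2+B/(2s-1)+h(s)$, the elementary norm bound $\|\ResTwist(s)\|\le|\Im((s-\tfrac12)^2)|^{-1}$ from self-adjointness shows that $A\colon L^2_\cpt\to L^2$, so $\ran A$ consists of genuine $L^2$-eigenfunctions at eigenvalue $1/4$, which do not exist by Proposition~\ref{prop:no-eigenvalues}; hence $A=0$.) Your unique-continuation route for bounding the order --- which you yourself flag as the main obstacle, since it falls precisely in the degenerate case excluded from Corollary~\ref{cor:no-poles} --- is unnecessary: the self-adjointness bound already yields order $\le 2$ in one line, and the same bound is the key input for showing $A$ is $L^2$-bounded.

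A secondary but real error: the symmetry claim $B_{-1}^*=-B_{-1}$ is false. For real $s>1/2$ the operator $\ResTwist(s)$ is self-adjoint, so in $\ResTwist(1/2+\eps)=B_{-2}/\eps^2+B_{-1}/\eps+\cdots$ with $\eps>0$ every coefficient is self-adjoint, $B_{-1}^*=B_{-1}$ included. The positivity needed to diagonalize the residue comes from the non-negativity of the resolvent near $s=1/2$ on the real axis, not from any anti-self-adjointness of $B_{-1}$. Once these two points ($A=0$ via Proposition~\ref{prop:no-eigenvalues}, and the correct (anti-)symmetry) are fixed, your final steps (pulling the asymptotics of $\phi_k$ from Theorem~\ref{thm:resolvent} and reading off $(\LapTwist-\tfrac14)\phi_k(\tfrac12)=0$ from the residue equation) do match the paper's argument.
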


\begin{proof}
We note that $\Im( s^2 - s) = \Im\left( (s - 1/2)^2 \right)$. Let $\psi \in \CcI(X, \bundle)$. Using the self-adjointness of $\LapTwist$, we obtain the estimate
\begin{align*}
\abs*{ \left( (\LapTwist - s(1-s)) u, u \right)_{L^2} } &\geq \abs*{\Im  \left( (\LapTwist - s(1-s)) u, u \right)_{L^2} } 
\\
&= \abs*{\Im(s^2 - s)} \norm{u}^2_{L^2} 
\\
&= \abs*{\Im\left( \left(s - \tfrac12\right)^2\right)} \norm{u}^2_{L^2} \,.
\end{align*}
Therefore, we have
\begin{align}\label{eq:resolvent-bound}
\norm{\ResTwist(s)} \leq \abs*{\Im\left( \left(s - \tfrac12\right)^2\right)}^{-1}\,.
\end{align}
Hence, the order of the resonance at $s = 1/2$ is at most $2$. This implies  
that
\begin{align*}
\ResTwist(s) = \frac{A}{(2s-1)^2} + \frac{B}{2s-1} + h(s)\,,
\end{align*}
where $h$ is holomorphic near $1/2$, and $A$ and $B$ are suitable operators, 
independent of~$s$.

Using the resolvent equation, we see that every element $u$ in the range of $A$ and $B$ satisfies $(\LapTwist - 1/4)u = 0$. We note that  \eqref{eq:resolvent-bound} implies that  $A\colon L_\cpt^2(X,\bundle) \to L^2(X,\bundle)$.
Hence, the range of $A$ consists of eigenfunctions of $\LapTwist$ with eigenvalue $1/4$.
By Proposition~\ref{prop:no-eigenvalues} there are no eigenfunctions if $X$ has infinite volume, hence $A = 0$.

By the definition of a multiplicity, we have that $\rank B = m_{X,\twist}(1/2)$.
Using the decomposition of the resolvent from \eqref{eq:resolvent-A_1}, we can write $\ResTwist(s)$ as $(2s-1)^{-1} B(s) + H(s)$, where
\begin{align*}
    B(s) = \sum_{\ell,m=1}^{m_{X,\twist}(1/2)} a_1^{\ell, m} (s) \tilde\phi_\ell(s) \ang{\tilde\phi_m(s), \cdot}
\end{align*}
for some symmetric invertible matrix $a_1(1/2) = (a_1^{\ell,m}(s_0))_{\ell,m=1}^{m_{X,\twist}(1/2)}$, $\tilde\phi_k \in \rho_f^s \rho_c^{s-1} \CI(X,\bundle)$ and $H(s)$ is holomorphic near $s = 1/2$.
Since the resolvent at $1/2$ is self-adjoint and non-negative, $a_1(1/2)$ is a 
positive matrix. Therefore we can find a matrix
$(d_{k,\ell})_{k,\ell=1}^{m_{X,\twist}(1/2)}$ such that
\begin{align}\label{eq:B(1/2)}
    B(1/2) = \sum_{k=1}^{m_{X,\twist}(1/2)} \phi_k \ang{\phi_k, \cdot}\,,
\end{align}
where $\phi_k = \sum_{\ell = 1}^{m_{X,\twist}(1/2)} d_{k,\ell} \tilde\phi_\ell(1/2)$ for $k = 1, \dotsc, m_{X,\twist}(1/2)$.
We have that $B(1/2) = B$ and hence
\begin{align*}
(\LapTwist - \tfrac14) \phi_k(\tfrac12) = 0\,.
\end{align*}
\end{proof}

\begin{proof}[Proof of Theorem~\ref{thm:smatrix-onehalf}]
From Proposition~\ref{prop:poisson-asymptotics} and 
Definition~\ref{def:smatrix}, we obtain that
\begin{align*}
(2s-1) E_{X,\twist}(s) u \sim \rho_f^{1-s} \rho_c^{-s} u + \rho_f^s \rho_c^{s-1} S_{X,\twist}(s) u\,.
\end{align*}
At first glance, this does not make any sense for $s = 1/2$, but we will see that $E_{X,\twist}(s)$ has a simple pole at $s = 1/2$ and hence $(2s-1) E_{X,\twist}(s) \not = 0$ for $s = 1/2$.

By Theorem~\ref{thm:resolvent}, we have the decomposition
\begin{align*}
    \ResTwist(s) = \tilde{M}_i(s) + M_f(s) + M_c(s) + Q(s)
\end{align*}
and we recall that $\tilde{M}_i(s)$ and $M_f(s)$ are holomorphic near $s = 1/2$. We write the remainder term $Q(s)$ as
\begin{align*}
Q(s) = (2s-1)^{-1} \tilde{Q} + Q_\hol(s)\,,
\end{align*}
where $Q_\hol$ is holomorphic near $s = 1/2$.
By \eqref{eq:poisson-model-cusp} and \eqref{eq:resolvent-mc}, the term $M_c(s)$ is given by
\begin{align*}
(2s-1) M_c(s) = (1 - \eta_{c,0}) \rho_c^{s-1} (\rho_c')^{-s} \id_{\pa_c \bundle} (1 - \eta_{c,1}) + (2s-1) M_{\hol}^c(s)\,,
\end{align*}
with $M_{\hol}^c(s)$ being holomorphic near $s = 1/2$. If we set
\begin{align*}
    \tilde{M}_c = (1 - \eta_{c,0}) (\rho_c \rho_c')^{-1/2} \id_{\pa_c \bundle}(1 
- \eta_{c,1}) \,,
\end{align*}
then we have that
\begin{align*}
\ResTwist(s) = (2s-1)^{-1} \left( \tilde{Q} + \tilde{M}_c \right) + H(s)\,
\end{align*}
and $H(s)$ is holomorphic near $s = 1/2$.
Recall also that 
\[
S_{X,\twist}(s) = (S_{X_f,\twist}(s) \oplus 0) + 
(2s-1)Q^\#(s)\,,
\]
where
\begin{align*}
Q^\#(s;\omega,\omega') = \left. (\rho_f \rho_f')^{-s} (\rho_c \rho_c')^{1-s} Q(s;\rho,\omega,\rho',\omega')\right|_{\pa_\infty X \times \pa_\infty X}\,.
\end{align*}
Pick $\tilde{Q}^\#$ such that
\begin{align*}
    (2s - 1) Q^\#(s) = \tilde{Q}^\# + Q^\#_{\hol}(s)\,,
\end{align*}
where $Q^\#_{\hol}(s)$ is holomorphic near $s = 1/2$.
This implies that
\begin{align*}
    \tilde{Q}^\# = \left. (\rho_f \rho_f')^{-1/2} (\rho_c \rho_c')^{1/2} \tilde{Q}(\rho,\omega,\rho',\omega')\right|_{\pa_\infty X \times \pa_\infty X}\,.
\end{align*}
From the Fourier decomposition of $S_{X_f,\twist}(s)$, we see that $S_{X_f,\twist}(1/2) = - \id$. This implies that
\begin{align*}
P &\coloneqq \frac{1}{2} \left( S_{X,\twist}(\tfrac12) + \id \right)
\\
&= \frac{1}{2} \left( (0 \oplus \id_{\pa_c \bundle}) + \tilde{Q}^\#\right)
\end{align*}
is a compact operator. Using 
 \eqref{property_of_scattering_under_adjoint_and_transpose} and Proposition \ref{prop:intertwine}, 
 we calculate $P^2 = P$ and $P^* = P$.

The residue of the resolvent at $s = 1/2$ is given by
\begin{align*}
B(\tfrac12) = \tilde{Q} + \tilde{M}_c\,.
\end{align*}
This implies that
\begin{align*}
    \left. (\rho_f \rho_f')^{-1/2} (\rho_c \rho_c')^{1/2} B(\tfrac12) \right|_{\pa_\infty X \times \pa_\infty X} &= \tilde{Q}^\# + (0 \oplus \id_{\pa_c \bundle}) 
\\
&= 2P\,.
\end{align*}
With $\phi_k$ given by \eqref{eq:resolvent-onehalf}, we set
\begin{align}\label{def:phi_k}
\phi_k^\#(s) \coloneqq \left. \rho_f^{-s} \rho_c^{1-s} \phi_k(s)\right|_{\pa_\infty X}\,,
\end{align}
which defines a function $\phi_k^\#(s) \in \CI(\pa_\infty X, \bundle)$ by \eqref{eq:phik_asymptotics}.
We note that $\phi_k^\#(s)$ is holomorphic in $s$ for $s$ close to $1/2$. 
Further, the functions $\phi_k^\#(1/2)$ are linearly independent since, 
otherwise, a non-trivial linear combination would lead to an $L^2$-integrable 
solution of the eigenvalue equation in contradiction to $\LapTwist$ having no 
eigenvalues at $\lambda = 1/4$.

From \eqref{eq:B(1/2)} and \eqref{def:phi_k} we obtain that
\begin{align*}
\left. (\rho_f \rho_f')^{-1/2} (\rho_c \rho_c')^{1/2} B(\tfrac12) \right|_{\pa_\infty X \times \pa_\infty X} = \sum_{k=1}^{m_{X,\twist}(1/2)} \phi_k^\#(\tfrac12) \ang{\phi_k^\#(\tfrac12), \cdot}\,.
\end{align*}
Thus the restriction of $B(1/2)$ to the boundary at infinity still has rank 
$m_{X,\twist}(1/2)$. This finishes the proof.
\end{proof}

\begin{remark}
The proof also shows that
\begin{align*}
S_{X,\twist}(\tfrac12) = -\id + \sum_{k=1}^{m_{X,\twist}(1/2)} \phi_k^\#(\tfrac12) \ang{\phi_k^\#(\tfrac12), \cdot}\,.
\end{align*}
\end{remark}

\subsection{Scattering Poles}\label{sec:scattering_poles}
Let $s_0 \in \C$ be a resonance, let $\varepsilon > 0$ and let $\gamma_{s_0,\eps}$ be the path 
\begin{equation}\label{def:small_path}
[0,1] \ni t \mapsto s_0 + \eps e^{2\pi i t}.\end{equation}
 We suppose that $\eps$ is small enough such that there is no other resonance 
inside $\gamma_{s_0,\eps}$ rather than $s_0$. Recall that the 
resonance multiplicity of $s_0$ is given as
\begin{align*}
m_{X,\twist}(s_0) \coloneqq \rank \int_{\gamma_{s_0,\eps}} \ResTwist(t)\,dt\,, \quad s_0 \not = \frac12\,.
\end{align*}
The analogues of resonances for a scattering matrix are \emph{scattering poles}. The definition of the multiplicity of a scattering pole is more involved.

\label{GS-theory}
We start with briefly recalling some definitions from the Gohberg--Sigal theory 
\cite{GohbergSigal}: let $\banach$ be a Banach space and let $\lambda_0 \in \C$. 
We further denote by $\mathfrak{A}$ the algebra of all linear bounded operators 
from  $\banach$ to $\banach$. We denote by $\mathcal{M}(\lambda_0)$ 
the germ 
of $\mathfrak{A}$-valued functions that are holomorphic in some punctured 
neighborhood of~$\lambda_0$ and have either a pole or a removable singularity at 
$\lambda_0$. In any concrete situation we will pick a suitable neighborhood.

Let $B \in \mathcal{M}(\lambda_0)$ be holomorphic at least in $\Omega_B 
\setminus \{\lambda_0\}$, 
where $\Omega_B$ is some open neighborhood of $\lambda_0$, and suppose that there exists a function $\psi\colon \Omega_B \to \banach$ such that $\psi(\lambda_0) \neq 0$, the functions $\psi$ and $B \psi$ 
are holomorphic at~$\lambda_0$; moreover, we suppose that $B \psi(\lambda_0) = 
0$. We refer to $\psi(\lambda_0)$ as a \emph{root vector} and to $\psi$ as a 
\emph{root function} of $B$ at~$\lambda_0$. The \emph{rank} of a root vector 
$\psi(\lambda_0)$, further denoted as $\rank(\psi(\lambda_0))$, is the maximal 
order of vanishing of $B(\lambda) \phi(\lambda)$  at $\lambda = \lambda_0$
among all root functions~$\phi$ with $\phi(\lambda_0) = \psi(\lambda_0)$. If 
these orders of vanishing are unbounded, we define $\rank(\psi(\lambda_0)) 
\coloneqq \infty$. The set of all root vectors of~$B$ at~$\lambda_0$ is a vector 
space. 
We refer to its closure in~$\banach$ as the \textit{kernel} of $B(\lambda_0)$ and denote it by $\ker B(\lambda_0)$. 
In what follows, we suppose that $m \coloneqq \dim \ker B(\lambda_0) < \infty$ 
and $\rank(v) < \infty$ for all $v \in \ker B(\lambda_0)$. We define a basis, 
$\{v^{(1)}, \dots, v^{(m)}\}$, of $\ker B(\lambda_0)$ as follows: the rank of 
$v^{(1)}$ equals the maximal rank of all root vectors corresponding to 
$\lambda_0$ and the rank of~$v^{(j)}$ for $j = 2, \ldots, m$ is the maximal rank 
of root vectors in some direct complement of the span $\{v^{(1)}, \dotsc, 
v^{(j-1)}\}$. Let $r_j \coloneqq \rank v^{(j)}$. 
We set
\begin{align*}
N_{\lambda_0}(B) \coloneqq \sum_{j=1}^m r_j\,.
\end{align*}
We also recall (see, e.g., \cite[Definition 6.6]{Borthwick_book}) that a set 
of bounded operators $A(\lambda)$ from $\banach$ to $\banach$, parametrized by 
$\lambda \in U \subset \C$, is a \textit{finitely meromorphic family} if at each 
point $\lambda' \in U$, we have a Laurent series representation,
\[
A(\lambda) = \sum_{k=-m}^{\infty} (\lambda - \lambda')^k A_k,
\]
converging (in the operator topology) in some neighborhood of $\lambda'$, where for $k<0$, the coefficients $A_k$ are finite rank operators.

The main result of Gohberg--Sigal~\cite[Theorem~2.1]{GohbergSigal} is the 
following argument principle:  Let $B \in \mathcal{M}(\lambda_0)$ be such that 
$B$ is invertible in some neighborhood of $\lambda_0$. Suppose that 
$B$ and $B^{-1}$ are finitely meromorphic families of operators in this 
neighborhood of $\lambda_0$. Suppose that all points inside a sufficiently 
small 
contour,~$\gamma$, around $\lambda_0$ (except for, maybe, $\lambda_0$ itself) 
are regular for both $B$ and $B^{-1}$. Additionally, suppose that the 
non-singular part of $B$ at $\lambda_0$ has index zero. Then 
\begin{align}\label{eq:gohberg-sigal}
N_{\lambda_0}(B) - N_{\lambda_0}(B^{-1}) = \frac{1}{2\pi i} \Tr \int_{\gamma} B(\lambda)^{-1} B'(\lambda) \, d\lambda\, .
\end{align}
If for such $B \in \mathcal{M}(\lambda_0)$ we define
\begin{align}\label{eq:definition_of_m}
 M_{\lambda_0}(B) \coloneqq \frac{1}{2\pi i} \Tr \int_{\gamma} B(\lambda)^{-1} B'(\lambda) \, d\lambda\,,
\end{align}
then for all $B_1, B_2 \in 
\mathcal{}(\lambda_0)$ satisfying the conditions above we have 
\begin{align}\label{eq:M-cyclic}
M_{\lambda_0}(B_1 B_2) = M_{\lambda_0}(B_1) + M_{\lambda_0}(B_2)\,.
\end{align}
See \cite[Theorem~5.2]{GohbergSigal}.

From \eqref{eq:smatrix-funnel} and Proposition~\ref{prop:smatrix-structure}, we 
obtain that $S_{X,\twist}(s)$ has poles of infinite rank at $s = 1/2 + \N_0$. 
Hence, we define the operator
\begin{align*}
G(s) &\colon  \CI(\pa_\infty X, \bundle|_{\pa_\infty X}) \to \CI(\pa_\infty X, \bundle|_{\pa_\infty X})\,,
\\
G(s) & \coloneqq ( \gammafunc(s+\tfrac12) \id_{\CI(\pa_f X,\bundle|_{\pa_f X})} ) \oplus \id_{\CI(\pa_c X,\bundle|_{\pa_c X})}\,.
\end{align*}
We want to normalize the scattering matrix such that it is a bounded operator 
for all $s \not \in \ResSet_{X, \twist} \cup (1/2 + \N_0)$. 
Denote by $\Lambda_{\pa_f X}$ the square-root of the Laplacian with respect to the bundle metric -- or any other  invertible elliptic operator $\Lambda_{\pa_f X} \in \Psi^1(\pa_f X, \bundle|_{\pa_f X})$. Set 
\begin{align*}
\Lambda(s) &\colon  \CI(\pa_\infty X, \bundle|_{\pa_\infty X}) \to \CI(\pa_\infty X, \bundle|_{\pa_\infty X})\,,
\\
\Lambda(s) &= \Lambda_{\pa_f X}^{-s+1/2} \oplus \id_{\CI(\pa_c X,\bundle|_{\pa_c X})}\,.
\end{align*}
Note that $\Lambda(s)$ and $G(s)$ commute, and we have that $\Lambda(1-s)^{-1} = \Lambda(s)$.
It follows from Proposition~\ref{prop:smatrix-structure} that
\begin{align}\label{def:Stilde}
    \tilde{S}_{X,\twist}(s) \coloneqq G(s) \Lambda(s) S_{X,\twist}(s) \Lambda(1-s)^{-1} G(1-s)^{-1}
\end{align}
is a meromorphic family of pseudodifferential operators of order $0$ with poles of finite rank. Note that both $G(s)$ and $G(1-s)^{-1}$ are invertible away from $s \in \frac{1}{2} \pm \N$. Moreover, we have that
\begin{align*}
\tilde{S}_{X,\twist}(1-s) = \tilde{S}_{X,\twist}(s)^{-1}
\end{align*}
and  that $\tilde{S}(s)$ is a Fredholm operator by Proposition~\ref{prop:smatrix-structure} and the invertibility of $S_{X_f,\twist}(s)$. We only have to consider the $S_{X,\twist}^{\ff}$, the other entries are finite rank.
As for $S_{X,\twist}(s)$, we can write $\tilde{S}_{X,\twist}(s)$ as a $2 \times 2$ matrix,
\begin{align}\label{eq:tildeS_matrix}
    \tilde{S}_{X,\twist}(s) =
    \begin{pmatrix} 
        \tilde{S}_{X,\twist}^{\ff}(s) & \tilde{S}_{X,\twist}^{\fc} 
        \\
        \tilde{S}_{X,\twist}^{\cf}(s) & \tilde{S}_{X,\twist}^{\cc}
    \end{pmatrix}\,,
\end{align}
where
\begin{align*}
    \tilde{S}_{X,\twist}^{\ff}(s) &\coloneqq \frac{\gammafunc(s+\tfrac12)}{\gammafunc(\tfrac32-s)} \Lambda_{\pa_f X}^{-s+1/2} S_{X,\twist}^{\ff}(s) \Lambda_{\pa_f X}^{-s+1/2}\,,\\
    \tilde{S}_{X,\twist}^{\cf}(s) &\coloneqq \gammafunc(\tfrac32 - s)^{-1} S_{X,\twist}^{\cf}(s) \Lambda_{\pa_f X}^{-s+1/2}\,,\\
    \tilde{S}_{X,\twist}^{\fc}(s) &\coloneqq \gammafunc(s+\tfrac12) \Lambda_{\pa_f X}^{-s+1/2} S_{X,\twist}^{\fc}(s) \,,\\
    \tilde{S}_{X,\twist}^{\cc}(s) &\coloneqq S_{X,\twist}^{\cc}(s) \,.
\end{align*}
The multiplicity of a scattering pole $s_0 \in \C$ is defined as
\begin{align}\label{eq:def_nu}
    \nu_{X,\twist}(s_0) \coloneqq -M_{s_0}(\tilde{S}_{X,\twist}) =  - \frac{1}{2\pi i} \Tr \int_{\gamma} \tilde{S}_{X,\twist}(s)^{-1} \frac{d}{ds} \tilde{S}_{X,\twist}(s) \, ds\,.
\end{align}
By \eqref{eq:M-cyclic} it follows that $\nu_{X,\twist}(s)$ is independent of the 
specific choice of the operator~$\Lambda_{\pa_f X}$.

\begin{lemma}\label{lem:simplify-N}
For $s_0 \in \ResSet_{X,\twist}$ with $\Re s_0 < 1, s_0 \not = 1/2$, we have that
\begin{align*}
N_{1-s_0}(\tilde{S}_{X,\twist}) = N_{1-s_0}(\Lambda S_{X,\twist} \Lambda)\,.
\end{align*}
Moreover, for a resonance $s_0 \in \ResSet_{X,\twist}$  there exists $n^\# > 0$ and $k_j^\# \in \Z$, such that we have the decomposition near $s_0 \in \ResSet_{X,\twist}$, 
\begin{align*}
\Lambda(s) S_{X,\twist}(s) \Lambda(s) = G_1(s) \left( \tilde{P}_0(s) + \sum_{j=1}^{n^\#} (s-s_0)^{-k^\#_j} P_j \right) G_2(s)\,,
\end{align*}
where
$G_1,G_2$ are holomorphically invertible near $s_0 \in \ResSet_{X,\twist}$ and
\begin{align*}
\tilde{P}_0(s) = \begin{cases}
(s - s_0) P_0, & s_0 \in \ResSet_{X,\twist} \cap (\frac12 - \N)\,,
\\
P_0, & s_0 \in \ResSet_{X,\twist} \setminus (\frac12 - \N)
\end{cases}
\end{align*}
and $P_0$ is a projection. 
\end{lemma}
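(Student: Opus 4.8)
The plan is to extract both assertions from the Gohberg--Sigal theory recalled above, fed by the descriptions of $S_{X,\twist}(s)$ near a resonance furnished by Proposition~\ref{prop:smatrix-structure} and Lemma~\ref{lem:smatrix-resonance-decomp}. For the first assertion I would use that $\Lambda(1-s)^{-1}=\Lambda(s)$ to rewrite \eqref{def:Stilde} as $\tilde{S}_{X,\twist}(s)=G(s)\,\bigl(\Lambda(s)S_{X,\twist}(s)\Lambda(s)\bigr)\,G(1-s)^{-1}$. The algebraic input is that $N_{\lambda_0}(\cdot)$ is unchanged under multiplication on the left and on the right by operator families that are holomorphic and invertible near $\lambda_0$: if $\psi$ is a root function of $B$ at $\lambda_0$ and $C,D$ are such families, then $C\psi$ and $D^{-1}\psi$ are root functions of $CB$ and $BD$ of the same rank, and the resulting maps are bijections of the corresponding root-vector spaces. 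Hence everything reduces to $G(s)$ and $G(1-s)^{-1}$ near $s=1-s_0$, both of which act as the identity on the cusp part and by the scalars $\gammafunc(s+\tfrac12)$, resp. $\gammafunc(\tfrac32-s)^{-1}$, on the funnel part. Since $\gammafunc$ has no zeros and only poles at the non-positive integers, and $\Re\bigl((1-s_0)+\tfrac12\bigr)=\tfrac32-\Re s_0>\tfrac12$, the factor $G(s)$ is holomorphically invertible near $s=1-s_0$; the factor $G(1-s)^{-1}$ is holomorphically invertible near $s=1-s_0$ as well unless $s_0\in\tfrac12-\N$, in which case it has a simple zero along the funnel directions there. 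In that case I would note that the funnel block of $\Lambda S_{X,\twist}\Lambda$ carries a matching simple pole at $s=1-s_0$ — coming from the $\gammafunc(\tfrac12-s)$-factor of the model-funnel scattering matrix \eqref{eq:smatrix-funnel} — so that pole and zero cancel and $N_{1-s_0}$ is preserved in all cases, giving $N_{1-s_0}(\tilde{S}_{X,\twist})=N_{1-s_0}(\Lambda S_{X,\twist}\Lambda)$.

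For the second assertion, since $\Lambda(s)$ is holomorphically invertible for all $s$, it suffices to factor $\Lambda(s)S_{X,\twist}(s)\Lambda(s)$ near $s_0$. I would use Proposition~\ref{prop:smatrix-structure} to write $S_{X,\twist}(s)=(S_{X_f,\twist}(s)\oplus 0)+(2s-1)Q^\#(s)$. The operator $S_{X_f,\twist}(s)$ equals, blockwise by \eqref{eq:smatrix-funnel}, the scalar $\gammafunc(\tfrac12-s)/\gammafunc(s-\tfrac12)$ times a Fourier multiplier which, after conjugation by $\Lambda(s)$, is an invertible order-$0$ pseudodifferential operator away from the finitely many Fourier modes at which $s_0$ is a model-funnel resonance; the blocks $Q^\#(s)^{\fc},Q^\#(s)^{\cf},Q^\#(s)^{\cc}$ and $S_{X,\twist}^{\fc},S_{X,\twist}^{\cf},S_{X,\twist}^{\cc}$ are finite rank, $Q^\#(s)^{\ff}$ is smoothing, and all of these are finitely meromorphic near $s_0$ with poles of finite rank. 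Conjugating by $\Lambda(s)$ therefore exhibits $\Lambda(s)S_{X,\twist}(s)\Lambda(s)$ as a finitely meromorphic, index-$0$ Fredholm family near $s_0$ whose ``bulk'' is the scalar $\gammafunc(\tfrac12-s)/\gammafunc(s-\tfrac12)$ times a holomorphically invertible order-$0$ operator; using that every resonance satisfies $\Re s_0\le 1$, that scalar is holomorphically invertible near $s_0$ when $s_0\notin\tfrac12-\N$ and has a simple zero at $s_0$ when $s_0\in\tfrac12-\N$.

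I would then invoke the Gohberg--Sigal local normal form \cite{GohbergSigal} for a finitely meromorphic Fredholm family of index $0$: near $s_0$ it can be written as $G_1(s)\,D(s)\,G_2(s)$ with $G_1,G_2$ holomorphically invertible and $D(s)=P_0'+\sum_j (s-s_0)^{m_j}P_j'$, where the $P_j'$ are mutually orthogonal rank-one projections with $m_j\in\Z\setminus\{0\}$ and $P_0'$ is a cofinite-rank projection. Folding the scalar ``bulk'' factor of the previous paragraph into $D$ raises the exponent of the cofinite-rank projection from $0$ to $1$ exactly when $s_0\in\tfrac12-\N$ — this is precisely the alternative $\tilde{P}_0(s)=P_0$ versus $\tilde{P}_0(s)=(s-s_0)P_0$ — while relabelling the remaining exponents as $-k_j^\#$ with $k_j^\#\in\Z$ and grouping the $P_j'$ into finitely many projections $P_1,\dots,P_{n^\#}$ produces the asserted decomposition $\Lambda(s)S_{X,\twist}(s)\Lambda(s)=G_1(s)\bigl(\tilde{P}_0(s)+\sum_{j=1}^{n^\#}(s-s_0)^{-k_j^\#}P_j\bigr)G_2(s)$.

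The hard part will be the second assertion: one must verify carefully that $\Lambda(s)S_{X,\twist}(s)\Lambda(s)$ really satisfies the hypotheses of the Gohberg--Sigal normal form near every resonance (finite meromorphy, Fredholmness, and vanishing of the index of the non-singular part), and one has to keep precise track of the scalar $\gammafunc$-ratio, both to locate the extra factor $(s-s_0)$ correctly in the $s_0\in\tfrac12-\N$ case and to identify $P_0$ as the projection onto the ``bulk'' of the funnel directions. The same pole--zero cancellation has to be managed consistently in the exceptional case of the first assertion.
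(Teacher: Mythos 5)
Your non-exceptional case is fine and matches the paper: since $\Re s_0<1$, both $G(s)$ and $G(1-s)^{-1}$ are holomorphically invertible at $s=1-s_0$ when $s_0\notin\tfrac12-\N$, and $N_{1-s_0}$ is invariant under multiplication by such factors. The gap is in the exceptional case $s_0\in\tfrac12-\N$, and it is a real one. You say ``pole and zero cancel and $N_{1-s_0}$ is preserved,'' but this does not follow from the structure you set up, because the factor $G(1-s)^{-1}=\Gamma(\tfrac32-s)^{-1}\id_{\CI(\pa_f X)}\oplus\id_{\CI(\pa_c X)}$ vanishes at $s=1-s_0$ on the entire (infinite-dimensional) funnel block. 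Consequently $G(1-s)^{-1}$ is not itself a finitely meromorphic Fredholm family invertible in a punctured neighborhood of $1-s_0$: it has an infinite-dimensional set of root vectors there, so $N_{1-s_0}$ and $M_{1-s_0}$ are not defined for this factor and you cannot invoke the multiplicativity~\eqref{eq:M-cyclic} (or a root-function bijection argument) to cancel its zero against the pole of $\Lambda S_{X,\twist}\Lambda$. The mapping $\psi\mapsto G(1-s)^{-1}\psi$ need not send root functions of the triple product to root functions of $\Lambda S_{X,\twist}\Lambda$: a root vector of the former whose cusp component vanishes at $1-s_0$ gets sent to something that vanishes entirely at $1-s_0$, so the correspondence of ranks breaks down.

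The paper closes precisely this gap using the block LDU (Schur-complement) factorization of Borthwick's Lemma~8.12: writing $T(s)\coloneqq \tilde{S}^{\cc}-\tilde{S}^{\cf}(\tilde{S}^{\ff})^{-1}\tilde{S}^{\fc}$, one factors both $\tilde{S}_{X,\twist}$ and $\Lambda S_{X,\twist}\Lambda$ as $L\cdot\operatorname{diag}(\ast,T)\cdot U$ with $L,U$ holomorphically invertible near $1-s_0$; the two diagonal $\ff$-entries are respectively $\id$ and the scalar $\Gamma(\tfrac32-s)/\Gamma(s+\tfrac12)$, the latter having a pole at $1-s_0$ and hence no root vectors. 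So both $N_{1-s_0}$ reduce to $N_{1-s_0}(T)$, a genuinely finite-dimensional object. This is the step your argument is missing: the funnel-bulk scalar has to be isolated from the finite-rank cusp and cross blocks before any cancellation argument is legitimate.

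The same issue recurs in your treatment of the second assertion. You propose to ``fold the scalar bulk factor into $D$'' to shift the exponent on $P_0$, but the scalar $\Gamma(\tfrac12-s)/\Gamma(s-\tfrac12)$ sits only inside $S_{X_f,\twist}$ and hence only scales the $\ff$ block; the $\fc,\cf,\cc$ blocks and the smoothing correction $Q^\#(s)^{\ff}$ do not carry it, so the scalar cannot be pulled out of $\Lambda S_{X,\twist}\Lambda$ as a global factor without a Schur-type reduction. Once you carry that out, your plan of applying the Gohberg--Sigal Smith normal form and regrouping exponents is exactly what the paper does (its proof of the second part is terse -- it simply invokes Gohberg--Sigal -- so your fleshing-out of the normal-form step is welcome), but the reduction step must come first.
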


\begin{proof}
The first part of the statement for $s_0 \not \in \frac{1}{2} - \N$ follows from 
\eqref{def:Stilde} and the remark afterwards. 
Now let us consider $s_0 \in \frac{1}{2}-\N$ for which we follow \cite[Lemma 
8.12]{Borthwick_book}. 
We set
\[
T(s) \coloneqq \tilde{S}^{\cc}(s) - \tilde{S}^{\cf}(s) \tilde{S}^{\ff}(s)^{-1} \tilde{S}^{\fc}(s)
\]
and note that it is well-defined near $1-s_0$.
We can then write
\[
\tilde{S}_{X}(s)=\left(\begin{array}{cc}
\id & 0 \\
\tilde{S}^{\cf}(s) \tilde{S}^{\ff}(s)^{-1} & \id 
\end{array}\right)\left(\begin{array}{cc}
\id  & 0 \\
0 & T(s)
\end{array}\right)\left(\begin{array}{cc}
\tilde{S}^{\ff}(s) & \tilde{S}^{\fc}(s) \\
0 & \id 
\end{array}\right)
\]
The first and last factors on the right hand side of the previous equation 
are both invertible near $1-s_0$. Together with 
\cite[Section~1]{GohbergSigal} this implies that 
\[
N_{1-s_0}\left(\tilde{S}_{X}\right)= N_{1-s_0}\left(  \left(\begin{array}{cc}
\id  & 0 \\
0 & T
\end{array}\right) \right) = N_{1-s_0}(T).
\]
Moreover
\begin{align*}
    \Lambda(s) S_{X,\twist}(s) \Lambda(s) &=
\left(\begin{array}{cc}
\id & 0 \\
\Gamma\left(s+\frac{1}{2}\right) \tilde{S}^{\cf}(s) \tilde{S}^{\ff}(s)^{-1} & \id
\end{array}\right) \\
&\phantom{=} \times \left(\begin{array}{cc}
\frac{\Gamma\left(\frac{3}{2}-s\right)}{\Gamma\left(s+\frac{1}{2}\right)} \id & 0 \\
0 & T(s)
\end{array}\right)
\left(\begin{array}{cc}
\tilde{S}^{\ff}(s) & \frac{1}{\Gamma\left(\frac{3}{2}-s\right)} \tilde{S}^{\fc}(s) \\
0 & \id
\end{array}\right).
\end{align*}
We note that  the first and third factors of the right hand side of the equality above are invertible near $s=1-s_0.$  Hence,
\[
N_{1-s_0}\left(\Lambda S_{X} \Lambda\right) =N_{1-s_0} \left( \left(\begin{array}{cc}
\frac{\Gamma\left(\frac{3}{2}-s\right)}{\Gamma\left(s+\frac{1}{2}\right)} \id & 0 \\
0 & T(s)
\end{array}\right) \right)
\]
Since $1+s_0 \in -\N_0$, the function $\Gamma(\frac{3}{2}-s)$ is 
singular at $s=1-s_0$ and hence 
$\Gamma\left(\frac{3}{2}-s\right)/\Gamma\left(s+\frac{1}{2}\right)$ is singular 
as well and thus has no root vectors. Therefore 
\[
N_{1-s_0} \left( \left(\begin{array}{cc}
\frac{\Gamma\left(\frac{3}{2}-s\right)}{\Gamma\left(s+\frac{1}{2}\right)} \id & 0 \\
0 & T(s)
\end{array}\right) \right) =N_{1-s_0}(T)
\]
 which implies $N_{1-s_0}\left(\Lambda S_{X} \Lambda\right) = N_{1-s_0}(T)$ and proves the result.

 The second part of the statement follows from the application of the Gohberg-Sigal Logarithmic Residue Theorem, \eqref{eq:gohberg-sigal}, to $\Lambda S_{X,\twist} \Lambda$.
 \end{proof}

We have that 
\[
N_{1-s_0}(\Lambda S_{X,\twist} \Lambda) = \sum_{j \setmid k_j^\# > 0} k_j^\#\,.
\]
Lemma~\ref{lem:smatrix-resonance-decomp} implies that
\begin{align*}
\sum_{j\colon k_j^\# > 0} k^\#_j \leq \sum_{j=1}^n k_j\,.
\end{align*}

\begin{prop}[Relation between scattering poles and resonances]\label{prop:sp-res}
For ${s_0 \in \C}$ with $\Re s_0 \leq 1$ we have 
\begin{align*}
\nu_{X,\twist}(s_0) = m_{X,\twist}(s_0) - m_{X,\twist}(1-s_0)\,.
\end{align*}
\end{prop}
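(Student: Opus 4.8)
The plan is to feed the Gohberg--Sigal argument principle and the functional equation $\tilde{S}_{X,\twist}(1-s)=\tilde{S}_{X,\twist}(s)^{-1}$ (stated right after \eqref{def:Stilde}) into each other. Applying \eqref{eq:gohberg-sigal} to $B=\tilde{S}_{X,\twist}$ near $s_0$ and using \eqref{eq:def_nu} gives $N_{s_0}(\tilde{S}_{X,\twist})-N_{s_0}(\tilde{S}_{X,\twist}^{-1})=M_{s_0}(\tilde{S}_{X,\twist})=-\nu_{X,\twist}(s_0)$, while the substitution $s\mapsto 1-s$ in root functions turns $\tilde{S}_{X,\twist}^{-1}=\tilde{S}_{X,\twist}(1-\cdot)$ into $N_{s_0}(\tilde{S}_{X,\twist}^{-1})=N_{1-s_0}(\tilde{S}_{X,\twist})$, so that $\nu_{X,\twist}(s_0)=N_{1-s_0}(\tilde{S}_{X,\twist})-N_{s_0}(\tilde{S}_{X,\twist})$. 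The same computation together with \eqref{eq:M-cyclic} applied to $\tilde{S}_{X,\twist}\cdot\tilde{S}_{X,\twist}^{-1}=\id$ yields $\nu_{X,\twist}(1-s_0)=-\nu_{X,\twist}(s_0)$, so both sides of the claimed identity change sign under $s_0\mapsto 1-s_0$; at $s_0=\tfrac12$ the identity reduces to $0=0$, since there $G$ and $\Lambda$ are the identity and $\tilde{S}_{X,\twist}(\tfrac12)=S_{X,\twist}(\tfrac12)=-\id+2P$ is an invertible involution by Theorem~\ref{thm:smatrix-onehalf}, hence $\nu_{X,\twist}(\tfrac12)=0$.

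It therefore suffices to prove $N_{1-t_0}(\tilde{S}_{X,\twist})=m_{X,\twist}(t_0)$ for every $t_0\neq\tfrac12$ with $\Re t_0<1$: taking $t_0=s_0$ handles the first term above and $t_0=1-s_0$ the second (when $\Re t_0\ge 1$, which can happen only for $\Re s_0\le 0$, one checks directly that $t_0$ is not a resonance and that $N_{1-t_0}(\tilde{S}_{X,\twist})=0$ by the functional equation, so both sides are $0$). If $t_0\notin\ResSet_{X,\twist}$, then $\tilde{S}_{X,\twist}$ is holomorphic near $1-t_0$ and invertible there by the functional equation, so both sides vanish. If $t_0\in\ResSet_{X,\twist}$, Lemma~\ref{lem:simplify-N} reduces the claim to $N_{1-t_0}(\Lambda S_{X,\twist}\Lambda)$, and the decomposition of $\Lambda S_{X,\twist}\Lambda$ near $t_0$ from that lemma, transported to $1-t_0$ by the relation $\Lambda(1-s)S_{X,\twist}(1-s)\Lambda(1-s)=\bigl(\Lambda(s)S_{X,\twist}(s)\Lambda(s)\bigr)^{-1}$ (a consequence of the functional equation and $\Lambda(1-s)=\Lambda(s)^{-1}$), exhibits a Smith-type normal form near $1-t_0$ whose positive partial multiplicities are exactly the $k_j^\#$ with $k_j^\#>0$. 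Thus $N_{1-t_0}(\tilde{S}_{X,\twist})=\sum_{j\colon k_j^\#>0}k_j^\#\le\sum_{j=1}^{n}k_j=m_{X,\twist}(t_0)$, the last equality being part of Lemma~\ref{lem:smatrix-resonance-decomp}.

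The heart of the matter is the reverse inequality, i.e.\ that the pole of the scattering matrix at $t_0$ carries the full algebraic multiplicity $m_{X,\twist}(t_0)$ with no cancellation. Here I would use the explicit factorization of the singular part of $S_{X,\twist}$ at $t_0$ through $\C^{m_{X,\twist}(t_0)}$ from Lemma~\ref{lem:smatrix-resonance-decomp}, namely $(\Phi^\#)^T E(s,t_0)\,D(s)\,F(s,t_0)\Phi^\#$ with $D(s)=\sum_{j=1}^{n}(s(1-s)-t_0(1-t_0))^{-k_j}P_j$ and total pole order $\sum_j k_j=m_{X,\twist}(t_0)$: since $E,F,G,\Lambda$ are holomorphically invertible near $t_0$, the negative partial multiplicities of $\tilde{S}_{X,\twist}$ at $t_0$ equal those of $D$ — hence sum to $m_{X,\twist}(t_0)$, and therefore so does $N_{1-t_0}(\tilde{S}_{X,\twist})$ — provided the outer maps $(\Phi^\#)^T$ and $\Phi^\#$ do not degenerate, i.e.\ provided the boundary functions $\phi_\ell^\#\in\CI(\pa_\infty X,\bundle|_{\pa_\infty X})$ are linearly independent. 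To prove this, suppose $\sum_\ell c_\ell\phi_\ell^\#=0$ and put $u:=\sum_\ell c_\ell\phi_\ell$; by \eqref{eq:resolvent-A_1} and Theorem~\ref{thm:resolvent}, $u$ lies in the range of $A_1(t_0)$, so $(\LapTwist-t_0(1-t_0))^p u=0$ for some $p\in\N$ (the order of the resonance at $t_0$), and $u|_{X_f}\in\rho_f^{t_0+1}\CI(\overline{X_f},\bundle)$ since the leading funnel coefficient of $u$ is $\sum_\ell c_\ell\phi_\ell^\#|_{\pa_f X}=0$. For $t_0\notin-\N_0/2$ the computation \eqref{eq:funnellaplace_rho} shows that $\LapTwist-t_0(1-t_0)$ maps $\rho_f^{t_0+a}\CI(\overline{X_f},\bundle)$ into itself and acts on the leading coefficient by the nonzero scalar $a(1-2t_0-a)$ for every $a\ge 1$; iterating this one first reduces $(\LapTwist-t_0(1-t_0))^{p-1}u$ to a genuine eigenfunction whose leading funnel coefficient still vanishes, hence $\equiv 0$ by Proposition~\ref{prop:unique-cont}, and then descends successively to $(\LapTwist-t_0(1-t_0))^{p-2}u$, and so on, until $u\equiv 0$; since the $\phi_\ell$ are linearly independent this forces all $c_\ell=0$. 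The finitely many exceptional points $t_0\in-\N_0/2\cap\ResSet_{X,\twist}$, where Proposition~\ref{prop:unique-cont} does not apply, would be treated separately. I expect this non-degeneracy step — recasting the absence of cancellation in the singular part of $S_{X,\twist}$ as linear independence of the boundary traces and then pushing the unique continuation result of Proposition~\ref{prop:unique-cont} from solutions to generalized eigenfunctions — to be the main obstacle.
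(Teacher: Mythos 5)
Your framework — Gohberg--Sigal, the reduction via Lemma~\ref{lem:simplify-N} and Lemma~\ref{lem:smatrix-resonance-decomp}, the antisymmetry under $s_0\mapsto 1-s_0$, and the direct treatment of $s_0=1/2$ via unitarity — coincides with the paper's, including the easy inequality $N_{1-t_0}(\tilde{S}_{X,\twist})\le m_{X,\twist}(t_0)$. The genuine divergence is in the hard inequality $m_{X,\twist}(t_0)\le N_{1-t_0}(\tilde{S}_{X,\twist})$.

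You propose to establish it by proving that $\Phi^\#$ has full rank, that is, that the boundary traces $\phi_\ell^\#$ are linearly independent, via a bootstrap of Proposition~\ref{prop:unique-cont}: if $u=\sum_\ell c_\ell\phi_\ell$ has vanishing funnel trace then $u\in\rho_f^{t_0+1}\CI$, the chain $u_j=(\LapTwist-t_0(1-t_0))^j u$ stays in $\rho_f^{t_0+1}\CI$ because the indicial factor $a(1-2t_0-a)$ is nonzero for $a\ge 1$ and $t_0\notin -\N_0/2$, and one kills the $u_j$ successively by unique continuation. This is a genuinely different route from the paper. The authors explicitly decline it, writing that ``since the operator $\Phi^\#$ \ldots\ might not have full rank, we cannot directly deduce equality,'' and instead run a case split on whether $s_0(1-s_0)$ lies in the discrete spectrum and on the sign of $\Rea s_0-\tfrac12$. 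In the generic case ($s_0(1-s_0)\notin\sigma_d$) the paper uses that $\ResTwist(1-s)$ is \emph{holomorphic} near $s_0$, so \eqref{eq:resolvent-difference-smatrix} together with the Smith-type decomposition from Lemma~\ref{lem:simplify-N} bounds $\operatorname{rank} A_1$ directly, with no unique continuation; when $s_0(1-s_0)\in\sigma_d$ it uses unique continuation only to show $\operatorname{ran}A_1(s_0)\cap\ker(\LapTwist-s_0(1-s_0))=\{0\}$ and then conjugates by a projection $\Pi$. Your unified ``$\Phi^\#$ full rank'' argument eliminates the case split on $\sigma_d$ entirely and is tidier when it applies, and (as in the paper's case~$\Rea s_0>1/2$) the two arguments coincide there.

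However, the gap you flag is real and unresolved. Proposition~\ref{prop:unique-cont} needs $t_0\notin -\N_0/2$, yet $-\N_0/2\cap\ResSet_{X,\twist}$ is nonempty — for instance, by \eqref{eq:resonances_funnel}, every negative odd integer is a model-funnel resonance, and these propagate to resonances of $X$. Your proposal says these exceptional points ``would be treated separately'' without saying how, so the proof of $m_{X,\twist}(t_0)\le N_{1-t_0}(\tilde{S}_{X,\twist})$ is incomplete precisely there. Note that the paper's ``$s_0(1-s_0)\notin\sigma_d$'' argument covers exactly this residue: for $t_0\in -\N_0/2$ one has $t_0(1-t_0)\le 0$, which is never an $L^2$-eigenvalue on an infinite-volume surface, so the holomorphicity of $\ResTwist(1-s)$ gives the bound with no reference to the indicial polynomial. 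So the two strategies are complementary rather than interchangeable, and you would need either a separate argument at $-\N_0/2$ or to fall back on the paper's route there.
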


\begin{proof}
    First, we note that ${S_{X,\twist}(1/2) = \tilde{S}_{X,\twist}(1/2)}$ is unitary, and therefore $\nu_{X,\twist}(1/2) = 0$ by \eqref{eq:def_nu}.
    Moreover, $m_{X,\twist}(1/2) - m_{X,\twist}(1 - 1/2) = 0$, which implies the claimed equality for $s_0 = 1/2$.
Therefore it suffices to consider a resonance $s_0 \in \C$ with $\Re s_0 < 
1$ and $s_0 \not = 1/2$.
By \eqref{eq:gohberg-sigal}, we have that
\begin{align*}
    \nu_{X,\twist}(s_0) &= -M_{s_0}(\tilde{S}_{X,\twist}) 
   = N_{1-s_0}(\tilde{S}_{X,\twist}) - N_{s_0}(\tilde{S}_{X,\twist})\,.
\end{align*}
It remains to show that $m_{X,\twist}(s_0) = N_{1-s_0}(\tilde{S}_{X,\twist})$.
Note that the inequality $m_{X,\twist}(s_0) \geq N_{1-s_0}(\tilde{S}_{X,\twist})$ follows 
from
\begin{align*}
N_{1-s_0}(\Lambda S_{X,\twist} \Lambda) = \sum_{j\colon k_j^\# > 0} k^\#_j \leq \sum_{j=1}^n k_j = m_{X,\twist}(s_0)\,.
\end{align*}
Since the operator $\Phi^\#$ in Lemma~\ref{lem:smatrix-resonance-decomp} might
not have full rank, we cannot directly deduce equality.
To prove $m_{X,\twist}(s_0) \leq N_{1-s_0}(\tilde{S}_{X,\twist})$, we have to 
use \eqref{eq:resolvent-difference-smatrix}. Assume that $s_0(1-s_0) $ does not belong to the discrete spectrum of $\LapTwist$. Then we have that $\Re s_0 < 1/2$ and $S_{X,\twist}(s)$ is 
holomorphic near $1 - s_0$ by definition. Thus, $\tilde{S}_{X,\twist}(s)$ is 
holomorphic near $1 - s_0$ and hence $N_{s_0}(\tilde{S}_{X,\twist}) = 0$, which 
follows by using 
that $\tilde{S}_{X,\twist}(s) \tilde{S}_{X,\twist}(1-s) = \id$.
By Lemma~\ref{lem:simplify-N} and \eqref{eq:resolvent-difference-smatrix}, we 
have that
\begin{align*}
\ResTwist(s) & = \ResTwist(1-s) + (2s-1) E_{X,\twist}(1-s) \Lambda(s)^{-1} 
G_1(s) \\
& \quad \times \left( \tilde{P}_0(s) + \sum_{j=1}^{n^\#} (s-s_0)^{-k^\#_j} P_j 
\right) G_2(s) \Lambda(s)^{-1} E_{X,\twist}(1-s)^T\,.
\end{align*}
Since all terms except for the factors $(s- s_0)^{-k^\#_j}$ are holomorphic and 
the $P_j$ have rank $1$, we have an upper bound for the rank of the residue 
$A_1$ of $\ResTwist(s)$ in \eqref{eq:resolvent-at-resonance},
\begin{align*}
m_{X,\twist}(s_0) = \rank A_1 \leq \sum_{j\colon k_j^\# > 0} k^\#_j = N_{1-s_0}(\Lambda S_{X,\twist} \Lambda)\,.
\end{align*}
If $s_0(1-s_0)$ belongs to the discrete spectrum of $\LapTwist$, we consider separately 
the two cases $\Re s_0 > 1/2$ and $\Re s_0 < 1/2$. 

Let $\Re s_0 > 1/2$. The resolvent estimate implies that the order of the 
resonance at $s_0$ is $1$. Straightforward argumentation shows that $A_1$ 
in \eqref{eq:resolvent-at-resonance} is the projection onto the eigenspace. Let 
$(\phi_i)_{i=1}^{m_{X,\twist}(s_0)}$ be an orthonormal basis of the eigenspace 
and set
\begin{align*}
\phi_i^\# \coloneqq \lim_{\rho \to 0} \rho_f^{-s} \rho_c^{1-s} \phi_i \in \CI(\pa_\infty X, \bundle|_{\pa_\infty X})\,.
\end{align*}
The functions $\phi_i^\#$, $i\in\{1,\ldots,m\}$, are linearly independent, by a 
straightforward contradiction argument using Proposition~\ref{prop:unique-cont}. 
The Laurent expansion of $S_{X,\twist}(s)$ takes the form
\begin{align*}
S_{X,\twist}(s) = - (s- s_0)^{-1} \sum_{i = 1}^{m_{X,\twist}(s_0)} \phi_i^\# \ang{\phi_i^\#, \cdot} + H_1(s)\,,
\end{align*}
where $H_1$ is holomorphic near $s=s_0$. Hence, $\tilde{S}_{X,\twist}^{-1}$ has $m_{X,\twist}(s_0)$ independent root vectors of rank $1$ at $s = s_0$.

Let $s_0(1-s_0) \in \sigma_d(\LapTwist)$ with $\Re s_0 < 1/2$. For 
$i\in\{1,\ldots,m\}$, let $\phi_i$ and $\phi_i^\#$ be as above. Denote the span 
of $\{\phi_i\}_{i=1}^m$ by $W$. Since $\Re s_0 < 1/2$, we have that $W \subset 
\rho_f^{1-s_0} \rho_c^{-s_0} \CI(\overline{X}, \bundle)$. Using Taylor expansion 
of $\rho_f^s \rho_c^{1-s}$ as a function of $s(1-s)$ near $s_0(1-s_0)$, we have 
that
\begin{align*}
\ran A_1(s_0) \subset \sum_{k=0}^{p-1} \rho_f^{s_0} \rho_c^{s_0-1} \CI(\overline{X}, \bundle)\,.
\end{align*}
Using the unique continuation again, it follows that $\ran A_1(s_0)$ and $W$ 
are disjoint. Therefore there exists a decomposition $\rho^{-1} L^2 = W 
\oplus W'$ with
$\ran A_1(s_0) \subset W'$. Denote by $\Pi$ the projection onto $W'$ with $\ker \Pi = W$. We have that $\phi_i \Pi = 0$ and $\Pi A = A$. The Laurent expansion of $\ResTwist(1-s)$ near $s_0$ is given by
\[
\ResTwist(s) = (s-s_0)^{-1} R_{-1} + R_{\hol}(s)\,,
\]
where $R_{\hol}$ is holomorphic. To calculate the residue, we note that
\[
s(1-s) - s_0(1-s_0) = - (s-s_0) (2s_0 - 1 + (s - s_0))
\]
and hence
\begin{align*}
R_{-1} = -\res_{s = s_0} \ResTwist(s) = (2s_0 - 1)^{-1} \sum_i \phi_k \ang{\phi_k, \cdot}\,.
\end{align*}
We define the Laurent expansions
\begin{align*}
(2s-1) E_{X,\twist}(1-s) \Lambda(s)^{-1} G_1(s) \eqqcolon \sum_{l = -1}^\infty 
(s-s_0)^l E_l\,,
\\
G_2(s) \Lambda(s)^{-1} E_{X,\twist}(1-s)^T \eqqcolon \sum_{m=-1}^\infty 
(s-s_0)^m F_m\,.
\end{align*}
The principal parts of these Laurent expansions are given by
\begin{align*}
E_{-1} = \sum_i \phi_i \ang{e_i, \cdot}\,,
\\
F_{-1} = \sum_i f_i \ang{\phi_i, \cdot}\,,
\end{align*}
for some $e_i, f_i \in \CI(\overline{X}, \bundle)$. Consequently,
\begin{align*}
\Pi R_{-1} = 0\,,\quad \Pi E_{-1} = 0\,,\quad\text{and}\quad 
F_{-1} \Pi^T = 0\,.
\end{align*}
The residue at $s_0$ can be calculated as
\begin{align*}
A_1(s_0) = \res_{s_0} \ResTwist = R_{-1} + \sum_{k_j^\# + l + m = -1} E_l P_j F_m\,.
\end{align*}
Conjugating by $\Pi$ yields
\begin{align*}
A_1(s_0) = \Pi A_1(s_0) \Pi^T = \sum_{j\colon k_j > 0} \sum_{l = 0}^{k_j^\# -1} \Pi E_l P_j F_{k_j-1-l} \Pi^T\,.
\end{align*}
Hence,
\begin{align*}
m_{X,\twist}(s_0) = \rank A_1(s_0) \leq \sum_{j\colon k_j^\# > 0} k_j^\# = N_{1-s_0}(\tilde{S}_{X,\twist})\,.
\end{align*}
\end{proof}

\subsection{Relative Scattering Matrix}\label{sec:relative_scattering_matrix}
The relative scattering matrix, defined by
\begin{align}\label{def:relative_scat_matrix}
S_{X,\twist}^\rel(s) \coloneqq \left(S_{X_f,\twist}(s)^{-1} \oplus 
(-\id)\right) S_{X,\twist}(s)\,,
\end{align}
is a smoothing operator on $\CI(\pa_\infty X,\bundle|_{\pa_\infty X})$. 
Therefore it makes sense to define the \emph{relative scattering 
determinant}  
\begin{align}\label{def:scattering_determinant}
\tau_{X,\twist}(s) \coloneqq \det S_{X,\twist}^\rel(s)\,.
\end{align}
The relation \eqref{eq:scattering-matrix-intertwine} implies that
\begin{align}\label{eq:sd-functional}
\tau_{X,\twist}(s) \tau_{X,\twist}(1-s) = 1
\end{align}
and thus
\begin{align}\label{eq:sd-unitary}
\abs{\tau_{X,\twist}(s)} = 1 \qquad\text{for $\Re s = \frac12$}\,.
\end{align}
Let
\[
E_2(s) \coloneqq (1-s) \exp\left(s+ \frac{s^2}{2}\right)\,.
\]
By \cite[Theorem~B]{DFP} and \cite[Theorem~2.6.5]{Boas_entire}, the Weierstrass product
\begin{align}\label{eq:weierstrass-product}
    \ProdTwist(s) \coloneqq s^{m_{X,\twist}(0)} \prod_{\mu \in \ResSet_{X,\twist}\setminus \{0\}} E_2\left(\frac{s}{\mu}\right)
\end{align}
is well-defined and holomorphic of order $2$.

The Weierstrass product~$\ProdModel(s)$ for~$X_f$ is defined analogously, only exchanging~$X$ for~$X_f$ in~\eqref{eq:weierstrass-product}, i.e.,
\begin{align}\label{eq:weierstrass-product_funnel}
    \ProdModel(s) \coloneqq s^{m_{X_f,\twist}(0)} \prod_{\mu \in \ResSet_{X_f,\twist}\setminus \{0\}} E_2\left(\frac{s}{\mu}\right)\,,
\end{align}
We recall that $\ResSet_{X_f,\twist}$ is given by \eqref{eq:resset-ends} and for one funnel end, the resonances are given by \eqref{eq:resonances_funnel}.
As in the untwisted case (see~\cite[Proposition~2.14]{GuZw97}) we prove the 
following result. 

\begin{prop}\label{prop:tau-factorization}
The relative scattering determinant admits a factorization
\begin{align}\label{eq:tau-factorization}
\tau_{X,\twist}(s) = e^{q(s)} \frac{\ProdTwist(1-s)}{\ProdTwist(s)} \frac{\ProdModel(s)}{\ProdModel(1-s)}\,,
\end{align}
where $q\colon \C \to \C$ is an entire function.
\end{prop}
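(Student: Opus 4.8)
The plan is to show that the meromorphic function
\[
F(s)\coloneqq \tau_{X,\twist}(s)\,\frac{\ProdTwist(s)\,\ProdModel(1-s)}{\ProdTwist(1-s)\,\ProdModel(s)}
\]
is entire and nowhere vanishing; since $\C$ is simply connected this forces $F=e^{q}$ with $q\coloneqq\log F$ entire, which is exactly~\eqref{eq:tau-factorization}. First note that $\tau_{X,\twist}$ is meromorphic on $\C$: as $S_{X,\twist}^\rel(s)-\id$ is smoothing, hence trace class, and depends meromorphically on $s$ with finite‑rank principal parts, its Fredholm determinant is meromorphic, and $\tau_{X,\twist}\not\equiv 0$ by~\eqref{eq:sd-unitary}. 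Writing $M_{s_0}$ for the Gohberg--Sigal multiplicity of~\eqref{eq:definition_of_m} --- which for a scalar meromorphic function is just its order at $s_0$ --- it suffices to verify $M_{s_0}(F)=0$ for every $s_0\in\C$. Both $\tau_{X,\twist}$ and the Weierstrass ratio $R(s)\coloneqq\ProdTwist(1-s)\ProdModel(s)\big/\bigl(\ProdTwist(s)\ProdModel(1-s)\bigr)$ satisfy $g(s)g(1-s)=1$ (for $\tau_{X,\twist}$ this is~\eqref{eq:sd-functional}, for $R$ it is immediate), hence so does $F=\tau_{X,\twist}/R$, so $M_{s_0}(F)=-M_{1-s_0}(F)$ and it is enough to treat $s_0$ with $\Re s_0\le 1$.

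Fix such an $s_0$. From~\eqref{def:relative_scat_matrix} one has $S_{X,\twist}(s)=\bigl(S_{X_f,\twist}(s)\oplus(-\id)\bigr)\,S_{X,\twist}^\rel(s)$; a short computation using~\eqref{def:Stilde} then gives $\tilde S_{X,\twist}(s)=\bigl(\tilde S_{X_f,\twist}(s)\oplus(-\id)\bigr)\,U(s)S_{X,\twist}^\rel(s)U(s)^{-1}$ with $U(s)=G(1-s)\Lambda(1-s)$, where $\tilde S_{X_f,\twist}$ denotes the normalisation of $S_{X_f,\twist}$ by the \emph{same} $\Gamma$‑factors and powers of $\Lambda_{\pa_f X}$. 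The last two factors form a similarity transform of $S_{X,\twist}^\rel(s)$, hence are again $\id$ plus trace class with Fredholm determinant $\tau_{X,\twist}(s)$. For $\Re s_0\le 1$ all three families lie in $\mathcal M(s_0)$, are invertible in a punctured neighbourhood of $s_0$ with finitely meromorphic inverses and have index‑zero nonsingular parts (using Proposition~\ref{prop:smatrix-structure}, the invertibility of $S_{X_f,\twist}(s)$, the relation $\tilde S_{X,\twist}(1-s)=\tilde S_{X,\twist}(s)^{-1}$, and that $U(s)$ is holomorphically invertible near $s_0$ since $\Re s_0<3/2$), so multiplicativity~\eqref{eq:M-cyclic} of $M_{s_0}$, together with $M_{s_0}(-\id)=0$, conjugation‑invariance of $M_{s_0}$, and the identity $M_{s_0}(\id+K)=M_{s_0}(\det(\id+K))$, yields
\[
M_{s_0}(\tau_{X,\twist})=M_{s_0}(\tilde S_{X,\twist})-M_{s_0}(\tilde S_{X_f,\twist})\,.
\]
Now $M_{s_0}(\tilde S_{X,\twist})=-\nu_{X,\twist}(s_0)$ by~\eqref{eq:def_nu}, and Proposition~\ref{prop:sp-res} gives $\nu_{X,\twist}(s_0)=m_{X,\twist}(s_0)-m_{X,\twist}(1-s_0)$. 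For the model term I would read off the explicit Fourier multipliers: for $\Re s_0\le 1$ the only zeros and poles of the coefficients in~\eqref{eq:Fell_spole} come from the $\Gamma$‑poles of $\beta_{k+\vartheta_j}$, so by~\eqref{eq:resonances_funnel} and Proposition~\ref{prop:Fell_spole_resonance} we get $M_{s_0}(\tilde S_{X_f,\twist})=-m_{X_f,\twist}(s_0)$ (this is independent of the chosen normalisation by~\eqref{eq:M-cyclic}), while $m_{X_f,\twist}(1-s_0)=0$ because all funnel resonances have negative real part. Combining, $M_{s_0}(\tau_{X,\twist})=m_{X,\twist}(1-s_0)-m_{X,\twist}(s_0)+m_{X_f,\twist}(s_0)-m_{X_f,\twist}(1-s_0)=M_{s_0}(R)$, so $M_{s_0}(F)=0$ for all $s_0$ with $\Re s_0\le 1$ and hence, by the reflection above, for all $s_0$. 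Thus $F=e^{q}$ with $q$ entire.

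The part I expect to require the most care is the bookkeeping around the normalised scattering matrices and the verification of the Gohberg--Sigal hypotheses. One has to ensure that the factors $\Gamma(s+\tfrac12)$, $\Gamma(\tfrac32-s)$ and the complex powers $\Lambda_{\pa_f X}^{\pm(1/2-s)}$ enter $\tilde S_{X,\twist}$ and $\tilde S_{X_f,\twist}\oplus(-\id)$ in literally the same positions so that they cancel in the difference of the multiplicities, and one must check --- both at resonances and at the points $\tfrac12+\N_0$, where $S_{X,\twist}$ has infinite‑rank poles but $\tilde S_{X,\twist}$ does not --- that all families involved really are finitely meromorphic with finitely meromorphic inverses and index‑zero nonsingular parts, which is exactly the point of passing to the renormalised matrices. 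A secondary point is to record explicitly the identity $M_{s_0}(\id+K)=M_{s_0}(\det(\id+K))$ used for the conjugated relative matrix; this is the argument principle for Fredholm determinants (Jacobi's formula), but since $S_{X,\twist}^\rel$ is $\id$ plus a smoothing --- not merely a finite‑rank --- operator it should be invoked rather than taken for granted.
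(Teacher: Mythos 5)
Your proposal follows essentially the same route as the paper: rewrite $\tau_{X,\twist}$ via the conjugation identity $S_{X,\twist}^\rel(s)=G(1-s)^{-1}\Lambda(s)\,\tilde S_{X_f,\twist}(s)^{-1}\tilde S_{X,\twist}(s)\,\Lambda(s)^{-1}G(1-s)$, use Gohberg--Sigal multiplicativity and the argument principle for Fredholm determinants to reduce $M_{s_0}(\tau_{X,\twist})$ to $M_{s_0}(\tilde S_{X,\twist})-M_{s_0}(\tilde S_{X_f,\twist})$, and then invoke Proposition~\ref{prop:sp-res} together with Proposition~\ref{prop:Fell_spole_resonance}, finishing by the functional equation $g(s)g(1-s)=1$. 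The only cosmetic difference is that the paper treats $\Re s=\tfrac12$ as a separate case via~\eqref{eq:sd-unitary} and works on $\Re s<\tfrac12$, whereas you absorb the critical line into the half-plane $\Re s_0\le 1$ and let Proposition~\ref{prop:sp-res} handle it.
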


\begin{proof}
We set 
\begin{equation}\label{eq:auxmap_prodquot}
 h(s) \coloneqq \frac{\ProdTwist(1-s)}{\ProdTwist(s)} \frac{\ProdModel(s)}{\ProdModel(1-s)}
\end{equation}
for any~$s\in\C$, for which the map on the right hand side is defined.
Then $h$ is meromorphic on all of~$\C$, as is the map~$\tau_{X,\twist}$.
It suffices to show that the zeros and poles of the two maps~$h$ and~$\tau_{X,\twist}$ coincide, including their multiplicities.
We first consider $s\in\C$ with $\Rea s = 1/2$. If $s$ is a resonance of~$X$ (or~$X_f$), and hence contributes to the divisor of some of the Weierstrass products in~\eqref{eq:auxmap_prodquot},
then also $1-s$ is a resonance of~$X$ (or~$X_f$, respectively) with the same multiplicity as~$s$.
Therefore the total contribution of~$s$ to the divisor of the quotient of the Weierstrass functions in~\eqref{eq:auxmap_prodquot} cancels.
Thus, $h$ does not have a zero or pole at~$s$.
From~\eqref{eq:sd-unitary} it follows that the same is true for~$\tau_{X,\twist}$.

We consider now $s\in\C$ with~$\Rea s<1/2$ and show that the multiplicities 
of~$s$ as a zero or pole of~$h$ and~$\tau_{X,\twist}$ coincide. Since 
$\tau_{X,\twist}(1-s) = 1/\tau_{X,\twist}(s)$ by~\eqref{eq:sd-functional} as 
well as $h(1-s)=1/h(s)$, this equality of multiplicities then extends 
immediately to the right half plane~$\{\Re s > 1/2\}$. We now pick $\eps > 0$ such 
that the ball of radius $\eps$ around $s$ contains no zeros of the Weierstrass 
products $\ProdTwist$ and $\ProdModel$ except at $s$. Using the argument 
principle, it remains to show that
\begin{equation}\label{eq:argument-principle}
\begin{aligned}
\frac{1}{2\pi i} \int_{\gamma_{s,\eps}} \frac{\tau_{X,\twist}'(t)}{\tau_{X,\twist}(t)} \, dt &= m_{X,\twist}(1-s) - m_{X,\twist}(s)
\\
&\phantom{=} + m_{X_f,\twist}(s) - m_{X_f,\twist}(1-s)\,.
\end{aligned}
\end{equation}
Taking advantage of \eqref{def:scattering_determinant} and 
\eqref{eq:definition_of_m} we can write the left hand side of 
\eqref{eq:argument-principle} as
\begin{align*}
\frac{1}{2\pi i} \int_{\gamma_{s,\eps}} \frac{\tau_{X,\twist}'(t)}{\tau_{X,\twist}(t)} \, dt &= \frac{1}{2\pi i} \int_{\gamma_{s,\eps}} \frac{(\det S_{X,\twist}^\rel(t))'}{\det S_{X,\twist}^\rel(t)} \, dt \\ 
&= \frac{1}{2\pi i} \Tr \int_{\gamma_{s,\eps}}  (S_{X,\twist}^\rel(t))^{-1}  ( 
S_{X,\twist}^\rel(t))'\, dt 
\\ 
& = M_s(S_{X,\twist}^\rel).
\end{align*}
We define the normalized model scattering matrix by
\begin{align*}
\tilde{S}_{X_f,\twist}(s) \coloneqq G(s) \Lambda(s) S_{X_f,\twist}(s) \Lambda(1-s)^{-1} G(1-s)^{-1}
\end{align*}
and obtain, using~\eqref{def:relative_scat_matrix}, that
\begin{align*}
S_{X,\twist}^\rel(s) = G(1-s)^{-1} \Lambda(s) \tilde{S}_{X_f,\twist}(s)^{-1} \tilde{S}_{X,\twist}(s) \Lambda(s)^{-1} G(1-s)\,.
\end{align*}
We recall that $G(1-s)$ and $\Lambda(s)$ are holomorphic for $\Re s < 1/2$.
By \eqref{eq:M-cyclic} we have that
\begin{align*}
M_s(S_{X,\twist}^\rel) = M_s(\tilde{S}_{X,\twist}) - M_s(\tilde{S}_{X_f,\twist})\,.
\end{align*}
Proposition~\ref{prop:sp-res} implies 
\begin{align*}
M_s(\tilde{S}_{X,\twist}) = \nu_{X,\twist}(s) = m_{X,\twist}(1-s) - m_{X,\twist}(s)\,.
\end{align*}
Now $m_{X_f,\twist}(1-s) = 0$ since  $\Re s < 1/2$. We note that the equality 
$M_s(\tilde{S}_{X_f,\twist}) = -m_{X_f,\twist}(s)$ follows directly from
Proposition~\ref{prop:Fell_spole_resonance}, that completes the proof.
\end{proof}

To prove Theorem~\ref{thm:scattering-determinant} we have to show that $q$ is a polynomial of degree at most $4$.
For this, we need a singular value estimate on the relative scattering matrix.
This will give us an estimate on the scattering determinant.
We define the set
\begin{align*}
\mc{L}^0_D \coloneqq \{ s \in \C \setmid D(s) = 0\}\,,
\end{align*}
where $D(s)$, as in \cite[Lemma 6.1]{DFP}, is defined by 
\begin{align*}
D(s) \coloneqq  \det(1 - (L(s) \eta_3)^3)\,. 
\end{align*}
For $\delta>0$ set
\begin{align}\label{eq:B_coll}
\mc B(\delta) \coloneqq B_1(\tfrac12) \cup \bigcup_{\zeta \in \mc{L}^0_D \cup \ResSet_{X_f,\twist} \cup (1- \ResSet_{X_f,\twist})} B_{\ang{\zeta}^{-(2+\delta)}}(\zeta)\,,
\end{align}
where $B_r(z)$ denotes the ball of radius $r$ around $z$.

\begin{lemma}\label{lem:svalues_rel_smatrix}
    For $\delta>0$ large enough there exists $C > 0$ and $c > 0$ such that for $s \not \in \mc B(\delta)$ and $k\in\N$, we have 
    \begin{align*}
        \mu_k(S_{X,\twist}^\rel(s)-\id)) \leq e^{C\ang{s}^{2+\eps}- c k}\,.
    \end{align*}
\end{lemma}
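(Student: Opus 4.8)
The plan is to reduce the claimed singular value estimate for $S_{X,\twist}^\rel(s) - \id$ to the singular value estimates already established for the building blocks, namely the Poisson operator estimate~\eqref{eq:svalues-poisson}, the funnel scattering matrix estimate~\eqref{eq:svalues-smatrix}, and the resolvent parametrix error. The starting point is the structural identity from Proposition~\ref{prop:smatrix-structure},
\[
S_{X,\twist}(s) = \bigl(S_{X_f,\twist}(s) \oplus 0\bigr) + (2s-1) Q^\#(s)\,,
\]
so that, after multiplying by $S_{X_f,\twist}(s)^{-1} \oplus (-\id)$ as in~\eqref{def:relative_scat_matrix}, one gets
\[
S_{X,\twist}^\rel(s) - \id = (2s-1)\bigl(S_{X_f,\twist}(s)^{-1}\oplus(-\id)\bigr) Q^\#(s)\,,
\]
possibly up to the finite-rank cusp block which is handled separately and contributes only finitely many nonzero singular values. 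Thus it suffices to estimate the singular values of $Q^\#(s)$ together with the operator norm of $S_{X_f,\twist}(s)^{-1}$ (away from $\mc B(\delta)$, where the parametrix $(\id - L(s)\eta_3)^{-1}$ exists and $S_{X_f,\twist}(s)$ is boundedly invertible).

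The second step is to unwind the definition of $Q^\#(s)$ from Lemma~\ref{lem:Qhash}: each matrix block is a composition of the form
\[
E_{X_\bullet,\twist}^T(s)\,(\eta_3 - \eta_{\bullet,1})\,(\id - L(s)\eta_3)^{-1}\,[\Delta_{X,\twist}, \eta_{\bullet',0}]\,E_{X_{\bullet'},\twist}(s)\,,
\]
with $\bullet,\bullet' \in \{f,c\}$. Here the outer factors $E_{X_\bullet,\twist}(s)$ and $E_{X_\bullet,\twist}(s)^T$ are (transposes of) Poisson operators of the model ends multiplied by compactly supported cutoffs, so by~\eqref{eq:svalues-poisson} their singular values decay like $e^{C\ang{s} - cj}$; the commutator $[\Delta_{X,\twist},\eta_{\bullet',0}]$ is a compactly supported first-order differential operator contributing at most a polynomial-in-$\ang{s}$ factor; and $(\id - L(s)\eta_3)^{-1}$ is bounded by $e^{C\ang{s}}$ away from $\mc B(\delta)$ by the standard parametrix estimates of \cite[Section~5--6]{DFP} (this is exactly why one excises the balls around $\mc{L}^0_D$ and the funnel resonances and their reflections, and the unit ball around $1/2$). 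Using the multiplicativity of singular values under composition with bounded operators, $\mu_k(AB) \le \|A\| \mu_k(B)$ and $\mu_{j+k-1}(AB)\le \mu_j(A)\mu_k(B)$, the exponential decay coming from the Poisson operator factor survives the composition and yields $\mu_k(Q^\#(s)) \le e^{C\ang{s} - ck}$; absorbing the remaining $|2s-1|\,\|S_{X_f,\twist}(s)^{-1}\|$ and the polynomial factors into $e^{C\ang{s}^{2+\delta}}$ (which is harmless since $2+\delta > 1$) gives the claim.

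The main obstacle is the uniform control of $(\id - L(s)\eta_3)^{-1}$ and of $\|S_{X_f,\twist}(s)^{-1}\|$ as $s$ ranges over $\C \setminus \mc B(\delta)$: one must verify that, outside the excised balls whose radii shrink only polynomially ($\ang{\zeta}^{-(2+\delta)}$), the inverse parametrix and the inverse model scattering matrix grow at most like $e^{C\ang{s}^{2+\delta}}$. This is where the choice of $\delta$ large enough enters: the determinant $D(s) = \det(1 - (L(s)\eta_3)^3)$ is entire of order at most $2$, so by a standard minimum-modulus argument (Cartan's estimate, cf.\ the arguments in \cite[Section~6]{DFP}) one has $|D(s)|^{-1} \le e^{C\ang{s}^{2+\delta}}$ off the union of balls $B_{\ang{\zeta}^{-(2+\delta)}}(\zeta)$ around its zeros, and $(\id - L(s)\eta_3)^{-1}$ then inherits a bound of the same form via Cramer's rule together with the Gohberg--Sigal/Fredholm structure; a completely analogous estimate controls $S_{X_f,\twist}(s)^{-1}$ using~\eqref{eq:smatrix-funnel} and the explicit Gamma-function poles. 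Once these two growth bounds are in hand, the rest of the argument is a routine bookkeeping of singular value inequalities, and one reads off the stated estimate $\mu_k(S_{X,\twist}^\rel(s) - \id) \le e^{C\ang{s}^{2+\delta} - ck}$.
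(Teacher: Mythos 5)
Your decomposition $S_{X,\twist}^\rel(s) - \id = \bigl(0 \oplus (-\id)\bigr) + (2s-1)\bigl(S_{X_f,\twist}(s)^{-1}\oplus(-\id)\bigr) Q^\#(s)$ is correct and is the same starting point used in the paper, and you correctly note that the $0 \oplus (-\id)$ summand is finite rank. However, the proposed way of estimating the remaining term has a genuine gap: you treat $S_{X_f,\twist}(s)^{-1}$ as an operator whose operator norm can be bounded off $\mc B(\delta)$ (you write ``$S_{X_f,\twist}(s)$ is boundedly invertible'' and propose a minimum-modulus bound ``completely analogous'' to that for $(\id - L(s)\eta_3)^{-1}$). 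This fails: by \eqref{eq:smatrix-funnel-psido}, $S_{X_f,\twist}(s) \in \Psi^{2\Re s - 1}$, so for $\Re s < 1/2$ it has strictly negative order and its inverse $S_{X_f,\twist}(1-s) \in \Psi^{1-2\Re s}$ has strictly positive order; it is therefore an \emph{unbounded} operator on $L^2(\pa_f X,\bundle)$, not merely a large one, and excising polynomially small balls from $\C$ cannot restore a finite operator norm. Consequently the factorization $\mu_k(AB)\leq\|A\|\mu_k(B)$ with $A = S_{X_f,\twist}(s)^{-1}$ is not available for $\Re s < 1/2$, which is precisely the regime in which the funnel block needs work.

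The paper sidesteps this by never treating $S_{X_f,\twist}(s)^{-1}$ as a separate bounded factor. Instead, by \eqref{eq:smatrix-funnel-intertwine} (together with $S_{X_f,\twist}(s)^T = S_{X_f,\twist}(s)$) one has $S_{X_f,\twist}(s)^{-1}E_{X_f,\twist}(s)^T = -E_{X_f,\twist}(1-s)^T$, so the inverse model scattering matrix is absorbed into the adjacent Poisson factor of $Q^\#(s)$, reflecting $s\mapsto 1-s$ there. This gives, e.g.,
\begin{align*}
S_\rel^{\ff}(s) - \id &= -(2s-1)\, E_{X_f,\twist}(1-s)^T(\eta_3-\eta_1)(\id - L(s)\eta_3)^{-1}[\LapTwist,\eta_0]E_{X_f,\twist}(s)\,,
\end{align*}
in which both Poisson operators are tractable: $E_{X_f,\twist}(1-s)$ because $\Re(1-s) > 1/2$ when $\Re s < 1/2$ (so \eqref{eq:svalues-poisson} applies), and $E_{X_f,\twist}(s)$ by a second application of the intertwining relation, $E_{X_f,\twist}(s) = -E_{X_f,\twist}(1-s)\,S_{X_f,\twist}(s)$, combined with the singular-value estimate \eqref{eq:svalues-smatrix}. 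This is also where your blanket appeal to \eqref{eq:svalues-poisson} breaks down: that estimate holds only for $\Re s > \eps$, so it cannot be applied to $E_{X_f,\twist}(s)$ directly in the left half-plane. And it explains the true role of excising the balls around $\ResSet_{X_f,\twist}$ and $1-\ResSet_{X_f,\twist}$ in $\mc B(\delta)$: not to make $\|S_{X_f,\twist}(s)^{-1}\|$ finite (it is not), but to keep the factor $d_k(s)$ in \eqref{eq:svalues-smatrix} polynomially bounded in $\ang s$. The rest of your argument --- handling the cusp blocks as finite rank, bounding $(\id - L(s)\eta_3)^{-1}$ via the zeros of $D(s)$ as in \eqref{eq:bound-Leta}, and reading off the exponential decay in $k$ --- matches the paper.
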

\begin{proof}
    By Proposition~\ref{prop:smatrix-structure}, we have the decomposition
    \begin{align*}
    S_{X,\twist}(s) = S_{X_f,\twist}(s) \oplus 0 + (2s - 1) Q^\#(s)\,.
    \end{align*}
    From \eqref{eq:smatrix-structure} and \eqref{eq:Qhash}, we have that
    \begin{align*}
        S_{X,\twist}(s) &= \begin{pmatrix} S_{X_f,\twist}(s) + Q^\#(s)^{\ff} & S_{X_f,\twist}(s) + Q^\#(s)^{\fc} \\ Q^\#(s)^{\cf} & Q^\#(s)^{\cc} \end{pmatrix}
    \end{align*}
    Hence, the matrix coefficients of $S_{X,\twist}^\rel(s)$ are given by
    \begin{align*}
        S_{X,\twist}^\rel(s) &= \begin{pmatrix} S_\rel^{\ff}(s) & S_\rel^{fc}(s) \\ S_\rel^{\cf}(s) & S_\rel^{\cc}(s) \end{pmatrix} \\
            &= \begin{pmatrix} S_{X_f,\twist}(s)^{-1} & \\ & -\id \end{pmatrix} \begin{pmatrix} S_{X_f,\twist}(s) + Q^\#(s)^{\ff} & Q^\#(s)^{\fc} \\ Q^\#(s)^{\cf} & Q^\#(s)^{\cc} \end{pmatrix} \\
            &= \begin{pmatrix} \id + S_{X_f,\twist}(s)^{-1} Q^\#(s)^{\ff} & S_{X_f,\twist}(s)^{-1} Q^\#(s)^{\fc} \\ -Q^\#(s)^{\cf} & -Q^\#(s)^{\cc} \end{pmatrix}\,.
    \end{align*}
    By \eqref{eq:smatrix-funnel-intertwine}, we have that $S_{X_f,\twist}(s)^{-1} E_{X_f,\twist}(s)^T = - E_{X_f,\twist}(1-s)^T$ and together with \eqref{eq:Qhash}, we obtain
    \begin{align*}
        S_\rel^{\ff}(s) &= \id - (2s - 1)E_{X_f,\twist}(1-s)^T (\eta_3 - \eta_1) \\
        &\phantom{= \id - } \times (\id - L(s) \eta_3)^{-1} [\LapTwist, \eta_0] E_{X_f,\twist}(s)\,,
    \\
    S_\rel^{\fc}(s) &= -(2s-1)E_{X_f,\twist}(1-s)^T (\eta_3 - \eta_1) \\
    &\phantom{= -} \times (\id - L(s) \eta_3)^{-1} [\LapTwist, \eta_0] E_{X_c,\twist}(s)\,,
    \\
    S_\rel^{\cf}(s) &= -(2s-1)E_{X_c,\twist}(s)^T (\eta_3 - \eta_1) (\id - L(s) \eta_3)^{-1} [\LapTwist, \eta_0] E_{X_f,\twist}(s)\,,
    \\
    S_\rel^{\cc}(s) &= -(2s-1)E_{X_c,\twist}(s)^T (\eta_3 - \eta_1) (\id - L(s) \eta_3)^{-1} [\LapTwist, \eta_0] E_{X_c,\twist}(s)\,.
    \end{align*}
    From \eqref{eq:poisson-model-cusp} we obtain for every compactly supported $A \in \Diff^1(X_c, \bundle|_{X_c})$ the bound
    \begin{align}\label{eq:bound-poisson-cusp}
        \norm{A E_{X_c, \twist}(s)} \leq e^{C\ang{s}}
    \end{align}
    for $s \not \in B_1(1/2)$.

    Without loss of generality, we suppose that $X_f$ is a single funnel, that is contained in the hyperbolic cylinder $C_\ell = \ang{h_\ell} \bs \h$.
    If $\Re s > \eps > 0$, we can directly use \eqref{eq:svalues-poisson} to estimate the singular values of $A E_{X_f,\twist}(s)$, where $A \in \Diff^1(X_f,\bundle|_{X_f})$ is compactly supported.
    For $\Re s < 1/2 - \eps$, we use \eqref{eq:svalues-smatrix} and \eqref{eq:svalues-poisson} together with \eqref{eq:smatrix-funnel-intertwine}, $E_{X_f,\twist}(s) S_{X_f,\twist}(s)^{-1} = -E_{X_f,\twist}(1-s)$.
    Hence, for all $s \in \C$, we obtain the estimate
    \begin{align}\label{eq:bound-poisson-funnel}
        \mu_k(A E_{X_f,\twist}(s)) \leq
        \begin{cases}
        d_{k}(s) e^{C \ang{s} \log(s)}, & k \leq \max\{m_0,2m_{\vartheta_j}\}\,, \\
        \left(\frac{\ang{s}}{k}\right)^{2\ang{s}} e^{C\ang{s} - ck}, & k > \max\{m_0,2m_{\vartheta_j}\}\,,
        \end{cases}
    \end{align}
    where $m_\vartheta$ denotes the multiplicity of the eigenvalue $\lambda = 
e^{2\pi i\vartheta}$ of $\twist(h_\ell)$, the function $d_k(s)$ was defined by 
\eqref{eq:def_dks}, and $A \in \Diff^1(X_f, \bundle)$ is compactly supported.
    We note that for every $\delta > 0$ and $s \not \in \mc B(\delta)$, we have that
    \begin{align*}
        d_0(s) \lesssim \ang{s}^{2 + \delta}
    \end{align*}
    due to the fact that there are only finitely many resonances in a ball of 
radius~$1$ around $s$.

    The estimate on the determinant $D(s)$ in \cite[Section~6]{DFP} implies---as 
in the untwisted case (see~\cite[Lemma~3.6]{GuZw97})---that for $\delta > 0$ 
large enough and any
    \[
    s \not \in \bigcup_{\zeta \in \mc{L}^0_D \cup \ResSet_{X_f,\twist}} B_{\ang{\zeta}^{-(2+\delta)}}(\zeta)\,,
    \]
    the following estimate holds for all $\eps > 0$:
    \begin{align}\label{eq:bound-Leta}
    \norm{(\id - L(s) \eta_3)^{-1}}_{L^2(X,\bundle) \to L^2(X,\bundle)} \leq e^{C\ang{s}^{2+\eps}}\,.
    \end{align}
    Using \eqref{eq:bound-poisson-cusp}, \eqref{eq:bound-poisson-funnel}, and 
\eqref{eq:bound-Leta} we obtain 
    \begin{align*}
        \mu_k(S_{X,\twist}^\rel(s)^\bullet) \leq e^{C \ang{s}^{2 + \eps}}\,, \qquad \bullet \in \{\fc, \cf, \cc\}\,,
    \end{align*}
    for $s \not \in \mc B(\delta)$, where we have used that all matrix components involving cusp terms are finite rank operators.
    So in particular, $\mu_k(S_{X,\twist}^\rel(s)^\bullet) = 0$ for $k > N$ for some $N \in \N$.

    For the funnel term, we estimate
    \begin{align*}
        \mu_k( &S_{X,\twist}^\rel(s)^{\ff} - \id ) \\
        &\leq \norm{ (\eta_3 - \eta_1) E_{X_f,\twist}(1-s) } \, \norm{ (\id - L(s) \eta_3)^{-1} } \, \mu_k( [\LapTwist, \eta_0] E_{X_f,\twist}(s) ) \\
        &\leq d_0(1-s) e^{C \ang{1-s} \log(1-s)} \, e^{C\ang{s}^{2+\eps}} \, \mu_k( [\LapTwist, \eta_0] E_{X_f,\twist}(s) ) \,.
    \end{align*}
    From the remark above, we obtain that for $s \not \in \mc B(\delta)$,
    \begin{align*}
        \mu_k( S_{X,\twist}^\rel(s)^{\ff} - \id ) \leq e^{C\ang{s}^{2+\eps}} \, \mu_k( [\LapTwist, \eta_0] E_{X_f,\twist}(s) ) \,.
    \end{align*}
    If $k \leq \max\{m_0, 2m_\varphi\}$, then can apply the same argument to obtain that
    \begin{align*}
        \mu_k( S_{X,\twist}^\rel(s)^{\ff} - \id ) \leq e^{C\ang{s}^{2+\eps}}\,.
    \end{align*}
    For $k \geq \max\{m_0, 2m_\varphi\}$, we have that
    \begin{align*}
        \mu_k( S_{X,\twist}^\rel(s)^{\ff} - \id ) &\leq e^{C\ang{s}^{2+\eps}} \left( \frac{\ang{s}}{k}\right)^{2 \ang{s}} e^{C\ang{s} - ck} \\
        &\leq e^{C\ang{s}^{2+\eps} - ck}\,.
    \end{align*}
\end{proof}

With all these results at our disposal, the proof of Theorem~\ref{thm:scattering-determinant} is analogous to the corresponding statement in the untwisted setting. For the convenience of the reader, we provide the details.

\begin{proof}[Proof of Theorem~\ref{thm:scattering-determinant}]
    In Proposition~\ref{prop:tau-factorization} we established the factorization
    \begin{align}\label{eq:tau-fact}
        \tau_{X,\twist}(s)\cdot\frac{\ProdTwist(s) \ProdModel(1-s) } {\ProdTwist(1-s) \ProdModel(s) } = e^{q(s)}
    \end{align}
    with $q$ being an entire function.
    It remains to show that $q$ is polynomial with degree bounded by~$4$,
    for which we will take advantage of the Hadamard factorization theorem \cite[8.24]{Titchmarsh_book}.
    To that end we let $\varphi\colon\C\to\C$, 
    \[
     \varphi(s) \coloneqq \tau_{X,\twist}(s)\cdot\frac{\ProdTwist(s) \ProdModel(1-s) } {\ProdTwist(1-s) \ProdModel(s) }\,,
    \]
    denote the function on the left hand side of the equation in~\eqref{eq:tau-fact} and note that $\varphi$ is entire and has no zeros (as $q$ is entire).
    Therefore $e^{q(\cdot)}$ is the (full) Hadamard factorization of~$\varphi$, and hence $q$ is polynomial.
    In order to estimate the degree of~$q$, we now provide a numerical bound on the order of~$\varphi$.

    Let $\delta > 0$ be as in Lemma~\ref{lem:svalues_rel_smatrix} and set $\mc B \coloneqq \mc B(\delta)$,
    where $\mc B(\delta)$ is defined in~\eqref{eq:B_coll}.
    We recall that $\mc B$ encloses all zeros of~$\ProdTwist$ and~$\ProdModel$.
    By \cite[Theorem 2.6.5]{Boas_entire} and the upper bounds on the resonances, \cite[Remark 4.13]{DFP} and \cite[Theorem B]{DFP},
    we see that both Weierstrass products~$\ProdTwist$ and~$\ProdModel$ are of order~$2$.
    In combination with the minimum modulus theorem \cite[8.71]{Titchmarsh_book}
       we obtain that for all~$\eps>0$ we have 
    \[
        \log \abs*{\frac{\ProdTwist(s) \ProdModel(1-s) } {\ProdTwist(1-s) 
        \ProdModel(s) } }  \lesssim_\eps \ang{s}^{2+\eps} \qquad\text{for all $s \not\in 
        \mc B$}\,.
    \]
    We may estimate the scattering determinant~$\tau_{X,\twist}$ using 
    \cite[IV.1.2]{GohbergKrein69} and Lemma~\ref{lem:svalues_rel_smatrix} to obtain, 
    for all $s\notin\mc B$,
    \begin{align*}
    \abs{\tau_{X,\twist}(s)} & = \abs{\det ( \id + (S_{X,\twist}^\rel(s)-\id)} 
    \\
    & \leq \prod_{k=1}^\infty \left(1 + \mu_k(S_{X,\twist}^\rel(s)-\id)\right)
    \\
    & \leq \prod_{k=1}^\infty \left( 1 + e^{C\ang{s}^{2+\eps} - ck} \right)
    \end{align*}
    for all~$\eps>0$ and suitable $c, C >0$ (possibly depending on~$\eps$).
    Choose $N(s) \in \N$ such that $c N(s) < C \ang{s}^{2 + \eps} < c (N(s) + 
1)$.
    We have that
    \begin{align*}
        \log \abs{\tau_{X,\twist}(s)} &\leq \sum_{k=1}^\infty \log \left( 1 + e^{C \ang{s}^{2+\eps} - ck} \right) \\
        &=\sum_{k = 1}^{N(s)} \log \left( 1 + e^{C\ang{s}^{2+\eps} - ck} \right) + \sum_{k=N(s)+1}^\infty \log \left( 1 + e^{C\ang{s}^{2+\eps} - ck} \right) \\
        &\lesssim_\eps N(s) \ang{s}^{2 + \eps} + \sum_{j=0}^\infty \log \left( 1 + e^{-c j} e^{C\ang{s}^{2+\eps} - c(N(s) + 1)} \right) \\
        &\lesssim_\eps \ang{s}^{4 + 2\eps}
    \end{align*}
    Therefore, for every~$\eps>0$ and $s \not \in \mc B$, we obtain $C>0$ such 
that 
    \begin{align}\label{eq:bound_on_B}
        \log \abs{\varphi(s)} \lesssim_\eps \ang{s}^{4+\eps}\,.
    \end{align}
        By \cite[Theorem B and Proposition 6.2]{DFP}, we have that
    \begin{align*}
        \#\{ \zeta \in \mc{L}^0_D \cup \ResSet_{X_f,\twist} \cup (1- \ResSet_{X_f,\twist}) \colon \abs{\zeta} \in (r-1, r)\} \lesssim r^2
    \end{align*}
    for any $r > 1$. Hence, we can estimate the area of $\mc B$ restricted to the annulus $\{r-1 < \abs{z} < r\}$ by
    \begin{align*}
        \vol( \mc B \cap \{ z \in \C \colon \abs{z} \in (r-1,r)\} ) &\lesssim_\eps r^2 \ang{r-1}^{-2 (\delta + 2)} \\
        &= O(\ang{r}^{-2 \delta - 2})\,, \quad \text{ as } r \to \infty\,.
    \end{align*}
        Hence, taking $R > 1$ large enough, for any $r > R$ and $s \in \C$ with 
$\abs{s} \leq r$, we have the estimate
    \begin{align*}
        \log \abs{\varphi(s)} \lesssim_\eps \ang{r}^{4+\eps}
    \end{align*}
    by the maximum modulus principle (see for instance \cite[5.1]{Titchmarsh_book}).
    Thus, $\varphi$ is of order~$4$ and hence $q$ is a polynomial of degree at 
most~$4$.
\end{proof}

\appendix

\bibliography{ap_bib} 
\bibliographystyle{amsalpha}

\setlength{\parindent}{0pt}

\end{document}